% ----------------------------------------------------------------
% AMS-LaTeX Paper ************************************************
% **** -----------------------------------------------------------
\documentclass[twoside,11pt]{amsart}
\usepackage[all]{xy}
\CompileMatrices
\usepackage{amsfonts}
\usepackage{amssymb}
\usepackage{amsthm}
\usepackage{amsmath}
\usepackage{ifthen}
\usepackage{url}
\usepackage[backref]{hyperref}
\usepackage[usenames]{color}
\usepackage{comment}
\usepackage{color}
\usepackage{framed}
\definecolor{shadecolor}{gray}{0.875}
\specialcomment{personal}{\begin{shaded}}{\end{shaded}}
\excludecomment{personal}

%\usepackage[backref]{hyperref}
% ----------------------------------------------------------------
\vfuzz2pt % Don't report over-full v-boxes if over-edge is small
\hfuzz2pt % Don't report over-full h-boxes if over-edge is small
% THEOREMS -------------------------------------------------------
\setlength{\textwidth}{14,5cm} \setlength{\textheight}{21,5cm}
\setlength{\marginparsep}{0.25cm}
\setlength{\parindent}{0cm} \setlength{\parskip}{1ex}
\setlength{\oddsidemargin}{2.5mm}
\setlength{\evensidemargin}{\oddsidemargin}
\addtolength{\evensidemargin}{1cm}
\setcounter{tocdepth}{3}
%\special{papersize=21cm,29.7cm}

%Declaration section

 \newtheorem{thm}{Theorem}[section]
 \newtheorem{prop}[thm]{Proposition}
 
 \newtheorem{lem}[thm]{Lemma}

\newtheorem{rem}[thm]{Remark}

\theoremstyle{definition}
\newtheorem{defn}[thm]{Definition}

\newtheorem{ex}[thm]{Example}

\theoremstyle{remark}

% MATH -----------------------------------------------------------
\font\russ=wncyr10  1
\def\sha{\hbox{\russ\char88}}

\renewcommand{\1}{1\hspace{-.08cm}\mathrm{I}}
\renewcommand{\L}{\ifmmode {\mathcal{L}}\else$\mathcal{L}$\ \fi}

\renewcommand{\d}{\mathbf{d}}
\renewcommand{\u}{\mathbf{1}}
\newcommand{\id}{\mathrm{id}}
\newcommand{\bbC}{\ifmmode {\mathbb{C}}\else$\mathbb{C}$\ \fi}
\newcommand{\bbR}{\ifmmode {\mathbb{R}}\else$\mathbb{R}$\ \fi}

\renewcommand{\r}{\mathrm{R}\Gamma}

\newcommand{\be}{\begin{equation}}
\newcommand{\ee}{\end{equation}}

\newcommand{\qnr}{ \widehat{\mathbb{Q}_p^{ur}}}
\newcommand{\zpnr}{ \widehat{\mathbb{Z}_p^{ur}}}
\newcommand{\fpbar}{\ifmmode {\overline{\mathbb{F}_p}}\else$\mathbb{F}_p$\ \fi}

\newcommand{\fp}{\ifmmode {\mathbb{F}_p}\else$\mathbb{F}_p$\ \fi}
\newcommand{\zp}{\ifmmode \mathbb{Z}_p\else$\mathbb{Z}_p$\ \fi}
\newcommand{\zpur}{\ifmmode \widehat{\zp^{ur}}\else $\widehat{\zp^{ur}}$\ \fi}
\newcommand{\Tun}{\ifmmode \mathbb{T}_{un}\else$\mathbb{T}_{un}$\ \fi}

\newcommand{\z}{\mathbb{Z}}
\newcommand{\Z}{\mathbb{Z}}
\newcommand{\zpMod}{\ifmmode\mbox{$\zp$-Mod}\else$\zp$-Mod \fi}
\newcommand{\Mod}{\ifmmode\mbox{$\Lambda$-Mod}\else$\Lambda$-Mod \fi}
\renewcommand{\mod}{\ifmmode\mbox{$\Lambda$-mod}\else$\Lambda$-mod
\fi}
\newcommand{\La}{\ifmmode\Lambda\else$\Lambda$\fi}

\newcommand{\Hom}{{\mathrm{Hom}}}
\newcommand{\Ext}{{\mathrm{Ext}}}

\renewcommand{\H}{\mathrm{H}}

\newcommand{\M}{\ifmmode {\frak M}\else${\frak M}$ \fi}
\newcommand{\m}{\ifmmode {\frak m}\else$\frak m$ \fi}
\newcommand{\mh}{\ifmmode {\frak m}(H)\else${\frak m}(H)$ \fi}
\newcommand{\p}{\ifmmode {\frak p}\else${\frak p}$\ \fi}
\renewcommand{\P}{\ifmmode {\frak P}\else${\frak P}$\ \fi}
\newcommand{\e}{\ifmmode {\mathcal{E}}\else$\mathcal{E}$ \fi}

\newcommand{\C}{\mathcal{C}}

\newcommand{\T}{\mathbb{ T}}
\renewcommand{\O}{\mathcal{ O}}
\newcommand{\G}{\ifmmode {\mathcal{G}}\else${\mathcal{G}}$\ \fi}
\newcommand{\A}{\ifmmode {\mathcal{A}}\else${\mathcal{ A}}$\ \fi}
 %"local A"=inverse linit of multiplicative local group
\newcommand{\lu}{\mathbb{U}} %"local U"=inverse linit of  local unit group
  %global local A
  %global local A
  %global local U
 %global S-units
 %global units

\renewcommand{\projlim}[1] {{\lim\limits_{\stackrel{\displaystyle
\longleftarrow}{#1}}}}

%\renewcommand{\in}{\ \epsilon\ }
    %Aufz"hlungen mit i

\newcommand{\kl}{[\![}
\newcommand{\kr}{]\!]}
%---------------------------------------------------
%von Yoshi
\newcommand{\Qp}{\ifmmode {{\Bbb Q}_p}\else${\Bbb Q}_p$\ \fi}
\newcommand{\qp}{\ifmmode {\Bbb Q}_p\else${\Bbb Q}_p$\ \fi}
\newcommand{\ql}{\ifmmode {{\Bbb Q}_l}\else${\Bbb Q}_l$\ \fi}
\newcommand{\Q}{\ifmmode {\Bbb Q}\else${\Bbb Q}$\ \fi}
\newcommand{\q}{\ifmmode {\Bbb Q}\else${\Bbb Q}$\ \fi}

\newcommand{\Ind}{\mathrm{Ind}}

%\newcommand{\Sel}{\mbox{$\mbox{\rm Sel}_S$}}

%\newcommand{\csel}{\mbox{$ \widehat{Sel}_S$}}

%alte Musterdefinitionen

\newcommand{\Co}{\mbox{$\Bbb C$}}

\pagestyle{myheadings}

\def\sectionnam{\@empty}
\def\subsectionnam{\@empty}
%\newcommand{\Section}{\section}
%\renewcommand{\section}[1]{\def\sectionname{#1}\Section{#1}}

% ----------------------------------------------------------------
\begin{document}

%\title[Noncommutative Local MC for CM-elliptic curves]{On the non-commutative Local Main Conjecture for elliptic curves with complex multiplication  }%

\title[On Kato's local $\epsilon$-isomorphism Conjecture]{On Kato's local $\epsilon$-isomorphism Conjecture for rank one Iwasawa modules}%

\author{Otmar Venjakob}%
\address{Universit\"{a}t Heidelberg\\ Mathematisches Institut\\
Im Neuenheimer Feld 288\\ 69120 Heidelberg, Germany.} \email{venjakob@mathi.uni-heidelberg.de}
\urladdr{http://www.mathi.uni-heidelberg.de/\textasciitilde otmar/}
\thanks{I acknowledge support  by the ERC and DFG}

\subjclass[2000]{11R23, 11F80, 11R42, 11S40, 11G07, 11G15}

%\keywords{ }%

\date{\today}%
%\dedicatory{after Burns/Flach, Fukaya/Kato, Huber/Kings, Coates, Sujatha ...}%
%\commby{}%4
% --------------------
\maketitle
\thispagestyle{empty}

\begin{abstract}
 This paper contains a complete proof of Fukaya's and Kato's $\epsilon$-iso\-morphism conjecture  in \cite{fukaya-kato} for invertible $\Lambda$-modules (the case of $V = V_0(r)$ where $V_0$ is
unramified of dimension $1$). Our results rely heavily on Kato's unpublished proof of (commutative) $\epsilon$-isomorphisms for one
dimensional representations of $G_{\qp}$ in \cite{kato-lnmII}, but apart from fixing some sign-ambiguities in (loc.\ cit.)  we use the theory of $(\phi,\Gamma)$-modules instead of syntomic cohomology. Also, for the convenience of the reader we give a slight modification or rather reformulation of it in the language of \cite{fukaya-kato} and extend it   to the (slightly non-commutative) semi-global setting. Finally we discuss some direct applications  concerning the Iwasawa theory of CM elliptic curves, in particular  the local Iwasawa   Main
Conjecture  for CM elliptic curves $E$ over the   extension of $\qp$ which trivialises the $p$-power division points $E(p)$ of $E$. In this sense the paper is   complimentary to the joint work \cite{bou-ven} on noncommutative Main Conjectures for CM elliptic curves.
\end{abstract}

\section{Introduction}

The significance of (local) $\epsilon$-factors \`{a} la Deligne and Tate or more general of the
(conjectural) $\epsilon$-isomorphism suggested by Fukaya and Kato in \cite[\S 3]{fukaya-kato} is at
least twofold: First of all they are important ingredients to obtain a precise functional equation
for $L$-functions or more generally for (conjectural) $\zeta$-isomorphism (loc.~ cit., \S 2) of
motives in the context of equivariant or non-commutative Tamagawa number conjectures, see e.g.\ Theorem \ref{functionalequation}; secondly they
are essential in interpolation formulae of (actual) $p$-adic $L$-functions and for the relation
between $\zeta$-isomorphisms and (conjectural, not necessarily commutative) $p$-adic $L$-functions
as discussed in (loc.~cit., \S 4). Of course the two occurrences are closely related, for a survey
on these ideas see also \cite{ven-BSD}.

Our motivation for writing this article stems from
 one of the main results, theorem 8.4, of  \cite{burns-ven2} (see Theorem \ref{burns-descent})
describing under which conditions the validity of a (non-commutative) Iwasawa main conjecture for a
critical (ordinary at $p$) motive $M$ over some $p$-adic Lie extension $F_\infty$ of $\Q$ implies
parts of the Equivariant Tamagawa Number Conjecture (ETNC) by Burns and Flach for $M$ with respect
to a finite Galois extension $F\subseteq F_\infty$ of $\Q$. Due to the second above mentioned meaning it requires among others the existence of
an $\epsilon$-isomorphism
\begin{equation}
\label{appCM}
 \epsilon_{p,\zp[G(F/\Q)]}(\hat{\mathbb{T}}_F) :
\u_{\zp[G_{F/\Q}]}\to
\d_{\zp[G_{F/\Q}]}(\r(\qp,\hat{\mathbb{T}}_F))\d_{\zp[G_{F/\Q}]}(\hat{\mathbb{T}}_F)\end{equation}
in the sense of \cite[Conj.\ 3.4.3]{fukaya-kato}, where the Iwasawa module $\hat{\mathbb{T}}_F$ is
related to the ordinary condition of $M,$ e.g.\ for an (ordinary) elliptic curve $E$ it arises from
the formal group part of the usual Tate module of $E.$ Unfortunately, very little is known about
the existence of such $\epsilon$-isomorphism in general. To the knowledge of the author it is not
even contained in the literature for   $\hat{\mathbb{T}}_F$ attached to a $CM$-elliptic curve  $E$ and
the trivialising extension $F_\infty:=F(E(p))$, where $E(p)$ denotes group of $p$-power division points of $E$. Well, in principle a rough sketch of a proof is
contained in Kato's work \cite{kato-lnmII}, which unfortunately has never been published sofar. Moreover there were still some sign-ambiguities which we fix in this paper, in particular it turns out that one has to take $-\mathcal{L}_{K,\epsilon^{-1}}$, i.e., $-1$ times the classical Coleman map \eqref{col2}, in the construction of the epsilon isomorphism \eqref{epsTun}.

Recently, Benois and Berger \cite{benois-berger} have proved the conjecture $C_{EP}(L/K,V)$ for
arbitrary crystalline representations $V$ of $G_K$, where $K$ is an unramified extension of $\qp$
and $L$ a finite subextension of $K_\infty=K(\mu(p))$ over $K.$ Although they mention in their
introduction ``Les m\^eme arguments, avec un peu plus de calculs, permettent de d\'{e}montrer la
conjecture $C_{EP}(L/K,V)$ pour toute extension $L/K$ contenue dans $\mathbb{Q}_p^{ab}$. Cette
petite g\'{e}n\'{e}ralisation est importante pour la version \'equivariante des conjectures de Bloch et
Kato'', they leave it as an ``exercise'' to the reader. In the special case $V=\qp(r),$ $r\in\z$,
Burns and Flach \cite{bf4} prove a local ETNC using global ingredients in a semi-local setting, while in the
above example we need it for $V=\qp(\eta)(r)$, where $\eta$ denotes an unramified character. Also
we would like to stress that the existence of the $\epsilon$-isomorphisms \`{a} la Fukaya and Kato is a slightly
finer statement then the $C_{EP}(L/K,V)$-conjecture or the result of Burns and Flach, because the
former one states that a certain family of certain precisely defined units of integral group
algebras of finite groups in a certain tower can be interpolated by a unit in the corresponding
Iwasawa algebra while in the latter ones ``only'' a family  of lattices is ``interpolated'' by one over
the Iwasawa algebra.

The aim of this article, which also might hopefully serve as a survey into the subject, is to
provide detailed and complete arguments for the existence of the $\epsilon$-isomorphism \[\epsilon_\Lambda(\T(T)):\u_{\widetilde{\Lambda}} \to\d_\Lambda(R\Gamma(\qp,\T(T) ))_{\widetilde{\Lambda}}\d_\Lambda(\T(T) )_{\widetilde{\Lambda}}\] where
$\Lambda=\Lambda(G)$ is the Iwasawa algebra of $G=G(K_\infty/\qp)$ for any (possibly infinite)
unramified extension $K$ of $\qp$,   $T=\zp(\eta)(r)$ and $ R\Gamma(\qp,\T(T) )$ denotes the complex calculating local Galois cohomology of $\T(T)$, the usual Iwasawa theoretic deformation of $T$ (see \eqref{defomation}). Furthermore, for an associative ring $R$ with one, $\d_R$ denotes the determinant functor with  $\u_R=\d_R(0)$ (see Appendix \ref{determinants})  while $\widetilde{\Lambda}$ is defined in \eqref{tilde}. We are mainly interested in the case,
where $G\cong \mathbb{Z}_p^2\times\Delta$ for a finite group $\Delta$ - such extensions arise for example by adjoining the $p$-power division points of a CM elliptic curve to the base field as above. This corresponds to a (generalised) conjecture $C_{IW}(K_\infty/\qp)$ (in the notation of Benois and Berger) originally
due to Perrin-Riou. It is the first example of an $\epsilon$-isomorphism associated with a two
dimensional $p$-adic Lie group extension. Following Kato's approach we construct a {\em universal}
$\epsilon$-isomorphism $\epsilon_\Lambda(\T(\zp(1)))$, from which all the others arise by suitable
twists and descent. But while Kato constructs it first over      cyclotomic $\zp$-extensions and then takes limits, here we construct it directly  over $(\mathbb{Z}_p^2\times\Delta)$-extensions (and then take limits). To show that they satisfy the right interpolation property with respect to
Artin(=Dirichlet) characters of $G,$ we use the theory of $(\phi,\Gamma)$-modules and Berger's
explicit formulae in \cite{berger-exp} instead of the much more involved  syntomic cohomology and
Kato's reciprocity laws for formal groups. In contrast to Kato's unpublished preprint, in which
 he uses the language of \'{e}tale sheaves and cohomology, we prefer Galois cohomology as used also in \cite{fukaya-kato}.
In order to work out in detail Kato's reduction argument in \cite{kato-lnmII} to the case of
trivial $\eta$ we have to show a certain twist
 compatibility  of Perrin-Riou's exponential map/Coleman map for $T$ versus $\zp(r)$ over a  trivialising
 extension $K_\infty$ for $\eta$, see Lemma \ref{twistL}. Going over to semi-local settings we obtain the
 first $\epsilon$-isomorphism over a (slightly)
non-commutative ring. In a forthcoming paper \cite{loefflerzerbesven}, using the techniques of
\cite{benois-berger} and \cite{zerbes-loeffler}, we  are going to extend these results to the
case of arbitrary crystalline representations for the same tower of local fields as above. Of
course it would be most desireable to extend the existence of $\epsilon$-isomorphism also to
non-abelian local extensions, but which seems to require completely new ideas and   to be out of reach
at present (see \cite{iz} for some examples). Some evidence in that direction has been provided by
Fukaya (unpublished).

Combined with Yasuda's work \cite{yasuda} concerning $\epsilon$-isomorphism for $l\neq p,$ we also
obtain in principal a purely local proof of the above mentioned result by Burns and Flach for $V=\qp(r)$.

{\em Acknowledgements:} I am grateful to Denis Benois and Laurent Berger for a   kind explanation
of their work in \cite{benois-berger}. Also I would like to thank Matthias Flach and Adebisi
Agboola for helpful discussions.    I am indebted to Dmitriy Izychev and Ulrich Schmitt for pointing out a couple of typos. Finally, I am grateful to the anonymous referee for valuable suggestions which helped to improve  the article.

\section{Kato's proof for one dimensional representations}\label{Katoproof}

Let $p$ be a  prime and $K$ be any  unramified (possibly infinite) Galois extension of $\qp.$
We set $K_n:=K(\mu_{p^n})$ for $0\leq n \leq \infty$ and \[\Gamma=G(\mathbb{Q}_{p,\infty}/\qp)\cong
\mathbb{Z}_p^\times.\] Recall that the maximal unramified extension
$\qp^{ur}$ and the maximal abelian extension $\qp^{ab}$ of $\qp$ are given as $\qp(\mu(p'))$ and
$\qp(\mu)=\qp^{ur}(\mu(p)),$   where $\mu(p)$ and $\mu(p')$ denote the $p$-primary and prime-to-$p$
part of $\mu,$ the group of all roots of unity, respectively. In particular, we have the canonical decomposition
\begin{eqnarray*}
G(\qp^{ab}/\qp)&=&G(\qp^{ur}/\qp)\times G(\mathbb{Q}_{p,\infty}/\qp)\\
&=& \hat{\z}\times \mathbb{Z}_p^\times,
\end{eqnarray*}
under which per definitionem $\tau_p$ corresponds to $(\phi, 1)$ (and by abuse of notation also to
its image in $G$ below), where $\phi:=Frob_p$ denotes the arithmetic Frobenius $x\mapsto x^p.$ We
put
\[H:=H_K:=G(K/\qp)=\overline{<\phi>}\]
and
\[G:=G(K_\infty/\qp)\cong H\times \Gamma.\]

Assume that $G$ is a $p$-adic Lie-group, i.e., $H$ is the product of a finite abelian group of
order prime to $p$ with a (not necessarily strict) quotient of $\zp.$ By
\[\Lambda:=\Lambda(G):=\zp\kl G \kr\]
 we denote as usual the Iwasawa algebra of $G.$ Also we write $\widehat{\zp^{ur}}$ for the ring of
Witt vectors $W(\overline{\fp})$ with its natural action by $\phi$ and we set
\begin{equation}
\label{tilde}
\widetilde{\Lambda}=\Lambda\widehat{\otimes}_{\zp}\zpur =\zpur \kl G \kr.  \end{equation}
By \[\Tun:=\Lambda^\sharp(1)\] we denote the free $\Lambda$-module of rank one with the following
Galois action
\[\chi_{un}:G_{\qp} \to \Lambda^\times,  \;\; \sigma\mapsto [\Tun,\sigma]:=\bar{\sigma}^{-1}\kappa(\sigma),\]
where $\bar{\phantom{m}}: G_{\qp}\twoheadrightarrow G$ is the natural projection map and
$\kappa:G_{\qp}\twoheadrightarrow\zp^\times$ is the $p$-cyclotomic character. Furthermore, we write
\[\lu(K_\infty):=\projlim{L,i} \mathcal{O}_L^\times/p^i\] for the $\Lambda$-module of local units,
where $L$ and $i$ run through the finite subextensions of $K_\infty/\qp$ and the natural numbers,
respectively, and the transition maps are induced by the norm. Finally we fix once and for all a
$\zp$-basis $\epsilon=(\epsilon_n)_n$ of $\zp(1)=\projlim{n}  \mu_{p^n}.$

Setting $\Lambda_a=\{x\in \widetilde{\Lambda}| (1\otimes\phi) (x)= (a\otimes 1)\cdot x\}$ for $a\in\Lambda^\times=
K_1(\Lambda)$ we obtain

\begin{prop}\label{twist}
For $a=[\Tun,\tau_p]^{-1}=\tau_p$ there is a canonical isomorphism \[  \Lambda_a \cong \left\{
        \begin{array}{ll}
          \O_K\kl \Gamma\kr, & \hbox{if $H$ is finite;} \\
          \projlim{\qp\subseteq K'\subseteq K \hbox{\footnotesize finite, }Tr} \O_{K'}\kl \Gamma \kr , & \hbox{if $H$ is infinite.}
        \end{array}
      \right.
\] as $\Lambda$-modules and all modules are free of rank one.
\end{prop}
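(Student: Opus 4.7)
I first treat $H$ finite of order $m$, so that $K$ is the unramified extension of $\qp$ of degree $m$ and $\widetilde\Lambda = \zpur[H]\kl\Gamma\kr$. Any $x \in \widetilde\Lambda$ has a unique expansion $x = \sum_{h\in H,\,\gamma\in\Gamma} c_{h,\gamma}(h,\gamma)$ with $c_{h,\gamma} \in \zpur$. Under the decomposition $G = H \times \Gamma$ the element $\tau_p$ corresponds to $(\bar\phi,1)$, where $\bar\phi$ denotes the image of the arithmetic Frobenius in $H$, and comparing coefficients translates the defining equation $(1\otimes\phi)(x)=(\tau_p\otimes 1)x$ of $\Lambda_a$ into the recursion
\[c_{\bar\phi h,\gamma} = \phi^{-1}(c_{h,\gamma})\qquad\text{for all } h\in H,\ \gamma\in\Gamma.\]
Iterating yields $c_{\bar\phi^n,\gamma}=\phi^{-n}(c_{1,\gamma})$, and the consistency requirement at $\bar\phi^m = 1$ forces $c_{1,\gamma}\in\zpur^{\phi^m=\id}=W(\mathbb{F}_{p^m})=\mathcal{O}_K$.

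I would therefore define
\[\Phi\colon \Lambda_a \longrightarrow \mathcal{O}_K\kl\Gamma\kr,\qquad \sum_{h,\gamma} c_{h,\gamma}(h,\gamma)\longmapsto \sum_\gamma c_{1,\gamma}\,\gamma,\]
which is bijective by the above analysis (inverse: reconstruct the $c_{h,\gamma}$ from the $c_{1,\gamma}$ via the recursion). For $\Lambda$-linearity it suffices to verify that left multiplication by $\bar\phi \in H$ on $\Lambda_a$ corresponds on the right to the natural Galois action of $\bar\phi$ (i.e.\ the Frobenius) on $\mathcal{O}_K$, while $\Gamma$ acts tautologically. Freeness of rank one on the right then follows from the normal integral basis theorem for the unramified extension $K/\qp$, which gives $\mathcal{O}_K\cong\zp[H]$ as $\zp[H]$-modules, hence $\mathcal{O}_K\kl\Gamma\kr\cong\Lambda$.

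For $H$ infinite I would pass to the limit over the finite unramified subextensions $\qp\subseteq K'\subseteq K$. The projections $\widetilde\Lambda\twoheadrightarrow\widetilde\Lambda_{K'}$ commute with $\phi$ and with multiplication by $\tau_p$, hence yield a canonical isomorphism $\Lambda_a\cong\varprojlim_{K'}\Lambda_{a,K'}$. By the finite case the right-hand side becomes $\varprojlim_{K'}\mathcal{O}_{K'}\kl\Gamma\kr$; to identify the transition maps, note that for $K'\subseteq K''$ with $d=[K'':K']$ and $m'=[K':\qp]$ the projection $H_{K''}\twoheadrightarrow H_{K'}$ has fibres $\{\bar\phi^{im'} : 0\le i<d\}$, so the induced map on the coefficients at $h=1$ sends $c_{1,\gamma}$ to $\sum_{i=0}^{d-1}\phi^{-im'}(c_{1,\gamma})=\mathrm{Tr}_{K''/K'}(c_{1,\gamma})$, exactly the trace. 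The step I expect to require the most care is the remaining freeness of rank one over $\Lambda$ in the infinite case: it reduces to producing a norm-compatible system of normal integral basis generators for the tower $(\mathcal{O}_{K'})_{K'}$, a standard but nontrivial fact for unramified (pro-cyclic) Galois extensions of $\qp$.
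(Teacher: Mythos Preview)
Your argument is correct and essentially identical to the paper's proof: you both expand elements of $\widetilde\Lambda$ over $H$, translate the defining equation into the recursion $a_i=\phi^{-i}(a_0)$ with $a_0\in\O_K$, read off the isomorphism $\Lambda_a\cong\O_K\kl\Gamma\kr$, and then pass to the limit with trace transition maps. Two small points: the compatible system you need in the infinite case is \emph{trace}-compatible (not norm-compatible), and where you invoke this as a ``standard but nontrivial fact'', the paper supplies a one-line compactness argument---the sets $S_{K'}$ of normal integral basis generators of $\O_{K'}$ are homeomorphic to $\zp[H_{K'}]^\times$, hence compact and nonempty, so $\varprojlim_{K'} S_{K'}\neq\emptyset$.
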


\begin{proof}
We first assume $H=<\tau_p>$ to be finite of order $d$ and replace $\Gamma$ by a finite quotient
without changing the notation. Then any element $x\in \widetilde{\Lambda}=\zpur[\Gamma][H]$ can be
uniquely written as $\sum_{i=0}^{d-1} a_i \tau_p^i$ with $a_i\in \zpur[\Gamma]$ and $\phi$  acts
coefficient wise on the latter elements. The calculation
\begin{eqnarray*}
(1\otimes \phi)(x)-(\tau_p \otimes 1)x &=& \sum_{i=0}^{d-1} \phi(a_i) \tau_p^i
-\sum_{i=0}^{d-1}  a_i \tau_p^{i+1} \\
&=& \sum_{i=0}^{d-1}\big( \phi(a_i) -a_{i-1}\big) \tau_p^i
\end{eqnarray*}
with $a_{-1}:=a_{d-1}$ shows that $x$ belongs to $\Lambda_a$ if and only if $\phi^d(a_i)=a_i$ and
$\phi^{-i}(a_0)=a_i$ for all $i.$
 As $\zpur^{\phi^d=1}=\O_K,$ the canonical map
\[\Lambda_a\cong \O_K[\Gamma], \;\; \sum a_i \tau_p^i\mapsto a_0,\] is an isomorphism of
$\Lambda$-modules, the inverse of which is \[x\mapsto \sum_{h\in H}h\otimes h^{-1}(x)\] and which
is obviously functorial in $\Gamma,$ whence the same result follows for the original (infinite)
$\Gamma.$

Now, for    a surjection $\pi:H''\twoheadrightarrow H'$ it is easy to check that the trace
$Tr_{K''/K'}:\O_{K''}\to \O_{K'}$ induces a commutative diagram \be\label{twisttrace}\xymatrix{
  {\Lambda''_{a''}} \ar[d]_{\pi} \ar[r]^{\cong} & {\O_{K''}\kl \Gamma\kr} \ar[d]^{Tr_{K''/K'}} \\
   {\Lambda'_{a'}}  \ar[r]^{\cong} & {\O_{K'}\kl \Gamma\kr}   ,}\ee

whence the first claim follows. From the normal basis theorem for finite fields we obtain
(non-canonical) isomorphisms \[\O_{K'}\cong \zp[H_{K'}],\] which are compatible with trace and natural
projection maps. Indeed, the sets $S_{K'}:=\{a\in \O_{K'}| \zp[H_{K'}]a=\O_{K'}\} \cong \zp[H_{K'}]^\times$ are
compact, since $1 + Jac(\zp[H_{K'}])$ for the Jacobson radical $Jac(\zp[H_{K'}])$ is open in
$\zp[H_{K'}]^\times,$ and thus $\projlim{K'} S_{K'}$ is non-empty. Hence the trace maps induce
(non-canonical) isomorphisms $\projlim{K'} \O_{K'}\cong \zp\kl H\kr$ and $\projlim{K'} \O_{K'}\kl \Gamma\kr
\cong \zp\kl G\kr,$ respectively.
\end{proof}

We now review Coleman's exact sequence \cite{col1979,coleman83}, which is one   crucial ingredient
in the construction of the $\epsilon$-isomorphism. Let us first assume that {\em $K/\qp$ is
finite}.

Then $\lu(K_\infty):=\projlim{n,i} \mathcal{O}_{K_n}^\times/p^i$ with $K_n:=K(\mu_{p^n})$ and the following sequence of
$\Lambda$-modules is exact \be\label{colemanfinite}\xymatrix{
  0 \ar[r]^{ } & {\zp(1)}  \ar[r]^{\iota} & {\lu(K_\infty)} \ar[r]^{\mathrm{Col}} & {\mathcal{O}_K\kl \Gamma \kr}  \ar[r]^{\pi} & {\zp(1)} \ar[r]^{ } &
0,
}\ee
 where

\begin{itemize}
  \item  $\iota(\epsilon)=  \epsilon$,
  \item $\mathrm{Col}(u):=\mathrm{Col}_\epsilon(u)$ is defined by the rule \be\label{colpowerseries}\mathcal{L}(g_u):=(1-\frac{\varphi}{p})\log(g_u)=\frac{1}{p} \log(\frac{g_u^p}{ \varphi(g_u)})=\mathrm{Col}(u)\cdot
(X+1)\ee in $\O_K\kl X\kr$ with $ g_u:=g_{u,\epsilon}\in\O_K\kl X\kr$ the Coleman power series
satisfying $g^{\phi^{-n}}(\epsilon_n-1)=u_n$ for all $n.$ Here $\phi$ is acting coefficientwise on
$g_u=g_u(X)$, while $ \varphi:\O_K\kl X\kr \to \O_K\kl X\kr $ is induced by $X\mapsto (X+1)^p-1$ and
the action of $\phi$ on the coefficients. Furthermore, the $\O_K$-linear action of $\O_K\kl \Gamma
\kr$ on $\O_K\kl X \kr$ is induced by $\gamma\cdot X=(1+X)^{\kappa(\gamma)}-1.$
  \item $\pi$ is the composite of $  {\O_K\kl\Gamma\kr  }  \to { \O_K},$ $\gamma\mapsto \kappa(\gamma),$  followed by the trace
  $ Tr_{K/\qp}: { \O_K} \to {\zp }   $ (and strictly speaking followed by $\zp\to \zp(1),\; c\mapsto c \epsilon$).
\end{itemize}

Using Proposition \ref{twist} and the isomorphism
\[ \Lambda_{[\Tun,\tau_p]^{-1}}\cong \Tun\otimes_\Lambda \Lambda_{[\Tun,\tau_p]^{-1}}, a \mapsto (1\otimes  \epsilon) \otimes a,\]
we thus obtain an exact sequence of $\Lambda$-modules \be\label{col2} \xymatrix{
  0 \ar[r]^{ } & {\zp(1)}  \ar[r]^{ } & {\lu(K_\infty)}  \ar[r]^(0.3){\mathcal{L}_{K,\epsilon}} & {\Tun(K_\infty)\otimes_\Lambda \Lambda_{[\Tun,\tau_p]^{-1}}} \ar[r]^{ } & {\zp(1)} \ar[r]^{ } & 0   }.
\ee
In the end we actually shall need the  analogous exact sequence
\be\label{col2bis} \xymatrix{
  0 \ar[r]^{ } & {\zp(1)}  \ar[r]^{ } & {\lu(K_\infty)}  \ar[r]^(0.3){-\mathcal{L}_{K,-\epsilon}} & {\Tun(K_\infty)\otimes_\Lambda \Lambda_{[\Tun,\tau_p]^{-1}}} \ar[r]^{ } & {\zp(1)} \ar[r]^{ } & 0   }.
\ee
 where we replace $\epsilon$ by $-\epsilon$ everywhere in the construction and where we multiply (only) the middle map by $-1$. Note that the maps involving $\zp(1)$ do not change compared with \eqref{col2}.

In order to deal with the case that $K/\qp$ {\em is infinite,} \i.e., $p^\infty |[K:\qp],$ let
$\qp\subseteq L\subseteq L' \subseteq K$ be finite intermediate extensions. We claim that the
following diagram
\be \label{commCol}
\xymatrix{
   0 \ar[r]^{ } & {\zp(1)}  \ar[r]^{ } \ar[d]_{N_{L_\infty'/L_\infty}=[L':L]\cdot}   & {\lu(L_\infty)}  \ar[r]^(0.3){\mathcal{L}_{L',\epsilon}} \ar[d]_{N_{L_\infty'/L_\infty}}   & {\Tun(L_\infty')\otimes_\Lambda \Lambda_{[\Tun,\tau_p]^{-1}}} \ar[r]^{ }  \ar[d]_{pr_{L'/L}}
  \ar[r]^{ } & {\zp(1)}\ar[d]_{=} \ar[r]^{ } & 0   \\
  0 \ar[r]^{ } & {\zp(1)}  \ar[r]^{ } & {\lu(L_\infty)}  \ar[r]^(0.3){\mathcal{L}_{L,\epsilon}} & {\Tun(L_\infty)\otimes_\Lambda \Lambda_{[\Tun,\tau_p]^{-1}}} \ar[r]^{ } & {\zp(1)} \ar[r]^{ } & 0   }
\ee commutes, where the norm maps $N_{L_\infty'/L_\infty}=N_{L'/L}$ are induced  by $N_{L_n'/L_n}$
for all $n,$ which on $\zp(1)$ amounts to multiplication by $[L':L]$ while
$N_{L_\infty'/L_\infty}:\lu(L'_\infty)\to \lu(L_\infty)$ is nothing else than the projection on the
corresponding   inverse (sub)system. Recalling \eqref{twisttrace} this is equivalent to the
commutativity of
\be \label{commCol1} \xymatrix{
   0 \ar[r]^{ } & {\zp(1)}  \ar[r]^{ } \ar[d]_{N_{L_\infty'/L_\infty}=[L':L]\cdot}   & {\lu(L'_\infty)}  \ar[r]^{\mathrm{Col}_{L',\epsilon}} \ar[d]_{N_{L_\infty'/L_\infty}}   & {\O_{L'}\kl \Gamma \kr} \ar[r]^{ }  \ar[d]_{Tr_{L'/L}}
  \ar[r]^{ } & {\zp(1)}\ar[d]_{=} \ar[r]^{ } & 0   \\
  0 \ar[r]^{ } & {\zp(1)}  \ar[r]^{ } & {\lu(L_\infty)}  \ar[r]^{\mathrm{Col}_{L,\epsilon}} & {\O_{L}\kl \Gamma \kr} \ar[r]^{ } & {\zp(1)} \ar[r]^{ } & 0   }
\ee

where $Tr_{L'/L}:\O_{L'}\kl \Gamma\kr\to \O_L\kl \Gamma \kr$ is induced by the trace on the
coefficients.    While the left and right square commute obviously, we sketch how to
check this   for the middle: %Since, $G(L'/L)$ acting trivially on $\epsilon,$
%\begin{eqnarray*}
%\big(N_{L'/L}(g_{u'})^{\phi^{-n}}\big)(\epsilon_n)&=&\big(N_{L'/L}(g_{u'}^{\phi^{-n}})\big)(\epsilon_n)\\
%&=&N_{L'/L}\big((g_{u'})^{\phi^{-n}}(\epsilon_n)\big)=N_{L'/L}(u_n'),
%\end{eqnarray*}
%
%Although the argument is well-known (see \cite{deSh} or \cite{Ya}), for the
%convenience of the reader we recall it here.

Firstly note that by the uniqueness of the Coleman power series \[N_{L'/L}(g_{u'})=g_{N_{L'/L}(u')}\] for $u'\in\lu(L'_\infty),$ where
$N_{L'/L}:\O_{L'}\kl X\kr \to\O_{L}\kl X\kr$ is defined as $f(X)\mapsto \prod_{\sigma\in G(L'/L)}
f^\sigma(X),$ where $\sigma$ acts coefficient wise on $f$ (see the proof of \cite[Lem.\ 2]{Ya} for a similar argument). Secondly, one has
 \[\mathcal{L}(N_{L'/L}(g))=Tr_{L'/L}\mathcal{L}(g)\] for $g\in \O_{L'}\kl X\kr ^\times,$
 %\begin{eqnarray*}
%\mathcal{L}(N_{L'/L}(g))&=&\log N_{L'/L}(g)-\frac{1}{p}\log\big((N_{L'/L}(g))^\phi\big) \\
%&=& Tr_{L'/L}\big(\log(g)-\frac{1}{p}\log(g^\phi)\big)\\
%&=&Tr_{L'/L} \mathcal{L}(g),
%\end{eqnarray*}
since $N_{L'/L}$ and $\phi$ commute. Sofar we have seen that
\[Tr_{L'/L} \mathcal{L}(g_{u'})=\mathcal{L}(g_{N_{L'/L}(u')}),\]
which implies the claim
%Using that $Tr_{L'/L}$ is% continuous and $\zp$-linear  as well as for
%$a\in\O_{L'},\gamma\in\Gamma,$ the calculation \begin{eqnarray*} Tr_{L'/L}((a\gamma)\cdot (X+1))&=&
%Tr_{L'/L}(a(X+1)^{\kappa(\gamma)})\\
%&=&Tr_{L'/L}(a)\cdot (X+1)^{\kappa(\gamma)}\\
%&=&Tr_{L'/L}(a\gamma)\cdot (X+1)
%\end{eqnarray*}
%the claim follows from \ref{colpowerseries}:
%\[Tr_{L'/L} \mathcal{L}(g_{u'})=Tr_{L'/L}(\mathrm{Col}(u')\cdot (X+1))=Tr_{L'/L}(\mathrm{Col}(u'))\cdot (X+1),\]
%i.e.,
\[Tr_{L'/L}(\mathrm{Col}(u'))=\mathrm{Col}(g_{N_{L'/L}(u')})\]
using the defining equation \eqref{colpowerseries} and the compatibility of $Tr_{L'/L}$ with the Mahler transform  $\frak{M}:\O_K\kl\Gamma\kr\to \O_K\kl X\kr,\; \lambda\mapsto \lambda\cdot(1+X)$.

Taking inverse limits of \eqref{commCol}  we obtain the exact sequence \be\label{colemaninfinite}
\xymatrix@C=0.5cm{
  0 \ar[r] & {\lu(K_\infty)} \ar[rr]^(0.3){\mathcal{L}_{K ,\epsilon}} && {\Tun(K_\infty)\otimes_\Lambda \Lambda_{[\Tun,\tau_p]^{-1}}} \ar[rr]^{ } && {\zp(1)} \ar[r] & 0. }
\ee
Similarly, starting with \eqref{col2bis} we obtain the exact sequence
\be\label{colemaninfinitebis}
\xymatrix@C=0.5cm{
  0 \ar[r] & {\lu(K_\infty)} \ar[rr]^(0.3){-\mathcal{L}_{K ,-\epsilon}} && {\Tun(K_\infty)\otimes_\Lambda \Lambda_{[\Tun,\tau_p]^{-1}}} \ar[rr]^{ } && {\zp(1)} \ar[r] & 0. }
\ee

\subsection{Galois cohomology}

The complex $R\Gamma(\qp,\Tun(K_\infty))$ of continuous cochains has only   non-trivial cohomology
groups for $i=1,2$:
\be\label{localKummer}\H^1(\qp,\Tun(K_\infty))=\projlim{\qp\subseteq L\subseteq K_\infty
\hbox{\footnotesize finite}} \H^1(L,\zp(1))=\projlim{L} (L^\times)^{\wedge p}\ee by Kummer theory
and \be \label{localTate} \H^2(\qp,\Tun(K_\infty))=\projlim{\qp\subseteq L\subseteq K_\infty
\hbox{\footnotesize finite}} \H^2(L,\zp(1))=\zp\ee by local Tate-duality; here the sign of the
trace map $\mathrm{tr}:\H^2(\qp,\Tun(K_\infty))\cong \zp$ is normalised according to \cite[Ch. II,
\S 1.4]{kato-lnm} as follows: if $\theta \in \H^1(\qp,\Lambda)$ denotes the character $\xymatrix{
  G_{\qp} \ar[r]^{w} & \hat{\z}\ar[r]^{canon } & \Lambda   },$  where $w$ is the map which sends
  $Frob_p$ to $1$ and the inertia subgroup to $0$,  then we have a commutative diagram
  \be\label{normalisation}\xymatrix{
    {\mathbb{Q}_p^\times} \ar[d]_{\delta} \ar[r]^{v} & {\z}   \ar[r]^{canon } & {\zp}   \\
    {\H^1(\qp,\zp(1))} \ar[rr]^{-\cup\theta} &   & {\H^2(\qp,\zp(1))}\ar[u]^{\cong}_{\mathrm{tr}}   ,}\ee
    where $v$ denotes the normalised
  valuation map and $\delta$ is the Kummer map. Furthermore, the first isomorphism \eqref{localKummer} induces
\begin{description}
  \item[-] a canonical exact sequence \be\label{localunitsfinte} \xymatrix@C=0.5cm{
  0 \ar[r] & {\lu (K_\infty)} \ar[rr]^{ } && {\H^1(\qp,\Tun(K_\infty))} \ar[rr]^(0.7){-\hat{v}} && {\zp}\ar[r] & 0,
  }\ee
  if $K/\qp$ is finite, $\hat{v}$ being induced from the valuation maps $v_L:L^\times\to \z$ (the sign before $\hat{v}$ will become evident by the  descent calculation \eqref{loclocalunitsfinte}),
  \item[-] an isomorphism \be \label{localunitsinfinite}\lu(K_\infty)\cong
  \H^1(\qp,\Tun(K_\infty)),\ee if $p^\infty | [K:\qp].$
\end{description}

\subsection{Determinants}\label{secdet}

Now we assume that $K/\qp$ is {\em infinite.} Then
\[G\cong G'\times\Delta,\]
where $\Delta$ is a finite abelian group of order $d$ prime to $p$ and $G'\cong\zp^2.$ Thus
\[\Lambda(G)=\zp[\Delta]\kl\zp^2\kr\]
is a product of regular, hence  Cohen-Mac Caulay rings. Setting $\O:=\zp[\mu_d]$ we have
\[\Lambda(G)\subseteq \Lambda_\O(G)=\prod_{\chi \in \mathrm{Irr}_{\overline{\qp}}(\Delta)} \La_\O(G')
e_\chi,\] where $e_\chi$ denote the idempotents corresponding to $\chi,$ while
$\mathrm{Irr}_{\overline{\qp}}(\Delta)$ denotes the set of $\overline{\qp}$-rational characters of
$\Delta.$ Since regular rings are normal - or by Wedderburn theory -, it follows that there is a
product decomposition into local regular integral domains
\[\La(G)=\prod_{\chi \in
\mathrm{Irr}_{{\qp}(\Delta)}} \La_{\O_\chi}(G') e_\chi,\] where now $\mathrm{Irr}_{ {\qp}}(\Delta)$
denotes the set of $ {\qp}$-rational characters and $\O_\chi$ is the ring of integers of
$K_\chi:=\mathrm{End}_{\zp[\Delta]}(\chi).$

For the various rings $R$ showing up like $\Lambda(G)$ for different $G,$ we fix compatible
determinant functors $\d_R: \mathrm{D}^{\mathrm{p}}(R)\to \mathcal{P}_R$
 from the category of perfect complexes of $R$-modules (consisting of (bounded) complexes of finitely
 generated $R$-modules, quasi-isomorphic to strictly perfect complexes, i.e., bounded complexes of
 finitely generated projective $R$-modules) into the Picard category $\mathcal{P}_R$ with unit
 object $\u_R=\d_R(0),$  see Appendix \ref{determinants} %\cite[\S 2]{bf}   and \cite[1.2]{fukaya-kato}. For
for the yoga of determinants used in this article. %you may also consult \cite[\S 2]{burns-ven1} or
%\cite[\S 1]{ven-BSD}.

\begin{lem}\label{cantriv}
For all $r\in\z$ there exists a canonical isomorphism \[\xymatrix{
  {\u_\Lambda \ar[r]^(0.35){\mathrm{can}_{\zp(r)}} }&    {\d_\Lambda(\zp(r))}.}\]
\end{lem}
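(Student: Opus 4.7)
The statement asserts a canonical trivialisation of $\d_\Lambda(\zp(r))$, where $\zp(r)$ is treated as a $\Lambda$-module with the action of $G$ given by the character $\kappa^r : G \to \zp^\times$. My plan is to produce an explicit finite free resolution of $\zp(r)$ of Koszul type and to apply the determinant functor termwise; the canonical trivialisations of the individual free terms will then combine via the alternating tensor product to give the desired isomorphism.

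By Section \ref{secdet}, $G \cong G' \times \Delta$ with $G' \cong \zp^2$ and $|\Delta|$ prime to $p$, so $\Lambda$ decomposes as a finite product of regular local rings and hence has finite global dimension; in particular $\zp(r)$ is a perfect $\Lambda$-module. Choosing topological generators $\gamma_1, \gamma_2$ of $G'$, the Koszul complex on the regular sequence $\gamma_1 - \kappa(\gamma_1)^r, \gamma_2 - \kappa(\gamma_2)^r \in \Lambda$ gives a finite free resolution of the $\Lambda$-module $\Lambda/(\gamma_i - \kappa(\gamma_i)^r)_{i=1,2} \cong \zp[\Delta](\kappa^r|_{G'})$. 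Since $p \nmid |\Delta|$, the idempotent $e_{\kappa^r|_\Delta} := |\Delta|^{-1}\sum_{\delta\in\Delta}\kappa(\delta)^{-r}\delta \in \zp[\Delta]$ splits off $\zp(r)$ as a direct $\zp[\Delta]$-summand, yielding a finite projective resolution $P^\bullet \to \zp(r)$ over $\Lambda$ with each $P^i$ free of explicit rank. Applying $\d_\Lambda$ termwise and using the Koszul bases to identify each $\d_\Lambda(P^i) \cong \u_\Lambda$ canonically, the alternating tensor product furnishes an isomorphism $\u_\Lambda \xrightarrow{\sim} \d_\Lambda(P^\bullet) \cong \d_\Lambda(\zp(r))$.

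The main obstacle I foresee is canonicity, i.e.\ independence of the isomorphism from the chosen topological generators $\gamma_i$ and from the particular projective resolution. This should be formal in the determinant formalism of Appendix \ref{determinants}: since $\d_\Lambda$ factors through the derived category $\mathrm{D}^{\mathrm{p}}(\Lambda)$, any two finite projective resolutions of $\zp(r)$ give rise to the same trivialisation; and a change of topological generators $\gamma_i \to \gamma_i'$ corresponds to a base change by an element of $\mathrm{GL}_2(\zp) \subset \Lambda^\times$ whose contribution to the alternating product of determinants cancels. The fixed $\zp$-basis $\epsilon$ of $\zp(1)$ enters only through the normalisation of the cyclotomic character $\kappa$, so no further choice enters the construction.
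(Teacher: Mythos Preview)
Your approach is genuinely different from the paper's, and there is a real gap in the canonicity argument.

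The paper does \emph{not} build an explicit resolution. Instead it argues abstractly: since $\zp(r)$ has codimension $\geq 2$ in $\Lambda$, one has $\zp(r)_{\mathfrak p}=0$ for every prime $\mathfrak p$ of height $\leq 1$, whence a \emph{tautological} isomorphism $c_{\mathfrak p}\colon \u_{\Lambda_{\mathfrak p}}\cong \d_{\Lambda_{\mathfrak p}}(0)$ at all such primes. Starting from any $c_0\colon\u_\Lambda\cong\d_\Lambda(\zp(r))$ (which exists since $[\zp(r)]=0$ in $K_0(\Lambda)$), the discrepancies $\lambda_{\mathfrak p}\in K_1(\Lambda_{\mathfrak p})$ between $(c_0)_{\mathfrak p}$ and $c_{\mathfrak p}$ are shown to be constant along each connected component and, by regularity ($\bigcap_{\mathrm{ht}(\mathfrak p)=1}\Lambda_{\mathfrak p}=\Lambda$), to lie in $\Lambda^\times$. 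Then $\mathrm{can}_{\zp(r)}:=c_0\cdot\lambda$ is the unique trivialisation whose localisation at every height-$\leq 1$ prime is the identity on $\u$. This characterisation is exactly what is invoked later (e.g.\ in the construction of $\epsilon_\Lambda(\Tun)$ and in the local main conjecture argument after Theorem~\ref{main}), and it immediately generalises to any codimension-$\geq 2$ module, cf.\ the Remark following the Lemma.

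Your argument has two weaknesses. First, the sentence ``since $\d_\Lambda$ factors through $\mathrm D^{\mathrm p}(\Lambda)$, any two finite projective resolutions give the same trivialisation'' conflates objects with morphisms: the factorisation through the derived category pins down $\d_\Lambda(P^\bullet)$ as an \emph{object}, but your trivialisation $\u_\Lambda\to\d_\Lambda(P^\bullet)$ is built from the chosen bases of the free terms and is genuine extra data that can change by an element of $K_1(\Lambda)$. (Concretely, a change of topological generators replaces $(x_1,x_2)$ by $(x_1,x_2)A$ for some $A\in\mathrm{GL}_2(\Lambda)$ --- not $\mathrm{GL}_2(\zp)$ as you write --- and one must check that the induced map $\bigwedge^\bullet A$ on the Koszul complex contributes $\prod_k(\det A)^{(-1)^k\binom{1}{k-1}}=1$.) Second, and more seriously, even once your isomorphism is shown to be choice-independent, you have not verified that it satisfies the paper's defining property: that it base-changes to the identity $\u_{\Lambda_{\mathfrak p}}=\d_{\Lambda_{\mathfrak p}}(0)$ at every prime where $\zp(r)$ vanishes. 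At such a prime your map becomes the composite of the basis trivialisation of the (now acyclic) Koszul complex with the inverse of the acyclicity trivialisation from B.e) --- i.e.\ the torsion of the Koszul complex --- and you would need to compute this and show it equals $1$. Without this, you have produced \emph{a} canonical isomorphism, but not demonstrably the one the paper defines and uses.
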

\begin{rem}
The proof will show  that the same result holds for $G\cong \zp^k\times \Delta,$ $k\geq 2$ and any
$\La(G)$-module $M$ of Krull codimension at  least $2.$
\end{rem}

\begin{proof}
Since \[\Ext_{\Lambda(G)}^i(\zp(r),\Lambda(G))\cong\Ext^i_{\La(G')}(\zp(r),\La(G'))=0\]  for $i\neq
k (=2)$ we see that the codimension of $\zp(r)$  equals $k+1-1=k \geq 2.$ Setting $M=\zp(r)$ we
first show that the class $[M]$ in $G_0(\Lambda)=K_0(\Lambda)$ vanishes, i.e., there exists an
isomorphism $c_0:\u\cong \d(M)$ by the definition of $\mathcal{P}_R$ in \cite{fukaya-kato}. Since
\[K_0(\Lambda)=\bigoplus_{\chi}K_0(\Lambda_{\O_\chi}(G'))\cong \bigoplus_{\chi} \z,\] where the
last map is given by the rank, the claim follows, because $e_\chi M$ are torsion
$\La_{\O_\chi}(G')$-modules. By the knowledge of  the codimension we have $M_\mathfrak{p}=0$ for
all prime ideals $\p\subset \Lambda$ of height at most $1.$ In particular, we obtain canonical
isomorphisms
\[c_\p:\u_{\Lambda_\p}\cong \d_{\Lambda_\p}(M_\p).\] Since
$\mathrm{Mor}(\u_{\Lambda_\p},\d_{\Lambda_\p}(M_\p))$ is a (non-empty) $K_1(\Lambda_\p)$-torsor,
there exist unique $\lambda_\p\in \Lambda_\p^\times= K_1(\Lambda_\p)$ such that
\[c_\p=(c_0)_\p \cdot\lambda_\p,\] where $(c_0)_\p=\Lambda_\p\otimes_\La c_0.$ Now let
$\mathfrak{q}=\mathfrak{q}_\chi$ be a prime of height zero corresponding to $\chi\in
\mathrm{Irr}_{\qp}(\Delta).$ Then
\begin{eqnarray*}
c_{\mathfrak{q}}&=&\Lambda_{\mathfrak{q}}\otimes_{\Lambda_\p} c_\p \\
&=&\Lambda_{\mathfrak{q}}\otimes_{\Lambda} c_0  \cdot\lambda_\p= (c_0)_{\mathfrak{q}}\lambda_\p
\end{eqnarray*}
for all prime ideals $\p\supset \mathfrak{q}$ of height one, whence
\[\lambda_\p=\lambda_{\mathfrak{q}}.\] Thus \[\lambda_{\mathfrak{q}}\in \bigcap_{\p\supset \mathfrak{q}, \mathrm{ht}(\p)=1} \Lambda_\p^\times=\Lambda_{\O_\chi}(G')^\times\]
($\Lambda_{\O_\chi}(G')$ being %Cohen-Mac Caulay,
regular, i.e., $\bigcap_{\p\supset \mathfrak{q}, \mathrm{ht}(\p)=1} \Lambda_\p
=\Lambda_{\O_\chi}(G') $) and \[\mathrm{can}_M:=(c_0\cdot \lambda_{\mathfrak{q}_\chi})_\chi:\u_\La
\to \d_\La(M)\] is unique and independent of the choice of $c_0.$ Here we used the canonical
decomposition $K_1(\La(G))\cong\bigoplus_\chi K_1(\La_{\O_\chi}(G')).$
\end{proof}

Now we shall finally define the $\epsilon$-isomorphism for the pair $(\La(G),\Tun):$ \be\label{epsTun}
\epsilon_{\La}(\Tun):=\epsilon_{\La,\epsilon}(\Tun):\u_\La \to
\d_\La(R\Gamma(\qp,\Tun))\d_\La(\Tun\otimes_\La \La_{\tau_p}):\ee Since $\La$ is regular we obtain
by  property B.h) in the Appendix
\begin{eqnarray*}
\d_\La(R\Gamma(\qp,\Tun))^{-1}&\cong& \d_\La(\H^1(\qp,\Tun))\d_\La(\H^2(\qp,\Tun))^{-1} \\
&\cong& \d_\La(\lu(K_\infty))\d_\La(\zp)^{-1}\\
&\cong& \d_\La(\Tun\otimes_\La \La_{\tau_p})\d_\La(\zp(1))^{-1}\d_\La(\zp)^{-1} \\
&\cong& \d_\La(\Tun\otimes_\La \La_{\tau_p}),
\end{eqnarray*}
where we used \eqref{localTate}, \eqref{localunitsinfinite} in the second equality,
\eqref{colemaninfinitebis},i.e., in particular the map $-\mathcal{L}_{K,\epsilon^{-1}}$ (sic!) and the regularity in the
third, while the identifications $\mathrm{can}_{\zp(1)}$ and $\mathrm{can}_{\zp }$ in the last
step. This induces \eqref{epsTun}.

In the spirit of \cite{fukaya-kato} this can be reformulated in a way that also covers
non-commutative rings $\La$ later. For any $a\in K_1(\widetilde{\La})$ define
 \[K_1(\La)_a:=\{x\in K_1(\widetilde{\La})| (1\otimes \phi)_*(x)=a \cdot x\},\]
which is non-empty by \cite[prop.\ 3.4.5]{fukaya-kato}. If $\La$ is the Iwasawa-algebra of an
abelian $p$-adic Lie-group, i.e., $K_1(\widetilde{\La})=\widetilde{\La}^\times,$ this implies in
particular that $\La_a\cap \widetilde{\La}^\times =K_1(\La)_a\neq \emptyset,$ whence we obtain an
isomorphism of $\widetilde{\La}$-modules \be \label{twisttilde} \La_a\otimes_\La\widetilde{\La}
\cong \widetilde{\La}, \;\; x\otimes y\mapsto x\cdot y.\ee Thus, one immediately sees, that the map
\[\lu(K_\infty)\to \Tun\otimes_\La \La_{\tau_p}\subseteq \Tun\otimes_\La \widetilde{\La}\]

extends to an exact sequence of $\widetilde{\La}$-modules \be
\label{colemaninfinitebasechange}\xymatrix@C=0.5cm{
  0 \ar[r] & {\lu(K_\infty)\otimes_\La\widetilde{\La}}  \ar[rr]^{ } && {\Tun\otimes_\La \widetilde{\La}} \ar[rr]^{ } && {\zpur(1)} \ar[r] & 0
  ,}\ee
  which in fact is canonically isomorphic to the base change of \eqref{colemaninfinite} from $\La$-
  to $\widetilde{\La}$-modules. Therefore   base changing     \eqref{epsTun} by $\widetilde{\La}\otimes_\La -$ and using
\eqref{twisttilde} (tensored with $\Tun(K_\infty)$) we obtain \be\label{epsTun'}
\epsilon'_\La(\Tun):=\epsilon'_{\La,\epsilon}(\Tun):\u_{\widetilde{\La}} \to
\d_\La(R\Gamma(\qp,\Tun))_{\widetilde{\La}}\d_\La(\Tun)_{\widetilde{\La}},\ee which actually arises
as base-change from some
\[\epsilon_0:\u_\La \to \d_\La(R\Gamma(\qp,\Tun(K_\infty))\d_\La(\Tun(K_\infty))\]
plus a twisting by an element $\delta\in K_1(\La)_{\tau_p},$ i.e.,
\[\epsilon'_\La(\Tun) \in \mathrm{Mor}(\u_\La, \d_\La(R\Gamma(\qp,\Tun(K_\infty))\d_\La(\Tun(K_\infty)))\times^{K_1(\La)} K_1(\La)_{\tau_p}.\]
Indeed, fixing an isomorphism $\psi: \La\cong  \La_{\tau_p}$ (cf.\ Proposition \ref{twist}) sending
$1$ to $\delta$, \eqref{twisttilde} implies that $\delta\in K_1(\La)_{\tau_p}$ and the claim
follows from the commutative diagram
\[\xymatrix{
  {\Tun\otimes_\La \widetilde{\La}} \ar[r]^{\Tun\otimes \delta^{-1}} & {\Tun\otimes_\La \widetilde{\La}}   \\
  {\Tun\otimes_\La \La_{\tau_p}} \ar[r]^{\Tun\otimes \psi^{-1}}\ar@{^(->}[u]_{ }  &   {\Tun\otimes_\La  {\La}}\ar@{^(->}[u]_{ }  }\]

( $ \epsilon'_\La(\Tun)$  equals $\delta $ times the base change of
$\epsilon_0:=(\Tun\otimes\psi^{-1})\circ \epsilon_\La(\Tun)$).

\subsection{Twisting}

We recall the following definition from \cite[\S1.4]{fukaya-kato}.

\begin{defn}
A ring $R$ is of
\begin{description}
  \item[(type 1)] if there exists a two sided ideal $I$ of $R$ such that $R/I^n$ is finite of order a power of
  $p$ for any $n \geq 1$ and such that $R\cong \projlim{n} R/I^n.$
  \item[(type 2)] if $R$ is the matrix-algebra $M_n(L)$ of some finite extension $L$ over $\qp$ and
  some dimension $n\geq 1.$
\end{description}
\end{defn}

By lemma 1.4.4 in (loc.\ cit.) $R$ is of type 1 if and only if the defining condition above holds
for the Jacobson ideal $J=J(R).$ Such rings are always semi-local and $R/J$ is a finite product of
matrix algebras over finite fields.

Now let $R$ be a commutative ring of type 1 and let $\T=\T_\chi$ be a free $R$-module of rank one
with Galois action given by
\[\chi=\chi_\T: G_{\qp} \to R^\times\]
which factors through $G.$ By $\tilde{\chi}_\T$ we denote the induced ring homomorphism $\La(G)\to
R.$ Furthermore let $Y=Y_\chi$ be the $(R,\La(G))$-bimodule which is $R$ as $R$-module and where
$\La(G)$ is acting via \[\chi_Y:=\tilde{\chi}_\T^{-1}\chi_{cyc}:\La(G)\to R\] (from the right)
where
\[\chi_{cyc}: \La(G) \to \zp\to R\] is induced by the cyclotomic character and the unique ring
homomorphism $\zp\to R.$  Then the map
\[\xymatrix{
  Y\otimes_{\La(G)} \Tun \ar[r]^(0.7){\cong} &\T, }\;\;  y\otimes t\mapsto y\cdot \chi_Y(t),\] is   an
isomorphism of $R$-modules which is Galois equivariant, where the Galois action on the tensor
product is given by $\sigma(y\otimes t)=y\otimes \sigma(t)$ for $\sigma \in G_{\qp}.$

Let $\widetilde{R}$ and $R_a$ be defined in the same way as for $\La.$ Then, using the isomorphisms
\[Y\otimes_\La \d_\La(R\Gamma(\qp,\Tun ))\cong \d_R(R\Gamma(\qp,Y\otimes_\La\Tun ))\cong
\d_R(R\Gamma(\qp,\T ))\] by \cite[1.6.5]{fukaya-kato} and
\[R\otimes_\La \La_a\cong R_{\chi (a)},\] where $\chi:\La\to R$ denotes a continuous ring homomorphism, we may define the following
$\epsilon$-isomorphisms.

\begin{defn}
In the above situation we set \[\epsilon_{R}(\T):=\epsilon_{R,\epsilon}(\T):=Y\otimes_\Lambda
\epsilon_{\Lambda,\epsilon} (\Tun):\u_R \to \d_R(R\Gamma(\qp,\T ))\d_R(\T \otimes_R
R_{\chi(\tau_p)})\] and
\[\epsilon'_R(\T):=\epsilon'_{R,\epsilon}(\T):=Y\otimes_\Lambda \epsilon'_{\Lambda,\epsilon}(\Tun): \u_{\widetilde{R}} \to\d_R(R\Gamma(\qp,\T ))_{\widetilde{R}}\d_R(\T )_{\widetilde{R}}.\]
\end{defn}

By definition we have the following important twist invariance \be\label{twistinvariance}
Y'\otimes_R \epsilon_R(\T)=\epsilon_{R'}(\T')\mbox{ and }Y'\otimes_R
\epsilon'_R(\T)=\epsilon'_{R'}(\T')\ee for any $(R',R)$-bimodule $Y'$ which is projective as
$R'$-module and satisfies $Y'\otimes_R \T \cong \T'.$ Here $R$ and $R'$ denote commutative rings of
type $1$ or $2$. Indeed, to this end the definition extends to  all pairs $(R,\T)$ where $R$ is a (not
necessarily commutative) ring of type 1 or 2 and $\T$ stands for a projective $R$-module such that
there exists a $(R,\Lambda)$-bimodule $Y$ which is projective as $R$-module and such that $\T\cong
Y\otimes_\Lambda \Tun.$ In this context we denote by $[\T,\sigma],$ $\sigma\in G_{\qp}$, the
element in $K_1(R)$ induced by the action of $G_{\qp}$ on $\T$; note that this induces a
homomorphism $[\T,-]:G(\mathbb{Q}_p^{ab})\to K_1(R).$

\begin{ex}
Let $\psi: G_F\to \mathbb{Z}_p^\times$ be a Gr\"{o}ssencharacter of an imaginary quadratic field $F$
 such that $p$ is split in $F$ and assume that its restriction to $G_{F_\nu},$  $\nu$ a place above $p,$ factors through $G.$ We write
$\T_\psi$ for the free rank one $\La(G)$-module with Galois action given by
$\sigma(\lambda)=\lambda\bar{\sigma}^{-1}\psi(\sigma).$   Then we also write
$\epsilon_\La(\psi)$ for $\epsilon_\La(\T_\psi).$
\end{ex}

\subsection{The $\epsilon$-conjecture}

We fix $K/\mathbb{Q}_p$ infinite and recall that $G=G(K_\infty/\mathbb{Q}_p)$ as well as
$\La=\La(G)$  and $\La_\O=\La_\O(G)$ for $\O=\O_L$ the ring of integers of some finite extension
$L$ of $\mathbb{Q}_p.$ If $\chi:G\to \O_L^\times$ denotes any continuous character such that the representation
\[ V_\chi:=L(\chi),\]
   whose underlying  vector space is just $L$ and whose
  $G_{\qp}$-action is given by $\chi$, is de Rham, hence potentially semistable by \cite{serre} (in this classical case) or by \cite{berger02} (in general), then
we have
\[L\otimes_{\O_L} \epsilon'_{\O_L}(\T_\chi)=\epsilon'_{L }(V_\chi)\]
by definition. The $\epsilon$-isomorphism conjecture by Fukaya and Kato in \cite[conjecture
3.4.3.]{fukaya-kato} states that
\begin{equation}
\label{conjecture}  \epsilon'_{L }(V_\chi) =\Gamma_L(V_\chi)\cdot \epsilon_{L,\epsilon,dR}(V_\chi)\cdot
\theta_L(V_\chi),
\end{equation}
where, for any de Rham $p$-adic representation $V$ of $G_{\qp}$,

a) $\Gamma_L(V):=\prod_\z \Gamma^*(j)^{-h(-j)}$  with $h(j)=\dim_L gr^j D_{dR}(V)$ and
\[\Gamma^*(j)=\left\{
                \begin{array}{ll}
                  (-1)^j(-j)!^{-1}, & \hbox{$j\leq 0;$} \\
                  \Gamma(j), & \hbox{$j>0$,}
                \end{array}
              \right.
\]
denotes the leading coefficient of the $\Gamma$-function,

b) the map
\[\epsilon_{dR}(V):=\epsilon_{L,\epsilon,dR}(V):\u_{\widetilde{L}}\to\d_{\widetilde{L}}(V)\d_{\widetilde{L}}(D_{dR}(V))^{-1},\]
with $\widetilde{L}:= \qnr\otimes_{\qp} L$ is defined in   \cite[prop.\
3.3.5]{fukaya-kato}. %\footnote{\label{epsilon_p}$\epsilon_p(V)=\epsilon(D_{pst}(V))$ where
%$D_{pst}(V)$ is endowed with the linearized action of the Weil-group and thereby considered as a
%representation of the Weil-Deligne group, see \cite{fontaine-pss}. Furthermore, we suppress the
%dependence of the choice of a Haar measure and of $t=``2\pi i"$ in the notation. The choice of
%$t=(t_n)\in\z_p(1)$ determines a homomorphism $\psi_p:\qp\to\bar{\qp}^\times$ with
%$\ker(\psi_p)=\zp$ sending $\frac{1}{p^n}$ to $t_n\in\mu_{p^n}.$}
We shall recall it  after  the proof of Lemma \ref{epsilondR},

c) $\theta_L(V)$ is defined as follows: Firstly, $\r_f(\qp,V)$ is defined as a certain subcomplex
of  the local cohomology complex $\r(\qp,V),$ concentrated in degree $0$ and $1,$ whose image in
the derived category is isomorphic to
\begin{gather} \label{rf-l} \r_f(\qp,V)\cong [\xymatrix{ D_{cris}(V) \ar[r]^<(0.1){(1-\varphi_p,1)}
& D_{cris}(V)\oplus D_{dR}(V)/D_{dR}^0(V)  }]     \end{gather} Here $\varphi_p$ denotes the usual
%geometric
Frobenius homomorphism
%(inverse of the arithmetic)
and the induced map $t(V):=
D_{dR}(V)/D_{dR}^0(V)\to H^1_f(\qp,V)$ is the exponential map $exp_{BK}(V)$ of Bloch-Kato, where we
write $\H^n_f(\qp,V)$ for the cohomology of $\r_f(\qp,V).$ Now
\be\label{Theta}\theta_L(V):\u_L\to\d_L(\r(\qp,V))\cdot \d_L(D_{dR}(V))\ee is by definition induced
from $\eta_p(V)\cdot \overline{(\eta_p(V^*(1))^*)}$ (see Remark \ref{inverse}  for the notation) - with
\begin{eqnarray}\label{eta}
\eta_p(V):& \u_{L} &\to
 \d_{L}(\mathrm{R\Gamma}_f(\Q_p,V))\d_L(t(V)),
\end{eqnarray}
arising by trivializing  $D_{cris}(V)$   in \eqref{rf-l} by the identity - followed by an
isomorphism induced by   local Tate-duality   \be \label{local-finite-dual}\r_f(\ql,V)\cong
\big(\r(\ql,V^*(1))/\r_f(\ql,V^*(1))\big)^*[-2] \ee

 and  using    $D_{dR}^0(V)=t(V^*(1))^*.$

   More explicitly, $\theta_p(V)$ is obtained from applying
the determinant functor to the following exact sequence \[\begin{split} &\xymatrix@C=0.5cm{
0\ar[r]^{ } & {\H^0(\qp,V)} \ar[r]^{ } & D_{cris}(V) \ar[r]^{ } &
D_{cris}(V)\oplus t(V) \ar[rr]^<(0.3){exp_{BK}(V)} && {\H^1(\qp,V)} \ar[r]^{ } &     }\\
&\xymatrix@C=0.5cm{ \ar[rr]^<(0.1){exp_{BK}(V^*(1))^* } && D_{cris}(V^*(1))^*\oplus
t(V^*(1))^*\ar[r]^{ } & D_{cris}(V^*(1))^* \ar[r]^{ } & {\H^2(\qp,V)} \ar[r]^{ } & 0  }
\end{split}\] which arises from joining the defining sequences of $exp_{BK}(V)$ with the dual
sequence for $exp_{BK}(V^*(1))$ by local duality \eqref{local-finite-dual}.

%Defining $\r_{/f}(\ql,V)$ as mapping cone \be \label{r-/f}\xymatrix{
%  {\r_f(\ql,V)} \ar[r]^{ } & {\r(\ql,V)} \ar[r]^{ } & {\r_{/f}(\ql,V)} \ar[r]^{ } &     }\ee
%
%  Trivializing  $D_{cris}(V),$   in
%\eqref{rf-l} by the identity induces    an isomorphism
%\begin{eqnarray}\label{eta}
%\eta_l(V):& \u_{L} &\to
% \d_{L}(\mathrm{R\Gamma}_f(\Q_l,V))\d_L(t(V)).
%\end{eqnarray}

\begin{rem} a) The $\epsilon$-conjecture may analogously be formulated using $\epsilon_R(\T)$ instead
$\epsilon_R'(\T).$ In the following we will amply switch between the two versions.

b) Since by definition of $\epsilon_{\O_L}(\T_\chi)$ we have
\begin{eqnarray*}
L\otimes_{\O_L}\epsilon'_{\O_L}(\T_\chi)&=& L\otimes_{\O_L}\left(Y_\chi\otimes_\Lambda
\epsilon'_{\La}(\T_{un})\right)\\
&=& (L\otimes_{\O_L}Y_\chi)\otimes_\La \epsilon'_{\La}(\T_{un})
\end{eqnarray*}
\eqref{conjecture} amounts to showing that \be\label{vchi}  L\otimes_{\La}\epsilon_\La
(\T_{un})=\epsilon_L(V_\chi)\ee holds, where $\Lambda$ acts on $L$ via
$\chi^{-1}\chi_{cyc}:\La(G)\to \O_L\subseteq L.$ Once we have shown \eqref{vchi} for all possible
$\chi$ as above, it follows immediately by twisting that e.g.\  $\epsilon_\La(\T_{K_\infty}(T))$
for $T=\zp(\eta)(r)$ as below satisfies the descent property \[V_\rho\otimes_\La \epsilon_\La(\T_{K_\infty}(T))=\epsilon_L(V (\rho^*))\] with $V (\rho^*):=V \otimes_{\qp} V_{\rho^*}$ for all
one-dimensional representation $V_\rho$ arising from some continuous $\rho:G\to \O_L^\times$ and
its contragredient representation $V_{\rho^*}.$
\end{rem}

Note that by \cite{serre} any $V_\chi$ as above is of the form
\[W=L(\eta \rho)(r)=Lt_{\rho\eta,r},\]
where $r$ is some integer, $\eta:G\to \O_L^\times $   is an unramified character and
$\rho:G\twoheadrightarrow G(K'_m/\qp)\to \O_L^\times$ denotes an Artin-character for some finite
  subextension $K'$ of $K/\qp$ and with $m=a(\rho)$ chosen minimal, i.e.,
$p^{a(\rho)}$ is the $p$-part of the conductor of $\rho.$

In the following we fix $\eta$ and $r$ and we set $T:=\zp(\eta)(r),$ $V:=T\otimes_{\zp}\qp$ and
\begin{equation}
\label{defomation}
\T_{K_\infty}=\T_{K_\infty}(T)= \Lambda^\sharp\otimes_{\zp} T,\end{equation} the free $\Lambda$-module on which
$\sigma\in G_{\qp}$ acts as $\bar{\sigma}^{-1}\eta\kappa^r(\sigma).$

%\subsection{The case $W=L(\eta\rho)(r) $}

Now we are going to make the map \eqref{Theta} explicit. First we describe the local cohomology
groups: \be\label{H0} \H^0(\qp,W)=\left\{
              \begin{array}{ll}
                L, & \hbox{if $r=0, \rho\eta=\1;$} \\
                0, & \hbox{otherwise.}
              \end{array}
            \right.
\ee By local Tate duality we have
 \be\label{H2} \H^2(\qp,W)\cong\H^0(\qp,W^*(1))^*=\left\{
                                                   \begin{array}{ll}
                                                     L, & \hbox{$r=1,\rho\eta=\1;$} \\
                                                     0, & \hbox{otherwise.}
                                                   \end{array}
                                                 \right.
\ee From the local Euler-Poincar\'{e}-characteristic formula one immediately obtains \be\label{H1}
\dim_L\H^1(\qp, W)= \dim_L\H^1(\qp, W^*(1))=\left\{
                                                           \begin{array}{ll}
                                                             2, & \hbox{$r=0$ or $1, \rho\eta=\1 $ ;} \\
                                                             1, & \hbox{otherwise.}
                                                           \end{array}
                                                         \right.
\ee Following the same reasoning as in \cite[lem.~ 1.3.1.]{Benois-NQD} one sees that
\begin{align*}
\label{H1f} &\H^1_f(\qp,W)\cong \left(\H^1(\qp,W^*(1))/\H^1_f(\qp,W^*(1))\right)^*=\\
&\qquad\left\{
                                                                                          \begin{array}{ll}
                                                                                            \H^1(\qp,W), & \hbox{$r\geq 2,$ or $r=1$ and $\rho\eta\neq \1;$} \\
                                                                                            \mathrm{im}\left(\lu(\qp) \otimes_{\zp} \qp \to \H^1(\qp,\qp(1))\right), & \hbox{$r=1, \rho\eta=\1;$} \\
                                                                                            \H^1(\mathbb{F}_p,\qp) & \hbox{$r=0,\rho\eta=\1;$} \\
                                                                                            0, & \hbox{$r\leq -1,$ or $r=0$ and $\rho\eta\neq\1.$}
                                                                                          \end{array}
                                                                                        \right.
\end{align*}
where the map in the second line is the Kummer map. Hence we call the cases  $r=0 \mbox{ or } 1,
\rho\eta=\1$   {\em exceptional} and all the others {\em generic}.

For the tangent space we have by \eqref{deRhamfilt} \be\label{t} t(W)=\left\{
                    \begin{array}{ll}
                      D_{dR}(W)=L, & \hbox{$r>0;$} \\
                      0, & \hbox{$r\leq 0.$}
                    \end{array}
                  \right.
\ee and
 \be\label{t*} t(W^*(1))=\left\{
                    \begin{array}{ll}
                      0, & \hbox{$r>0;$} \\
                      D_{dR}(W^*(1))=L, & \hbox{$r\leq 0.$}
                    \end{array}
                  \right.
\ee while
 \be\label{crys} D_{cris}(W)=\left\{
                              \begin{array}{ll}
                                0, & \hbox{$a(\rho)\neq 0;$} \\
                                Le_{\rho\eta,r}, & \hbox{otherwise,}
                              \end{array}
                            \right.
\ee with Frobenius action given as $\phi(e_{\rho\eta,r})=p^{-r}\rho\eta(\tau_p^{-1})e_{\rho\eta,r}.$

\subsubsection{The  case $r\geq 1$}

In this case we have  $\Gamma_L(W)=\Gamma(r)^{-1}=(r-1)!^{-1}$ and $\H^0(\qp,W)=0,$ whence
\be\label{Dcrys} 1-\phi: D_{cris}(W)\to D_{cris}(W)\ee and \be\label{exp} \exp(W):D_{dR}(W)\cong
H^1_f(\qp,W)\ee are   bijections. Thus, combined with the exact sequences
\be\label{dualexp}\xymatrix@C=0.4cm{
  0 \ar[r] &   \H^1_f(\qp, W^*(1))^* \ar[rrr]^{exp(W^*(1))^*} &&& D_{cris}(W^*(1))^* \ar[r]^{1-\phi^*} & D_{cris}(W^*(1))^* \ar[r]^{ } & \H^2(\qp,W) \ar[r] & 0
  }\ee
  and
\[\xymatrix@C=0.5cm{
  0 \ar[r] & \H^1_f(\qp,W) \ar[rr]^{ } && \H^1(\qp,W) \ar[rr]^{ } && \H^1_f(\qp,W^*(1))^* \ar[r] & 0 }\]
they induce the following isomorphism corresponding to $\theta_L(W)^{-1}:$
\begin{align*}
\d_L(D_{dR}(W)) \to\d_L(R\Gamma(\qp,W))^{-1}.
\end{align*}
In the {\it generic} case it decomposes as
\[\d_L(exp(W)): \d_L(D_{dR}(W)) \to\d_L(\H^1(\qp,W))=\d_L(R\Gamma(\qp,W))^{-1}\]
times
\[\frac{\det(1-\phi^*|D_{cris}(W^*(1))^*)}{\det(1-\phi|D_{cris}(W))}:\u_L \to \u_L,\]   which
equals
 \be \frac{\det(1-\phi|D_{cris}(W^*(1)))}{\det(1-\phi|D_{cris}(W))}=
\left\{
\begin{array}{ll}
  \frac{1-p^{r-1}\rho\eta(\tau_p)}{1-p^{-r}\rho\eta(\tau_p^{-1})}, & \hbox{if $a(\rho)=0;$} \\
   1, & \hbox{otherwise.}
  \end{array}
  \right.
 \ee

Now let $r=1$ and $\rho\eta=\1,$ i.e., we consider the {\em exceptional} case $W=\qp(1)$. As now
$\det(1-\phi|D_{cris}(W^*(1)))=0$ and the two occurrences of $D_{cris}(W^*(1))^*$ in
 \eqref{dualexp} are identified via the identity, the map $\theta_L(W)^{-1}$ is also induced by
 \eqref{Dcrys},\eqref{exp} together with the (second) exact sequence in the following commutative
 diagram
 \be\label{theta1}\xymatrix{
   0   \ar[r]^{ } & {\mathbb{Z}_p^\times \otimes\qp} \ar[d]_{\cong }^{\delta} \ar[r]^{ } & {\widehat{\mathbb{Q}_p^\times} \otimes\qp} \ar[d]^{{\delta} }_{\cong} \ar[r]^{{\color{black} -}\hat{v}\otimes\qp} &
   {\qp}
   \ar[r]^{ } & 0\phantom{,} \\
   0 \ar[r]^{ } & {\H^1_f(\qp,\qp(1))} \ar[r]^{ } & {\H^1(\qp,\qp(1))} \ar@{.>}[r]^{ } & {\H^2(\qp,\qp(1))}\ar[u]_{Tr}^{\cong} \ar[r]^{ } & 0,   }
\ee where the first two vertical maps $\delta$ are induced by Kummer theory, $v$ denotes the
normalised valuation map and the dotted arrow is defined by commutativity: I.e., $\theta_L(W)^{-1}$ arises from
\be\label{theta2}\xymatrix{ { \d_{\qp}( D_{dR}(\qp(1)))}
 \ar[r]^(0.3){exp_{\qp(1)}} & \d_{\qp}(H^1_f(\qp,\qp(1)))\cong  \d_{\qp}(R\Gamma(\qp,W))^{-1}  }
 \ee
 times
\be \label{theta3}\det(1-\phi|D_{cris}(\qp(1))) =(1-p^{-1}) . \ee
Combining \eqref{theta1},  \eqref{theta2} and \eqref{theta3} this can rephrased as follows:

\begin{prop}
The map $\theta(\qp(1))$ is just induced by the single exact sequence
\be\label{thetaall}\xymatrix{
       0 \ar[r]^{ } & {t( \qp(1))\cong\qp} \ar[rr]^{(1-p^{-1})^{-1}\exp_{\qp(1)} } && {\H^1(\qp,\qp(1))} \ar[r]^{{\color{black} -}\hat{v}\otimes\qp } & {\H^2(\qp,\qp(1))}  \ar[r]^{ } &
       0.
   }\ee
\end{prop}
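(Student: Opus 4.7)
The proposition is essentially a repackaging of the explicit description of $\theta(\qp(1))$ that has just been derived, so the plan is to trace the three ingredients and assemble them into one exact sequence. I would begin by recording what the earlier analysis already established: in the exceptional case $W=\qp(1)$ the determinantal scalar $\det(1-\phi^*|D_{cris}(W^*(1))^*)$ vanishes, so the standard four-term recipe collapses, and $\theta_L(W)^{-1}$ is induced by the exponential map $\exp_{\qp(1)}:t(\qp(1))\cong\qp\to \H^1_f(\qp,\qp(1))$ of \eqref{theta2} multiplied by the scalar $\det(1-\phi|D_{cris}(\qp(1)))=(1-p^{-1})$ coming from \eqref{theta3}, together with the identification of $\H^1/\H^1_f$ with $\H^2$ from the bottom row of the Kummer diagram \eqref{theta1}.

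The key observation is that these three data glue into a single short exact sequence. Indeed, in \eqref{theta1} the middle vertical Kummer map $\delta$ identifies $\H^1_f(\qp,\qp(1))$ with $\zp^\times\otimes\qp$ and the cokernel $\qp^\times/\zp^\times\otimes\qp$ with $\qp$, and the normalisation \eqref{normalisation} of the trace isomorphism $\H^2(\qp,\qp(1))\cong\qp$ forces the induced quotient map $\H^1(\qp,\qp(1))\to\H^2(\qp,\qp(1))$ to agree with $-\hat v\otimes\qp$ (this is where the sign is locked in). Composing $\exp_{\qp(1)}$ with the inclusion $\H^1_f\hookrightarrow\H^1$ gives the exact sequence
\[0\to t(\qp(1))\xrightarrow{\exp_{\qp(1)}}\H^1(\qp,\qp(1))\xrightarrow{-\hat v\otimes\qp}\H^2(\qp,\qp(1))\to 0,\]
and absorbing the factor $(1-p^{-1})$ as a rescaling of the leftmost map produces exactly the sequence \eqref{thetaall}.

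Finally, I would verify that applying $\d_{\qp}$ to this single exact sequence reproduces the composite isomorphism defining $\theta_L(\qp(1))$ via \eqref{Theta}, i.e.\ $\eta_p(V)\cdot\overline{(\eta_p(V^*(1))^*)}$ composed with local Tate duality \eqref{local-finite-dual}. Here the main obstacle, and the only delicate point, is sign and scalar bookkeeping: one must check that the Euler factor $(1-p^{-1})$ is inserted on the correct side (explaining the inverse in $(1-p^{-1})^{-1}\exp_{\qp(1)}$, since it sits in a sequence producing $\d_{\qp}(R\Gamma)^{-1}$ rather than $\d_{\qp}(R\Gamma)$), and that the minus sign in $-\hat v$ matches the trace normalisation \eqref{normalisation}. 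Once this bookkeeping is carried out, the proposition follows at once from the construction of $\theta$ recalled after \eqref{Theta}.
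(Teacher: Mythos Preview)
Your proposal has a genuine gap at the one point that actually requires work. You treat the identification of the map $\H^1(\qp,\qp(1))\to\H^2(\qp,\qp(1))$ with $-\hat v\otimes\qp$ as something that ``the normalisation \eqref{normalisation} forces'', but this is precisely what must be proved. The map appearing in the construction of $\theta_L(\qp(1))$ is not $-\hat v$ by definition; it is the dual $\psi^*$ of the map $\psi:\H^0(\qp,\qp)\to\H^1_f(\qp,\qp)$ obtained by identifying the two copies of $D_{cris}(\qp)$ in the dual of \eqref{dualexp}. The dotted arrow in \eqref{theta1} is this $\psi^*$, and the content of the proposition is that $\psi^*$ coincides with $-\hat v$ under the Kummer and trace identifications.

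The paper's proof carries this out: it first computes $\exp_{\qp}$ explicitly (since $t(\qp)=0$, the connecting homomorphism sends $\alpha\in D_{cris}(\qp)$ to the unramified character $\chi_\alpha$ with $\chi_\alpha(\phi)=-\alpha$), which gives $\psi(\alpha)=\chi_\alpha$. Then local Tate duality identifies $\psi^*$ with the cup product $-\cup\chi_1$, and one computes $\mathrm{tr}(\psi^*(\delta(p)))=\mathrm{tr}(\delta(p)\cup\chi_1)=-1$, which together with $v(p)=1$ shows $\psi^*=-\hat v$. Your proposal skips this entire computation and simply asserts the result; without it, the commutativity of \eqref{theta1} (and hence the proposition) is not established. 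The remaining bookkeeping you describe --- absorbing the factor $(1-p^{-1})$ and assembling the pieces into a single exact sequence --- is indeed routine, but it is not where the substance lies.
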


\begin{proof}
Since $t(\qp)=0,$ it follows directly from its definition as connecting homomorphism that
\[exp_{\qp}:\qp=D_{cris}(\qp)\to \H^1_f(\qp,\qp)\subseteq \H^1(\qp,\qp)\] sends $\alpha\in\qp$ to
the character $\chi_\alpha:G_{\qp}\to\qp,$ $ g\mapsto (g-1)c,$ where $c\in \widehat{\qp^{nr}}$
satisfies $(1-\varphi)c=\alpha,$ i.e., $\chi_\alpha(\phi)=-\alpha.$  As noted in \cite[lem.\
1.3.1]{Benois-NQD}, we thus may identify $\H^1_f(\qp,\qp)=\H^1(\fp,\qp).$ Identifying the copies of
$D_{cris}(\qp)$ (in the dual of \eqref{dualexp}) gives rise to a map
\[\psi:\qp=\H^0(\qp,\qp)\to\H^1_f(\qp,\qp),\; \alpha \mapsto \chi_\alpha.\] By local Tate duality
\[\xymatrix{
  {\H^1(\qp,\qp(1))/\H^1_f(\qp,\qp(1))} \ar[d]_{\psi^*} \phantom{mmm}\times  & {\H^1_f(\qp,\qp)}  \ar[r]^{ } & {\H^2(\qp,\qp(1))\cong \qp} \ar@{=}[d]^{ } \\
  {\phantom{/\H^1_f(\qp,\q}} {\H^2(\qp,\qp(1))}  \phantom{mmmp(1))}\times & {\H^0(\qp,\qp) }\ar[r]^{ }\ar[u]_{\psi} & {\H^2(\qp,\qp(1))\cong \qp}   }\]
we obtain for the dual map $\psi^*$ using the normalisation \eqref{normalisation}
\[\mathrm{tr}(\psi^*(\delta(p))=\mathrm{tr}(\delta(p)\cup \chi_1)=\chi_1(\phi)=-1.\] Thus the
dotted arrow in \eqref{theta1} being $\psi^*$ this diagram commutes as claimed.
\end{proof}

\subsubsection{The  case $r\leq 0$}

This case is just dual to the previous one replacing $W$ by $W^*(1).$

\subsection{The descent}\label{descent}

Let $K$ be infinite. In order to describe the descent of $\L_{K,\epsilon^{-1}}$   in
\eqref{colemaninfinite} we set
%\be\label{LTepsilon} \L_{\T,\epsilon}:=Y\otimes_\La\L_{K,\epsilon },\ee
\be\label{LT} \L_\T:=\L_{\T,\epsilon^{-1}}:=Y\otimes_\La\L_{K,\epsilon^{-1}},\ee
if the
projective left $\Lambda'$-module $Y$ (with commuting right $\La$-module structure) satisfies
$Y\otimes_\La\T_{un}\cong \T$ as $\La'$-module. As $\L_{K,\epsilon^{-1}}$ is the crucial
ingredient in the definition of $\epsilon'_\La(\T),$ the following descent diagram will be
important:

For fixed $\rho$ as before we choose $K'\subseteq K$ and $n\geq \max\{1, a(\rho)\}$ such that
$\rho$ factorises over $G_n:=G(K'_n/\qp).$ Setting $\La':=\qp[G_n]$ and
$V':=\qp[G_n]^\sharp\otimes\qp(\eta)(r)$   we first note that
\[\H^i(\qp,V')\cong \H^i(K_n',\qp(\eta)(r))\] by Shapiro's Lemma.
 Also, let $Y'$ be the
$(\La',\La)$-bimodule such that $Y'\otimes_\La\T_{un}\cong V'.$ We write $e_\chi:=\frac{1}{\#G_n}\sum_{g\in G_n} \chi(g^{-1})g$ for the usual idempotent which
induces a canonical decomposition $\Lambda'\cong\prod L_\chi$ into a product of finite extensions
$L_\chi$ of $\qp.$ In particular, for $L=L_\rho$ we have $W\cong
e_{\rho^{-1}}V'=L_\rho(\rho\eta)(r).$

Then, for $r\geq 1  $ and with $\Gamma(V'):=\bigoplus_\chi \Gamma(e_\chi V'),$ we have a
commutative diagram
\[\xymatrix{
   {Y'\otimes_\La \H^1(\qp,\T_{un})} \ar[d]_{pr_n} \ar[rr]^{-\L_{V'}=-Y'\otimes_\La\L_{K,\epsilon^{-1}}} &     &  Y'\otimes_\La \T_{un}\otimes_\La \La_{[\T_{un}, \tau_p^{-1}]} \ar[d]_{\cong}^{pr_n}   \\
  {\H^1(K_n',\qp(\eta)(r)) } & D_{dR}(V') \ar[l]_(0.4){\Gamma(V')^{-1}exp_V'} & V'\otimes_{\La'} (\La')_{[V',\tau_p^{-1}]}  \ar[l]_{ }   }\]

of $\La'$-modules as will be explained in the Appendix, Proposition \ref{commutative}.

Applying the exact functor $V_{\rho^*}\otimes_{\La'}-$ leads to the final commutative descent
diagram - at least for $W\neq \qp(1)$ -
 \be\label{descentdiag}\xymatrix{
   {Y''\otimes_\La \H^1(\qp,\T_{un})} \ar[d]_{pr_n} \ar[rrrr]^{-\L_{W}=-Y''\otimes_\La\L_{K,\epsilon^{-1}}} &     &&&  Y''\otimes_\La \T_{un}\otimes_\La \La_{[\T_{un},\tau_p^{-1}]} \ar[d]_{\cong}^{pr_n}   \\
  {\H^1(\qp,W) } & D_{dR}(W) \ar[l]_{\Gamma(W)^{-1}exp_W} &&& W\otimes_{L} L_{[W,\tau_p^{-1}]}  \ar[lll]_{\frac{\det(1-\varphi|D_{cris}(W^*(1)))}{\det(1-\varphi|D_{cris}(W ))}\cdot\epsilon_{L,\epsilon,dR}(W)^{-1}}
  }\ee
where $Y'':=V_{\rho^*}\otimes_{\La'}Y'=V_{\rho^*}\otimes_{\La}Y$ is a $(L,\La)$-bimodule. For
$W=\qp(1)$ the Euler factor in the nominator as well as the map $pr_0$ become zero, therefore we
shall instead apply a direct descent calculation in Lemma \ref{lemLmodpi} using semisimplicity and a
Bockstein homomorphism.

For the descent we need two ingredients:

\begin{itemize}
  \item the long Tor-exact sequence by applying $Y''\otimes_{\La(G)}-$ to the defining sequence
  \eqref{colemaninfinite}  for $- \mathcal{L}_{K,\epsilon^{-1}}$ and
  \item the convergent cohomological spectral sequence
  \be\label{spectralsequence} E_2^{i,j}:=\mathrm{Tor}_{-i}^\Lambda(Y'',H^j(\qp,\T_{un}))\Rightarrow \H^{i+j}(\qp,W), \ee
  which is induced from the isomorphism
  \[Y''\otimes^\mathbb{L}R\Gamma(\qp,\T_{un})\cong R\Gamma(\qp,Y''\otimes_\Lambda\T_{un}),\] proved in \cite{fukaya-kato} and using $W\cong Y''\otimes\T_{un}$,
\end{itemize}
together with the fact \cite{ven-det} that the determinant functor is compatible with both. Since
for $\T=\T(T):= \Lambda^\sharp\otimes_{\zp}T\cong Y\otimes_\Lambda \Tun$ \be
\H^i(\qp,\T)\cong\left\{
                   \begin{array}{ll}
                     T, & \hbox{if $i=0$ and $r=0, \eta=\1$;} \\
                     \H^1(\qp,\T)\neq 0, & \hbox{if $i=1$;} \\
                     T(-1), & \hbox{if $i=2$ and $r=1, \eta=\1$;} \\
                     0, & \hbox{otherwise.}
                   \end{array}
                 \right.
\ee we obtain for $r\geq 1$ the following exact sequence of terms in lower degree
\begin{multline}\label{lowdegree}
    \xymatrix{
      0 \ar[r]^{ } & {\mathrm{Tor}_1^\Lambda(Y'',\H^1(\qp,\T_{un}))} \ar[r]^{ } & {\H^0(\qp,W)} \ar[r]^{ } & {\mathrm{Tor}_2^\Lambda( Y'',\H^2(\qp,\T_{un}))}
      }\\
      \xymatrix{
         \ar[r]& Y''\otimes_\Lambda\H^1(\qp,\T_{un}) \ar[r]^{ } & {\H^1(\qp,W)} \ar[r]^{ } &  {\mathrm{Tor}_1^\Lambda(Y'',\H^2(\qp,\T_{un}))} \ar[r]^{ } & 0   }
\end{multline}
and
\[{\mathrm{Tor}_2^\Lambda(Y'',\H^1(\qp,\T_{un}))}=0 \mbox{ and } Y''\otimes_\Lambda\H^2(\qp,\T_{un})\cong \H^2(\qp,W)\]
 or, as $Y''\otimes^\mathbb{L}_\Lambda R\Gamma(\qp,\T_{un})\cong
V_{\rho^*}\otimes^\mathbb{L}_\Lambda\left(Y\otimes_\Lambda R\Gamma(\qp, \T_{un})\right)\cong
V_{\rho^*}\otimes^\mathbb{L}_\Lambda R\Gamma(\qp, \T),$ which is canonically isomorphic to
\begin{multline}\label{lowdegree2}
    \xymatrix{
      0 \ar[r]^{ } & {\mathrm{Tor}_1^\Lambda(V_{\rho^*},\H^1(\qp,\T))} \ar[r]^{ } & {\H^0(\qp,W)} \ar[r]^{ } & {\mathrm{Tor}_2^\Lambda( V_{\rho^*},\H^2(\qp,\T))}
      }\\
      \xymatrix{
         \ar[r]& V_{\rho^*}\otimes_\Lambda\H^1(\qp,\T) \ar[r]^{ } & {\H^1(\qp,W)} \ar[r]^{ } &  {\mathrm{Tor}_1^\Lambda(V_{\rho^*},\H^2(\qp,\T))} \ar[r]^{ } & 0   }
\end{multline}
and \[{\mathrm{Tor}_2^\Lambda(V_{\rho^*},\H^1(\qp,\T ))}=0 \mbox{ and }
V_{\rho^*}\otimes_\Lambda\H^2(\qp,\T )\cong \H^2(\qp,W).\]

In the {\em generic case} the spectral sequence boils down to the following isomorphism
\begin{align}\label{ss-generic}
Y''\otimes_\Lambda\H^1(\qp,\T_{un})\cong \H^1(\qp,W).
\end{align}

Considering the support of $\zp(1)$ one easily sees that $\mathrm{Tor}_i^\Lambda(Y'',\zp(1))=0$ for
all $i\geq 0.$ Hence the long exact Tor-sequence associated with \eqref{colemaninfinite} combined
with \eqref{localunitsinfinite} degenerates to \be\label{Tor-generic}\xymatrix{
  Y''\otimes_\Lambda \H^1(\qp,\Tun)\ar[r]^{-\L_W}_{\cong} & W\otimes_L L_{[W,\tau_p]^{-1} }  }
\ee while for all $i\geq 0$ \be
\mathrm{Tor}_i^\Lambda(Y'',\H^2(\qp,\Tun))=\mathrm{Tor}_i^\Lambda(Y'',\zp)=0. \ee

Thus the conjectured equation \eqref{conjecture} holds by \eqref{descentdiag}, \eqref{ss-generic},
 \eqref{Tor-generic} and the definition \eqref{epsTun} with $-\mathcal{L}_{K,\epsilon^{-1}}$.

For the {\em exceptional case} $W=\zp(1)$  we set $R=\Lambda(\Gamma)_\frak{p},$ where $\frak{p}$
denotes the augmentation ideal of $\Lambda(\Gamma)$ and recall that $R$ is a discrete valuation
ring with uniformising element $\pi:=1-\gamma_0,$ where $\gamma_0$ is a fixed element in $\Gamma,$
which is sent to $1$ under $\xymatrix{
  {\Gamma} \ar[r]^{\kappa} & {\mathbb{Z}_p^\times} \ar[r]^{\log_p} & {\zp }  },$ and residue field
  $R/\pi=\qp.$ The commutative diagram of homomorphisms of rings
  \[\xymatrix{
    {\Lambda=\Lambda(G) }\ar[d]_{ } \ar[r]^{ } & {\Lambda(\Gamma)} \ar[d]_{ } \ar[r]^{ } & R \ar[d]^{ } \\
    {\zp} \ar[r] &  {\zp}\ar[r]^{ } & {\qp}   }\]
induces with $Y'':=R/\pi$
\begin{eqnarray}
R\Gamma(\qp,\qp(1))&\cong& Y''\otimes_\Lambda^{\mathbb{L}} R\Gamma(\qp,\Tun(K_\infty))\label{bockstein}\\
&\cong& {\qp} \otimes_{\Lambda(\Gamma)}^{\mathbb{L}}\left(\Lambda(\Gamma)\otimes_\Lambda^\mathbb{L} R\Gamma(\qp,\Tun(K_\infty))\right)\notag\\
&\cong& {\qp} \otimes_{R}^{\mathbb{L}} R\otimes_{\Lambda(\Gamma)}\otimes_\Lambda^\mathbb{L} R\Gamma(\qp,\Tun(\mathbb{Q}_{p,\infty})) \notag\\
&\cong& {\qp} \otimes^\mathbb{L}_R R\Gamma(\qp,\Tun(\mathbb{Q}_{p,\infty}))_\frak{p}.\notag
\end{eqnarray}

In particular, the descent calculation factorises over the cyclotomic level, i.e.,
\[\epsilon'_{\qp}(\qp(1))=R/\pi\otimes_R\epsilon'_{R}(R\otimes_{\Lambda(\Gamma)}\Tun(\mathbb{Q}_{p,\infty}))\]
is induced by $\epsilon'_{R}(R\otimes_{\Lambda(\Gamma)}\Tun(\mathbb{Q}_{p,\infty}))$ which in turn
is induced by the localisation at $\frak{p}$ of the exact sequences (all for $K=\qp$)
$\eqref{col2}$ \be\xymatrix{
  {\lu(\mathbb{Q}_{p,\infty})_\frak{p}} \ar[rrr]_(0.4){\cong}^(0.4){-\mathcal{L}_{\Tun(\mathbb{Q}_{p,\infty})_\frak{p}}} &&& {\Tun(\mathbb{Q}_{p,\infty})_\frak{p}\otimes_R R_{[\Tun,\tau_p]^{-1}}}   }
\ee (this arises as long exact Tor-sequence from \eqref{colemaninfinite}) and
\eqref{localunitsfinte} \be \label{loclocalunitsfinte}\xymatrix@C=0.5cm{
  0 \ar[r] & {\lu (\mathbb{Q}_{p,\infty})_\frak{p}} \ar[rr]^{ } && {\H^1(\qp,\Tun(\mathbb{Q}_{p,\infty}))_\frak{p}} \ar[rr]^(0.45){{\color{black} -}\hat{v}} && {\qp\cong \H^2(\qp,\Tun(\mathbb{Q}_{p,\infty}))_\frak{p}}\ar[r] &
  0.
  }\ee
The latter one  arises from an analogue of the spectral sequence \eqref{spectralsequence} above -
which gives with $\mathcal{H}=G(K_\infty/\mathbb{Q}_{p,\infty})$ an exact sequence
\[\xymatrix@C=0.5cm{
  0 \ar[r] & {\H^1(\qp,\Tun(K_\infty))_\mathcal{H}} \ar[r]^{ } & {\H^1(\qp,\Tun(\mathbb{Q}_{p,\infty}))} \ar[r]^(0.7){} & {\H^2(\qp,\Tun(K_\infty))^\mathcal{H} }\ar[r] & 0,
  }\]
  and
  \[ \H^2(\qp,\Tun(K_\infty))_\mathcal{H}=\H^2(\qp,\Tun(\mathbb{Q}_{p,\infty})),\]
 - combined with \eqref{localTate}    and an identification of $\H^2(\qp,\Tun(K_\infty))^\mathcal{H}=\zp$ with
 \linebreak
 $\H^2(\qp,\Tun(K_\infty))_\mathcal{H}=\zp$ induced by the base change of $\mathrm{can}_{\zp}$. Indeed, it is easy to check that the long exact $\mathcal{H}$-homology
($=\mathrm{Tor}^\Lambda_i(\Lambda(\Gamma),-)$) sequence associated with \eqref{colemaninfinite}
recovers \eqref{col2}, in particular $\H^1(\qp,\Tun(K_\infty))_\mathcal{H}\cong
\lu(K_{\infty})_H\cong \lu(\mathbb{Q}_{p,\infty}).$ Moreover, the composite
\[\tilde{\beta}:
 {\H^1(\qp,\Tun(\mathbb{Q}_{p,\infty}))} \to {\H^2(\qp,\Tun(K_\infty))^\mathcal{H} } = {\H^2(\qp,\Tun(K_\infty))_\mathcal{H}} = {\H^2(\qp,\Tun(\mathbb{Q}_{p,\infty}))   }\]
is via   restriction and taking $G(K_\infty^{\mathcal{H}'}/\mathbb{Q}_{p,\infty})$-invariants by
construction induced by  the Bockstein homomorphism $\beta$ associated to the exact triangle in the
derived category
\[\xymatrix{
  R\Gamma(\qp,\Tun(K_\infty)) \ar[r]^{1-h_0} & R\Gamma(\qp,\Tun(K_\infty))\ar[r]^{ } & R\Gamma(\qp,\Tun(K_\infty^{\mathcal{H}'})),   }\]
  where $\mathcal{H}'$ is the maximal pro-$p$ quotient of $\mathcal{H}$ and $h_0$ is the image of
  $\phi$. By \cite[lem.\ 5.9]{flach-survey} (and the argument following directly afterwards using  the projection
  formula for the cup product) it follows that $\tilde{\beta}$ is given by the cup product $\theta\cup -$ where \[\theta:G_{\mathbb{Q}_{p,\infty}} \twoheadrightarrow \mathcal{H}'\cong\zp\]
is the unique character such that   $h_0$ is sent to $1$ under the second isomorphism. Using our
above convention of the trace map \eqref{normalisation} one finds according to \cite[ch.\ II, \S
1.4.2]{kato-lnm} that the above composite equals $-\hat{v}$. Indeed
\[\mathrm{tr}(\tilde{\beta}(\delta(p)))=\mathrm{tr} (\theta\cup \delta(p))=-\theta(\phi)=-\theta(h_0)=-1.\]

 Now consider the element
\[u:=(1-\epsilon_n^{-1})_n\in \projlim n  {(\qp(\mu_{p^n})^\times)^\wedge}\cong\H^1(\qp,\Tun(\mathbb{Q}_{p,\infty}))\]
and its image $u_\frak{p}$ in  $\H^1(\qp,\Tun(K_\infty))_\frak{p}.$
\begin{lem}\label{lemLmodpi}
$\H^1(\qp,\Tun(K_\infty))_\frak{p}\cong R u_\frak{p}$ is a free $R$-module of rank one and
$\mathcal{L}_{\Tun(\mathbb{Q}_{p,\infty})_\frak{p}}$ induces modulo $\pi$ an canonical isomorphism
\be\label{Lmodpi}\xymatrix{
  t(\qp(1))  & {\lu(\mathbb{Q}_{p,\infty})_\frak{p}/\pi} \ar[l]_{-\mathcal{L}_{\qp(1)}}\ar[r]^{} &  {\qp} \ar[r]^{ } & {\H^1(\qp,\Tun(K_\infty))_\frak{p}/\pi }
  }\ee which sends $(1-p^{-1})e\in \qp e=t(\qp(1))$ to $\bar{u},$ the image of $u_\frak{p}$ (but
  which is of course not induced by the map ${\lu (\mathbb{Q}_{p,\infty})_\frak{p}}  \to {\H^1(\qp,\Tun(K_\infty))_\frak{p}}
  $ as the latter  map becomes trivial modulo $\pi$ !).
\end{lem}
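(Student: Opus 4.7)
The plan is to prove the three assertions of the lemma in order: first the freeness of $\H^1(\qp,\Tun(K_\infty))_\frak{p}$ as an $R$-module of rank one, then that $u_\frak{p}$ is a generator, and finally the explicit identification $(1-p^{-1})e\leftrightarrow\bar{u}$. For the first, I observe that $\zp(1)$ viewed as $\La(\Gamma)$-module has annihilator generated by $\gamma_0-\kappa(\gamma_0)$; since this augments to $1-\kappa(\gamma_0)\in p\zp^\times\smallsetminus\{0\}$, the prime $\frak{p}$ does \emph{not} contain the annihilator, whence $\zp(1)_\frak{p}=0$. Localising Coleman's sequence \eqref{col2} for $K=\qp$ at $\frak{p}$ therefore collapses to an isomorphism $-\mathcal{L}_{K,-\epsilon}\colon\lu(\mathbb{Q}_{p,\infty})_\frak{p}\xrightarrow{\cong}R$, and localising \eqref{localunitsfinte} produces the short exact sequence $0\to R\to\H^1(\qp,\Tun(\mathbb{Q}_{p,\infty}))_\frak{p}\to R/\pi\to 0$. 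Up to $R$-isomorphism only two extensions fit: the non-split one (isomorphic to $R$) or the split $R\oplus R/\pi$. To rule out the split case I would apply the derived base change \eqref{bockstein}; its hypercohomology spectral sequence yields
\[0\to\H^1(\qp,\Tun(\mathbb{Q}_{p,\infty}))_\frak{p}/\pi\to\H^1(\qp,\qp(1))\to\Tor_1^R(R/\pi,R/\pi)\to 0.\]
Since $\Tor_1^R(R/\pi,R/\pi)$ is one-dimensional over $\qp$ and $\dim_\qp\H^1(\qp,\qp(1))=2$ by \eqref{H1}, this forces $\dim_\qp\H^1_\frak{p}/\pi=1$, incompatible with the split case. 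Hence $\H^1_\frak{p}\cong R$ is free of rank one, with $\lu_\frak{p}=\pi\H^1_\frak{p}$ inside it.

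For the second point, I compute $v_{L_n}(1-\epsilon_n^{-1})=1$ at every level $L_n=\qp(\mu_{p^n})$ (since $1-\epsilon_n^{-1}$ is a uniformiser), so $\hat{v}(u)=1\in\zp$ and hence $-\hat{v}(u_\frak{p})=-1\neq 0$ in $\qp$. Therefore $u_\frak{p}\notin\lu_\frak{p}=\pi\H^1_\frak{p}$, and by Nakayama's lemma $u_\frak{p}$ generates $\H^1_\frak{p}=R u_\frak{p}$.

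For the third point, all four objects of the displayed diagram are one-dimensional $\qp$-vector spaces, so the composite $t(\qp(1))\to\H^1_\frak{p}/\pi$ is automatically an isomorphism, and the content is to pin down the image of one explicit element. Writing $\lu_\frak{p}=\pi\H^1_\frak{p}\subset\H^1_\frak{p}=Ru_\frak{p}$, the composite sends a class in $t(\qp(1))$ to the corresponding Bockstein class in $\H^1_\frak{p}/\pi$, i.e., the calculation reduces to computing the unit $\xi\in R^\times$ defined by $-\mathcal{L}_{K,-\epsilon}(\pi u_\frak{p})=\pi\xi$ and reducing modulo $\pi$. The main obstacle is this last step: it requires Coleman's classical explicit reciprocity or equivalently Berger's formula \cite{berger-exp}, which identifies the reduction of $\mathcal{L}_{K,-\epsilon}$ modulo $\pi$ with the composition of the Bloch--Kato exponential and the Euler factor $\det(1-\varphi\mid D_{\mathrm{cris}}(\qp(1)))=1-p^{-1}$. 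Careful bookkeeping of the $-\epsilon$ sign convention of \eqref{col2bis}--\eqref{colemaninfinitebis} then pins down the constant $1-p^{-1}$ and completes the identification $(1-p^{-1})e\leftrightarrow\bar{u}$.
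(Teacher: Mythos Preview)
Your treatment of the first two assertions is correct and in fact organises the argument somewhat more cleanly than the paper does. The paper establishes freeness of $\H^1_\frak{p}$ only \emph{after} the explicit Coleman computation, via Nakayama; you instead deduce it up front from the spectral sequence for $R/\pi\otimes^{\mathbb L}R\Gamma(\qp,\Tun)_\frak{p}$, which forces $\dim_{\qp}\H^1_\frak{p}/\pi=1$ and rules out the split extension. (This can be shortened further: the same spectral sequence in total degree $0$ gives $\H^1_\frak{p}[\pi]\hookrightarrow\H^0(\qp,\qp(1))=0$, so $\H^1_\frak{p}$ is torsion-free over the DVR $R$, hence free.) Your argument that $u_\frak{p}$ generates, via $\hat v(u)=1$, is essentially the paper's.

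The third step, however, has a genuine gap. First a small point: the equation $-\mathcal{L}_{K,-\epsilon}(\pi u_\frak{p})=\pi\xi$ cannot hold with $\xi\in R^\times$, since $\pi u_\frak{p}$ generates $\lu_\frak{p}$ and $-\mathcal{L}$ is an isomorphism $\lu_\frak{p}\cong R$; the image is therefore a unit, not $\pi$ times one. More seriously, the appeal to ``Berger's formula'' or explicit reciprocity is precisely what is unavailable here: those descent results (diagram \eqref{descentberger}) are stated for levels $n\ge1$, and the version incorporating Euler factors (diagram \eqref{descentdiag}) degenerates for $W=\qp(1)$ because the \emph{other} Euler factor $\det(1-\varphi\mid D_{cris}(\qp))=0$ vanishes. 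The whole purpose of this lemma in the paper is to supply a replacement for that degenerate descent.

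The paper's actual computation is elementary and bypasses all of this. One notes that $\pi u_\frak{p}=(1-\gamma_0)u_\frak{p}$ corresponds (up to sign) to the norm-compatible system $u^{\gamma_0-1}$, whose Coleman power series with respect to $-\epsilon$ is written down explicitly as $g(X)=\frac{(1+X)^{\kappa(\gamma_0)}-1}{X}$. Evaluating $\mathcal{L}(g)=(1-\varphi/p)\log g$ at $X=0$ then gives $(1-p^{-1})\log\kappa(\gamma_0)=1-p^{-1}$, using the normalisation $\log_p\kappa(\gamma_0)=1$. This single evaluation pins down the constant; no reciprocity law is invoked.
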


\begin{proof}
First note that $\bar{u}$ is mapped under the natural inclusion
$\H^1(\qp,\Tun(\mathbb{Q}_{p,\infty}))_\frak{p}/\pi\subseteq  \H^1(\qp,\qp(1))$ to the image of $p$
under the isomorphism $(\mathbb{Q}_p^\times)^\wedge\otimes \qp\cong  \H^1(\qp,\qp(1))$ of Kummer
theory, because $p$ is the image of the elements $1-\epsilon^{-1}_n$ under the norm maps. In
particular, $\bar{u}$ is nonzero. By \eqref{localunitsfinte} the element $u^{\gamma_0-1}$ belongs
to $\lu(\mathbb{Q}_{p,\infty}).$ In order to calculate the image of the class
$\overline{u^{\gamma_0-1}}$ of $u^{\gamma_0-1}_\frak{p}$ modulo $\pi$ under $-\mathcal{L}_{\qp(1)}$
we note that \[g(X)=g_{u^{\gamma_0-1},-\epsilon}(X)=\frac{(1+X)^{\kappa(\gamma_0)}-1}{X}\equiv
\kappa(\gamma_0)  \mbox{ mod } (X),\] whence we obtain from setting $X=0$ in
$-\mbox{\eqref{colpowerseries}}$ that
\[-(1-p^{-1})=-(1-p^{-1})\log(\kappa(\gamma_0))=-\mathrm{Col}_{-\epsilon}(u^{\gamma_0-1})\cdot 1\] equals the image of
$\overline{u^{\gamma_0-1}}$ in $\qp=R/\pi \otimes_{\Lambda(\Gamma)} \Tun(\mathbb{Q}_{p,\infty})
\otimes_{\Lambda(\Gamma)}  {\Lambda(\Gamma)}_{[\Tun,\tau_p]^{-1}}$ under $-\mathcal{L}_{\qp(1)}$. In particular,
$\overline{u^{\gamma_0-1}}$ is a basis of ${\lu(\mathbb{Q}_{p,\infty})_\frak{p}/\pi},$ which is
mapped to zero in ${\H^1(\qp,\Tun(\mathbb{Q}_{p,\infty}))_\frak{p}/\pi },$ whence  the long exact
Tor-sequence associated with \eqref{loclocalunitsfinte} induces the isomorphisms
\[\xymatrix{
  {\H^1(\qp,\Tun(\mathbb{Q}_{p,\infty}))_\frak{p}/\pi }\ar[r]^(0.7){{\color{black}-}v} & {\qp}   },\; \bar{u}\mapsto {\color{black}-}1, \mbox{ as $v(p)=1$,}\]
  and
  \[\xymatrix{
    {\qp} \ar[r]^{ } & {\lu(\mathbb{Q}_{p,\infty})_\frak{p}/\pi}  },\; 1\mapsto \overline{u^{\gamma_0-1}},
    \]  where the latter formula follows from the snake lemma.
By the first isomorphism and Nakayama's lemma the first statement is proven and therefore
\[\H^1(\qp,\Tun(\mathbb{Q}_{p,\infty}))_\frak{p}[\pi]=\lu(\mathbb{Q}_{p,\infty})_\frak{p}[\pi]=0.\]
The second claim follows now from the composition of these isomorphisms.
\end{proof}

Finally, the  exact triangle in the derived category of $R$-modules
\[\xymatrix@C=0.5cm{
  R\Gamma(\qp,\Tun(\mathbb{Q}_{p,\infty}))_\frak{p} \ar[r]^{1-\gamma_0} & R\Gamma(\qp,\Tun(\mathbb{Q}_{p,\infty}))_\frak{p}\ar[r]^{} & {\zp}\otimes^\mathbb{L}_{\Lambda(\Gamma)} R\Gamma(\qp,\Tun(\mathbb{Q}_{p,\infty}))_\frak{p} \ar[r]^{} &  }\]
combined with \eqref{bockstein} induces the Bockstein map $\beta=\theta\cup$ sitting in the
canonical exact sequence (depending on $\gamma_0$) \be\label{bocksteinexplizit}\xymatrix{
  0   \ar[r]^{ } & {\H^1(\qp,\Tun(K_\infty))_\frak{p}/\pi }   \ar[r]^{ } & {\H^1(\qp,\qp(1))} \ar[d]_{\cong} \ar[r]^{\beta} & {\H^2(\qp,\qp(1))} \ar[r]_{ }
  \ar[d]^{\cong} & {0, }   \\
    &   & (\mathbb{Q}_p^\times)^\wedge\otimes \qp \ar[r]^{\log_p} & {\qp}   &     }
\ee where $\theta$ denotes the composite $\xymatrix@C=0.5cm{
  G_{\qp} \ar[r]^{\kappa} & {\mathbb{Z}_p^\times} \ar[r]^{\log_p} & {\zp}   }$ considered as element in  $\H^1(\qp,\zp),$
  see \cite[lem.\ 5.7-9]{flach-survey}, \cite[\S 3.1]{burns-ven1} and \cite[\S 5.3]{bf4} and for the commutativity of the square \cite[Ch.\ II,1.4.5]{kato-lnm}. The right zero in the upper line comes from $\H^3(\qp,\Tun(K_\infty))_\frak{p}[\pi]=0$.
Combining with \eqref{Lmodpi} it follows that $\epsilon'_{\qp}(\qp(1))$ is induced from the exact
sequence
\be\label{epsilonqp1} \xymatrix{
  0   \ar[r]^{ } & t(\qp(1))\cong\qp   \ar[rr]^{(1-p^{-1})^{-1}\mathrm{im}(p) } && {\H^1(\qp,\qp(1))}  \ar[r]^{\beta=\log_p} & {\H^2(\qp,\qp(1))} \ar[r]_{ }
   &  0, }   \ee
which does {\em not} coincide at all with \eqref{thetaall} (not even up to sign), nevertheless they
induce the same map on determinants: both induce a map
\[{\d_{\qp}(\H^1(\qp,\qp(1)))}\to \d_{\qp}(\H^2(\qp,\qp(1)))\otimes\d_{\qp}(t(\qp(1))\cong \d_{\qp}(\qp)\otimes\d_{\qp}(\qp)\]
sending $(1-p^{-1})^{-1}\exp(1)\wedge
{\color{black}-}\mathrm{im}(p)=(1-p^{-1})^{-1}\mathrm{im}(p)\wedge \exp(1)$ to $  1\wedge 1.$ This
completes the proof in the exceptional case.

For $r\leq 0$ one has symmetric calculations - at least in the generic case - using a descent
diagram analogous to \eqref{descentdiag} except   the left map on the bottom being now induced by
the dual Bloch-Kato exponential map $ \Gamma(V) exp_{V^*(1)}^*$ as indicated in
\eqref{descentberger2} (left to the reader). The exceptional case can be dealt with  by using the
following duality principle (generalised reciprocity law) as follows.

Let $\T$ be a free $R$-module of rank one with compatible $G_{\qp}$-action as above. Then
\[\T^*:=\Hom_{R}(\T,R)\] is a free $R^\circ$-module of rank one - for the action
$h\mapsto h(-)r,$ $r$ in the opposite ring $  R^\circ$ of $R$ - with compatible $G_{\qp}$-action
given by $h\mapsto h\circ \sigma^{-1}$. Recall that in Iwasawa theory we have the canonical
involution $\iota:\Lambda^\circ\to \Lambda$, induced by $g\mapsto g^{-1},$ which allows to consider
(left) $\Lambda^\circ$-modules again as (left) $\Lambda$-modules, e.g.\ one has $\T^*(T)^\iota\cong
\T(T^*)$ as $(\Lambda, G_{\qp})$-module, where $M^\iota:=\Lambda\otimes_{\iota,\Lambda^\circ}M$
denotes the $\Lambda$-module with underlying abelian group $M,$ but on which $g\in G$ acts as
$g^{-1}$ for any $\Lambda^\circ$-module $M.$

Given $\epsilon'_{R^\circ,-\epsilon}(\T^*(1))$ we may  apply the dualising functor $-^*$ (compare \
B.j) in   Appendix \ref{determinants}) to obtain an isomorphism
\[\epsilon'_{R^\circ,-\epsilon}(\T^*(1))^*:(\d_{R^\circ}(R\Gamma(\qp,\T^*(1) ))_{\widetilde{R^\circ}})^*(\d_{R^\circ}(\T^*(1) )_{\widetilde{{R^\circ}}})^* \to\u_{\widetilde{{R^\circ}}},\]
while the local Tate duality isomorphism \cite[\S 1.6.12]{fukaya-kato}
\[\psi(\T): R\Gamma(\qp,\T)\cong R\Hom_{R^\circ}(R\Gamma(\qp,\T^*(1)),R^\circ)[-2]\]   induces an
isomorphism
\begin{align}
\overline{\d_R(\psi(\T))_{\widetilde{R }}}^{-1}:&\left((\d_{R^\circ}(R\Gamma(\qp,\T^*(1)
))_{\widetilde{R^\circ}})^*\right)^{-1}\cong\\
&\d_R(R\Hom_{R^\circ}(R\Gamma(\qp,\T^*(1)),R^\circ))_{\widetilde{R }}^{-1} \to
\d_R(R\Gamma(\qp,\T))_{\widetilde{R }}^{-1}\notag\end{align} where we use the notation of Remark
\ref{inverse}. Consider the product
\[\epsilon'_{R,\epsilon}(\T)\cdot \epsilon'_{R^\circ,-\epsilon}(\T^*(1))^*\cdot \overline{\d_R(\psi(\T))_{\widetilde{R }}}^{-1}:\d_R(\T(-1))_{\widetilde{R }}\cong\d_R(\T^*(1)^*)_{\widetilde{R }}\to\d_R(\T)_{\widetilde{R }}\]
and the isomorphism $\xymatrix{   {\T(-1)}   \ar[r]^{\cdot\epsilon} & \T   }$   which sends
$t\otimes
  \epsilon^{\otimes -1}$ to $t.$

\begin{prop}[Duality]\label{duality}
Let $\T$ be as above such that $\T\cong Y\otimes_\Lambda\Tun$ for some $(R,\Lambda)$-bimodule $Y,$
which is projective as $R$-module. Then
\[\epsilon'_{R,\epsilon}(\T)\cdot \epsilon'_{R^\circ,-\epsilon}(\T^*(1))^*\cdot \overline{\d_R(\psi(\T))_{\widetilde{R }}}^{-1}=\d_R\left(\xymatrix{
  {\T(-1) }  \ar[r]^(0.6){\cdot\epsilon} & \T   }\right)_{\widetilde{R}}.\]
\end{prop}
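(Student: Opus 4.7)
By the twist invariance \eqref{twistinvariance} applied to the bimodule $Y$, together with the compatibility of local Tate duality and the dualisation functor $(-)^*$ with base change, each of the three factors on the left--hand side is obtained by tensoring the corresponding universal object over $(\Lambda,\Tun)$ with $Y$. The statement is therefore reduced to the universal case $(R,\T)=(\La,\Tun)$; here $\Tun^*(1)$ is a free rank one $\La^\circ$-module, and the construction of \S\ref{secdet} produces $\epsilon'_{\La^\circ,-\epsilon}(\Tun^*(1))$ from the analogue of the Coleman sequence \eqref{colemaninfinitebis} for $\Tun^*(1)$, with basis $-\epsilon$ in place of $\epsilon$.

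The next step is to unwind the three factors on the left--hand side in this universal setting. The factor $\epsilon'_{\La,\epsilon}(\Tun)$ is by definition assembled from \eqref{colemaninfinitebis}, \eqref{localTate}, \eqref{localunitsinfinite}, and the canonical trivialisations $\mathrm{can}_{\zp(1)}$, $\mathrm{can}_{\zp}$ from Lemma \ref{cantriv}; $\epsilon'_{\La^\circ,-\epsilon}(\Tun^*(1))^*$ is the analogous construction for $\Tun^*(1)$, then dualised; and $\overline{\d_R(\psi(\Tun))_{\widetilde{R}}}^{-1}$ is induced by local Tate duality $\psi(\Tun)$. After multiplying these three together, what remains to check is that the pieces coming from the Galois cohomology cancel out and that only the canonical isomorphism $\Tun(-1)\xrightarrow{\cdot\epsilon}\Tun$ survives. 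This cancellation reduces to three compatibilities under local Tate duality: (i) the Kummer identification \eqref{localunitsinfinite} for $\Tun$ is dual to that for $\Tun^*(1)$; (ii) the trace normalisation \eqref{normalisation} makes $\mathrm{can}_{\zp(1)}$ dual to $\mathrm{can}_{\zp}$; (iii) the Coleman map $-\mathcal{L}_{K,-\epsilon}$ entering in $\epsilon'_{\La,\epsilon}(\Tun)$ is dual, via $\psi(\Tun)$, to the Coleman map $-\mathcal{L}_{K,\epsilon}$ used in the construction of $\epsilon'_{\La^\circ,-\epsilon}(\Tun^*(1))$.

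The structural content of the proposition is thus the self--duality of the Coleman exact sequence under the combined pairing given by Kummer theory and local Tate duality; the appearance of $\cdot\epsilon$ on the right--hand side reflects the canonical identification $\Tun^*(1)^*\cong\Tun(-1)\to\Tun$ induced by the chosen basis $\epsilon$ of $\zp(1)$. The principal obstacle, and the real technical work, lies in keeping track of signs: the sign swap $\epsilon\leftrightarrow-\epsilon$, the sign in \eqref{normalisation} that introduces $-\hat v$, and the sign produced by the dualisation functor of Appendix \ref{determinants}\,B.j) must combine consistently. Since the paper explicitly fixes sign ambiguities left open in \cite{kato-lnmII}, item (iii) above is precisely the reason the statement involves $\epsilon'_{R^\circ,-\epsilon}(\T^*(1))$ rather than $\epsilon'_{R^\circ,\epsilon}(\T^*(1))$, and getting this opposite--sign choice to propagate correctly through all the determinant identifications is where the bulk of the verification lies.
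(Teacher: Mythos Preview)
Your first step---reducing by base change to the universal pair $(\Lambda,\Tun)$---matches the paper exactly, and your observation that both sides then differ by an element of $K_1(\widetilde\Lambda)$ is implicit in the paper's argument as well. But from that point on the two proofs diverge sharply.

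The paper does \emph{not} attempt a direct verification at the universal level. Instead it exploits the fact that $G$ is abelian, so
\[
SK_1(\widetilde\Lambda)=\ker\Bigl(K_1(\widetilde\Lambda)\to\prod_{\rho\in\mathrm{Irr}(G)}K_1(\widetilde{L_\rho})\Bigr)=1,
\]
and hence it suffices to check the identity after specialising at every Artin character $\rho$. At each such specialisation the claim becomes a statement about $(L,V(\rho))$, and this is exactly the duality property of the de Rham $\epsilon$-isomorphism already proved by Fukaya--Kato \cite[prop.\ 3.3.8]{fukaya-kato}. (The paper can invoke this because the descent property $\epsilon'_L(V(\rho))=\Gamma_L(V(\rho))\epsilon_{L,\epsilon,dR}(V(\rho))\theta_L(V(\rho))$ has already been established for the relevant generic twists before this proposition is stated.) So the paper's proof is essentially a two-line reduction plus a citation.

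Your proposed direct route---verifying that the Coleman map $-\mathcal{L}_{K,-\epsilon}$ is dual, under local Tate duality $\psi(\Tun)$, to the Coleman map appearing in $\epsilon'_{\Lambda^\circ,-\epsilon}(\Tun^*(1))$---is not merely a sign check as you suggest. Your compatibility (iii) is, in substance, an explicit reciprocity law of Perrin-Riou/Colmez type relating the Iwasawa-theoretic Coleman map to its Kummer dual; this is a genuinely deep statement, and the paper's whole strategy is designed to avoid proving it directly. What makes the paper's argument work is that the hard analytic input (Berger's explicit formulae, encoded in the descent diagram \eqref{descentdiag}) has already been absorbed into the verification of the interpolation property, so duality can be read off from the known duality of $\epsilon_{dR}$ at each finite level. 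Your outline, by contrast, would have to reprove that analytic content from scratch.
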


\begin{proof}
First  note that the statement is stable under applying $Y'\otimes_R-$ for some $(R',R)$-bimodule
$Y'$, which is projective as a $R'$-module by the functoriality of local Tate duality and the lemma
below. Thus we are reduced to the case $(R,\T)=(\Lambda,\T(T))$ where $T=\zp(r)(\eta)$ is generic.
Since the morphisms between $\d_R(\T(-1))_{\widetilde{R }}$ and $\d_R(\T)_{\widetilde{R }}$ form a
$K_1(\widetilde{\Lambda})$-torsor and the kernel
\[SK_1(\widetilde{\Lambda}):=\ker\left(K_1(\widetilde{\Lambda})\to \prod_{\rho\in{\rm{Irr}}({G})} K_1(\widetilde{L_\rho})\right)=1\]
is trivial, as $G$ is abelian, it suffices to check the statement for all $(L, V(\rho)),$ which is
nothing else than the content of \cite[prop.\ 3.3.8]{fukaya-kato}. Here $ {\rm{Irr}}({G})\ $
denotes the set of $ \overline{\mathbb{Q}}_p$-valued irreducible representations of $ {G}$ with
finite image.
\end{proof}

\begin{lem}
Let  $Y$ be a $(R',R)$-bimodule such that $Y\otimes_R\T\cong \T'$ as $(R',G_{\qp})$-module and let
$Y^*=\Hom_{R'}(Y,R')$ the induced $(R'^\circ,R^\circ)$-bimodule. Then there is a natural
\begin{enumerate}
\item  equivalence of functors \[Y\otimes_R\Hom_{R^\circ}(-, R^\circ)\cong \Hom_{R'^\circ}(Y^*\otimes_{R^\circ}
-,R'^\circ)\] on $P(R^\circ).$
\item isomorphism $Y^*\otimes_{R^\circ} \T^*\cong (\T')^*$ of $(R'^\circ,G_{\qp})$-modules.
\end{enumerate}
\end{lem}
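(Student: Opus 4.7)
The plan is to reduce both statements to the hom-tensor adjunction combined with reflexivity of $Y$ as a left $R'$-module, which is finitely generated projective in the applications of this lemma (in particular in the situation of Proposition~\ref{duality}).

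For part (i), I would first write down the natural transformation
\[\Phi_M \colon Y \otimes_R \Hom_{R^\circ}(M, R^\circ) \longrightarrow \Hom_{R'^\circ}(Y^* \otimes_{R^\circ} M, R'^\circ)\]
by $\Phi_M(y \otimes \phi)(f \otimes m) := f(y \cdot \phi(m))$, where $y \cdot \phi(m)$ uses the right $R$-action on $Y$. A short bookkeeping argument checks that $\Phi_M$ is well-defined on the balanced tensor products, respects all bimodule structures, and is natural in $M$. Next I would verify that $\Phi_{R^\circ}$ is an isomorphism: in this case the source is canonically $Y$ and the target is $\Hom_{R'^\circ}(Y^*, R'^\circ) = Y^{**}$, and $\Phi_{R^\circ}$ is the canonical double-dual map, which is an isomorphism because $Y$ is finitely generated projective over $R'$. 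Since both sides of (i) are additive in $M$ and commute with finite direct sums, and every object of $P(R^\circ)$ is a direct summand of a finite free $R^\circ$-module, a standard summand/five-lemma argument extends the isomorphism from $R^\circ$ to all of $P(R^\circ)$.

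For part (ii), the cleanest route is via the hom-tensor adjunction
\[(\T')^* = \Hom_{R'}(Y \otimes_R \T, R') \;\cong\; \Hom_R(\T, \Hom_{R'}(Y, R')) \;=\; \Hom_R(\T, Y^*).\]
Since $\T$ is free of rank one over $R$, the natural map
\[Y^* \otimes_{R^\circ} \T^* \longrightarrow \Hom_R(\T, Y^*), \qquad f \otimes \phi \mapsto \bigl(t \mapsto f \cdot \phi(t)\bigr),\]
is an isomorphism (using the right $R^\circ$-action on $Y^*$ defined by $(f \cdot r)(y) = f(yr)$, and reflexivity of $\T$). Composing yields the desired isomorphism $Y^* \otimes_{R^\circ} \T^* \cong (\T')^*$. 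The $G_{\qp}$-equivariance is then automatic from naturality: $G_{\qp}$ acts trivially on $Y$, $R$, $R'$ and $Y^*$, and by the contragredient action on $\T^*$, and each step in the chain above is natural in the $(R, G_{\qp})$-module $\T$.

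The main obstacle is not mathematical depth but bookkeeping: the left/right module structures over the non-commutative rings $R, R^\circ, R', R'^\circ$ must be tracked carefully at every step so that the tensor products are balanced correctly and the Hom-spaces carry the expected bimodule structure. In particular, the right $R^\circ$-action on $Y^*$ plays a crucial role both in defining $\Phi_M$ in (i) and in the evaluation-at-basis map in (ii). Once these conventions are pinned down, each check is routine.
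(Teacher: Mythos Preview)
Your proposal is correct and takes essentially the same approach as the paper, which simply states ``This is easily checked using the adjointness of $\Hom$ and $\otimes$.'' Your argument is a careful unfolding of precisely this: the explicit transformation $\Phi_M$, the reduction to the double-dual map for $M=R^\circ$, and the hom-tensor adjunction $(\T')^* \cong \Hom_R(\T,Y^*)$ in part~(ii) are exactly the standard ingredients the paper is alluding to.
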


\begin{proof}
This is easily checked using the adjointness of $\Hom$ and $\otimes.$
\end{proof}

\begin{prop}[Change of $\epsilon$]\label{dependence}
Let $c\in \mathbb{Z}_p^\times$ and let $\sigma_c$ be the unique element of the inertia subgroup of
$G(\mathbb{Q}_p^{ab}/\qp)$ such that $\sigma_c(\epsilon)=c\epsilon$ (in the $\zp$-module $\zp(1),$
whence written additively). Then
\[\epsilon'_{R,c\epsilon}(\T)=[\T,\sigma_c]\epsilon'_{R,\epsilon}(\T)\]
\end{prop}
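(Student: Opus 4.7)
The plan is to track explicitly how the choice of $\epsilon$ enters each ingredient of the construction of $\epsilon'_R(\T)$, and to verify that the net dependence is multiplication by $[\T,\sigma_c]$. By the twist invariance \eqref{twistinvariance} together with the identity $Y\otimes_\La [\Tun,\sigma_c]=[\T,\sigma_c]$ in $K_1(R)$, it suffices to treat the universal case $(R,\T)=(\La,\Tun)$.

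Inspecting the construction \eqref{epsTun}, the potential sources of $\epsilon$-dependence are: the Kummer isomorphism \eqref{localunitsinfinite}, the Tate-trace normalization of \eqref{localTate}/\eqref{normalisation}, the canonical trivializations $\mathrm{can}_{\zp(1)}$ and $\mathrm{can}_{\zp}$ of Lemma \ref{cantriv}, and the Coleman sequence \eqref{colemaninfinitebis} featuring $-\mathcal{L}_{K,-\epsilon}$. The trivializations depend only on the underlying $\La$-module structure, so are intrinsic. Kummer theory maps $L^\times$ to $\H^1(L,\zp(1))$ without ever trivializing $\zp(1)$, and the sign of the trace is pinned down intrinsically in \eqref{normalisation} by the cup product with $\theta$ and the normalized valuation; so both are also $\epsilon$-independent. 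Consequently the entire dependence is concentrated in the map $-\mathcal{L}_{K,-\epsilon}$, and the proposition reduces to the identity
\[
-\mathcal{L}_{K,-c\epsilon}\;=\;[\Tun,\sigma_c]\cdot\bigl(-\mathcal{L}_{K,-\epsilon}\bigr).
\]

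To prove this identity, let $\gamma_c\in\Gamma$ be the element with $\kappa(\gamma_c)=c$. Substituting $\epsilon_n^c$ for $\epsilon_n$ in the defining interpolation property $g_{u,\epsilon}^{\phi^{-n}}(\epsilon_n-1)=u_n$ of the Coleman power series yields $g_{u,c\epsilon}(Y)=g_{u,\epsilon}((1+Y)^{1/c}-1)$ in $\O_K\kl Y\kr$, i.e.\ $g_{u,c\epsilon}=\gamma_c^{-1}\cdot g_{u,\epsilon}$. Since $\varphi$ commutes with the $\Gamma$-action on $\O_K\kl Y\kr$, the operator $\mathcal{L}=(1-\varphi/p)\log$ is $\Gamma$-equivariant, and the $\O_K\kl\Gamma\kr$-equivariance of the Mahler transform yields $\mathrm{Col}_{c\epsilon}(u)=\gamma_c^{-1}\mathrm{Col}_\epsilon(u)$ in $\O_K\kl\Gamma\kr$. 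The subsequent identification $\La_{[\Tun,\tau_p]^{-1}}\cong\Tun\otimes_\La\La_{[\Tun,\tau_p]^{-1}}$ of Proposition \ref{twist}, namely $a\mapsto (1\otimes\epsilon)\otimes a$, is in addition multiplied by $c$ upon replacing $\epsilon$ by $c\epsilon$ (by $\zp$-linearity in the second factor). Combining, $\mathcal{L}_{K,c\epsilon}=(c\gamma_c^{-1})\cdot\mathcal{L}_{K,\epsilon}$, and the same computation with $-\epsilon,\,-c\epsilon$ in place of $\epsilon,\,c\epsilon$ gives the required identity. Finally, since $\sigma_c$ lies in inertia with $\kappa(\sigma_c)=c$, its image $\bar{\sigma_c}$ in $G$ equals $\gamma_c$, whence $[\Tun,\sigma_c]=\bar{\sigma_c}^{-1}\kappa(\sigma_c)=\gamma_c^{-1}c=c\gamma_c^{-1}$ by commutativity of $\La$, matching the scalar computed above.

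The main obstacle in this argument is the second step: verifying that the Kummer map and the local Tate trace carry no hidden dependence on $\epsilon$. This rests on the intrinsic characterization of the trace in \eqref{normalisation} via the valuation and cup product with $\theta$ (neither of which involves $\epsilon$), and on the fact that the Kummer map has target $\H^1(L,\zp(1))$ rather than $\H^1(L,\zp)$, so that no basis of $\zp(1)$ is implicitly chosen.
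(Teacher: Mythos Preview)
Your argument is correct, but it takes a quite different route from the paper's. The paper proceeds exactly as in the proof of Proposition~\ref{duality}: since the two sides of the asserted identity differ by an element of $K_1(\widetilde{\La})$, and since $SK_1(\widetilde{\La})=\ker\bigl(K_1(\widetilde{\La})\to\prod_\rho K_1(\widetilde{L_\rho})\bigr)$ vanishes ($G$ being abelian), it suffices to check the identity after specialising to every pair $(L,V(\rho))$. There it reduces to the classical change-of-additive-character formula for local $\epsilon$-constants (property (2) in \cite[\S3.2.2]{fukaya-kato}), which gives exactly the factor $[W,\sigma_c]$. This is short because it leverages the already-established descent property \eqref{conjecture}.

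Your approach instead works entirely at the integral Iwasawa level, tracking the $\epsilon$-dependence through the explicit construction \eqref{epsTun}. This is more self-contained and arguably more illuminating, since it pinpoints $-\mathcal{L}_{K,-\epsilon}$ as the sole carrier of the dependence and computes the scaling directly from the Coleman power series transformation $g_{u,c\epsilon}=\gamma_c^{-1}\cdot g_{u,\epsilon}$. One small point you leave implicit: in the short exact sequence \eqref{colemaninfinitebis} the surjection onto $\zp(1)$ is built from \emph{two} $\epsilon$-dependent pieces (the identification $a\mapsto(1\otimes\epsilon)\otimes a$ and the map $\pi$ ending in $c\mapsto c\epsilon$), and one should observe that these cancel, so that the surjection is in fact $\epsilon$-independent. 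Alternatively one can note that any automorphism of $\zp(1)$ has trivial class in $K_1(\La)$ by the codimension-$\geq 2$ argument underlying Lemma~\ref{cantriv}; either way the conclusion stands.
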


\begin{proof}
As in the proof of Proposition \ref{duality} this is easily reduced to the pairs $(L, V(\rho)),$
for which the statement follows from the functorial properties of $\epsilon$-constants \cite[\S
3.2.2(2)]{fukaya-kato}.
\end{proof}

Altogether we have proven the following

\begin{thm}[Kato, $\epsilon$-isomorphisms]\label{main}
Let $\T$ be   such that $\T\cong Y\otimes_\Lambda\Tun$ as $(R, G_{\qp})$-module for some
$(R,\Lambda)$-bimodule $Y$ which is projective as $R$-module, where $\La=\La(G)$ with $G=G(L/\qp)$
for any $L\subseteq \mathbb{Q}_p^{ab}$ . Then the epsilon isomorphism $\epsilon'_R(\T)$ exists
satisfying the twist-invariance \eqref{twistinvariance}, the descent property \eqref{conjecture},
the ``change of $\epsilon$'' relation \ref{dependence} and the duality relation in \ref{duality}. In
particular $\epsilon'_\Lambda(\T)$ exists for all
pairs $(\Lambda,\T),$ with $\T\cong\Lambda$ one-dimensional (free) as $\Lambda$-module.%\footnote{We
%have not cared about the dependance of $\epsilon'_R(\T)$ on $\epsilon,$ i.e., property (iii)
%$\epsilon'_{R,\sigma(\epsilon)}(\T)=[\T,\sigma]\epsilon'_{R,\epsilon}(\T)$ of \cite[conjecture
%3.4.3]{fukaya-kato} at all, but this should be easy check!}
\end{thm}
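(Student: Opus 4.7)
The plan is to assemble the universal $\epsilon$-isomorphism $\epsilon_\Lambda(\Tun)$ from its ingredients and then deduce the stated properties by base change, descent, and duality. First I would construct $\epsilon_\Lambda(\Tun)$ as in \eqref{epsTun}: by combining the modified Coleman exact sequence \eqref{colemaninfinitebis} (featuring the crucial sign $-\mathcal{L}_{K,-\epsilon}$), the identifications \eqref{localTate} and \eqref{localunitsinfinite} of local Galois cohomology of $\Tun$ with $\zp$ and $\lu(K_\infty)$, and the canonical trivialisations of $\d_\Lambda(\zp)$ and $\d_\Lambda(\zp(1))$ provided by Lemma \ref{cantriv} (applicable because both modules have $\Lambda$-codimension $\geq 2$). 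The base change \eqref{twisttilde} then yields $\epsilon'_\Lambda(\Tun)$ as in \eqref{epsTun'}, and for a general pair $(R,\T)$ with $\T\cong Y\otimes_\Lambda \Tun$ the definition $\epsilon'_R(\T) := Y\otimes_\Lambda \epsilon'_\Lambda(\Tun)$ makes twist-invariance \eqref{twistinvariance} automatic from the functoriality of tensoring with a projective bimodule.

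The bulk of the proof is the descent property \eqref{conjecture} for the Artin-type quotients $W = L(\eta\rho)(r)$. In the \emph{generic range}, i.e.\ outside $W \in \{\qp, \qp(1)\}$, I would invoke the descent diagram \eqref{descentdiag}, whose commutativity up to the Euler factor $\det(1-\varphi\mid D_{cris}(W^*(1)))/\det(1-\varphi\mid D_{cris}(W))$ and the $\epsilon_{dR}$-term follows from Berger's explicit formulas for the Perrin-Riou/Coleman map (Proposition \ref{commutative} in the Appendix). Combined with the degenerate spectral sequence \eqref{ss-generic} and the vanishing of the relevant Tor-groups in \eqref{Tor-generic}, this identifies the base change of $\epsilon_\Lambda(\Tun)$ with $\epsilon_L(V_\chi)$ as prescribed by \eqref{conjecture}.

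The main obstacle is the \emph{exceptional case} $W = \qp(1)$; the dual case $W = \qp$ will then follow from the duality Proposition \ref{duality}. Here the Euler factor vanishes and the map $pr_0$ on $\H^2$ degenerates, so \eqref{descentdiag} carries no information. The plan is to localise everything at the augmentation ideal $\mathfrak{p}\subset \Lambda(\Gamma)$, reducing the descent to the cyclotomic level via \eqref{bockstein}, and then to compute modulo the uniformiser $\pi = 1 - \gamma_0$. Lemma \ref{lemLmodpi} produces the key explicit computation: on the cyclotomic unit class $u^{\gamma_0 - 1}$, with $u = (1-\epsilon_n^{-1})_n$, the map $-\mathcal{L}_{\qp(1)}$ sends $\overline{u^{\gamma_0-1}}$ to $-(1-p^{-1})$, and $\bar u$ is a basis of $\H^1(\qp,\Tun(K_\infty))_{\mathfrak{p}}/\pi$. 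The Bockstein exact sequence \eqref{bocksteinexplizit} arising from the exact triangle in \eqref{bockstein} then identifies the connecting map with $\beta = \theta\cup - = \log_p$ on Kummer classes. Comparing the resulting exact sequence \eqref{epsilonqp1} with the Bloch--Kato description \eqref{thetaall}, one checks that although the two sequences differ as maps of modules, they induce the same morphism on determinants, completing the descent in the exceptional case.

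Finally, the duality relation (Proposition \ref{duality}) and the change-of-$\epsilon$ relation (Proposition \ref{dependence}) both follow by reduction to Artin representations $V(\rho)$: the morphism spaces between the relevant determinants are $K_1(\widetilde{\Lambda})$-torsors, and since $G$ is abelian one has $SK_1(\widetilde{\Lambda}) = 1$, so an equality in such a torsor is detected after projection to each factor $K_1(\widetilde{L_\rho}) = \widetilde{L_\rho}^\times$. For these individual Artin pieces the required identities are the classical functional-equation and change-of-$\epsilon$ properties of local $\epsilon$-constants recorded in \cite[prop.\ 3.3.8]{fukaya-kato} and \cite[\S 3.2.2(2)]{fukaya-kato}, so the compatibility lifts to the Iwasawa-theoretic setting and finishes the theorem.
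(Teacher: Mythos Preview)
Your proposal is correct and follows essentially the same approach as the paper: the paper's formal proof of the theorem is just two sentences referring back to the constructions and descent calculations already carried out in Section~\ref{Katoproof}, all of which you have accurately summarised (the universal construction via \eqref{epsTun}, the generic descent via \eqref{descentdiag} and Proposition~\ref{commutative}, the exceptional case $W=\qp(1)$ via Lemma~\ref{lemLmodpi} and the Bockstein sequence, the case $W=\qp$ via Proposition~\ref{duality}, and the reduction of Propositions~\ref{duality} and~\ref{dependence} to Artin characters using $SK_1(\widetilde{\Lambda})=1$). The one step you do not mention explicitly is the passage from the two-dimensional Lie group case (where Lemma~\ref{cantriv} is available) to arbitrary $L\subseteq\mathbb{Q}_p^{ab}$, which the paper handles in its final sentence by taking limits.
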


\begin{proof}
For $G$ a two-dimensional $p$-adic Lie group this has been shown explicitly above. The general case
follows by taking limits.
\end{proof}

We shortly indicate how this result implies the validity of a local main conjecture in this context. Here again we restrict to the universal case $\Tun$, but we want to point out that similar statements hold for general $\T $ as in the above theorem by the twisting principle, in particular it applies to $\T_E$ for the local representation given by a CM elliptic curve as in Example \ref{exCM} below.

In the situation of section \ref{secdet} - in particular $G$ is a two-dimensional $p$-adic Lie group -  let \[S:=\{\lambda\in\Lambda \mid \Lambda/\Lambda\lambda \mbox{ is finitely generated over } \Lambda(G(K_\infty/\mathbb{Q}_{p,\infty}))\} \]  denote the canonical Ore set of $\Lambda$ (see \cite{cfksv}) and similarly $\widetilde{S}$ the canonical Ore set of $\widetilde{\Lambda}.$ Fix an element $u$ of $\lu(K_\infty)=\H^1(\qp,\Tun(K_\infty))$ such that the map $\Lambda\to \H^1(\qp,\Tun(K_\infty)), 1\mapsto u,$ becomes an isomorphism after base change to $\widetilde{\Lambda}_{\widetilde{S}}$ (such ``generators'' exist according to \eqref{colemaninfinitebasechange} and Proposition \ref{twist}). Then, setting  $\mathbb{L}=-\mathcal{L}_{K,\epsilon^{-1}}$,
\[
\epsilon'_\La(\Tun) :\u_{\widetilde{\La}} \to
\d_\La(R\Gamma(\qp,\Tun))_{\widetilde{\La}}\d_\La(\Tun)_{\widetilde{\La}},\] induces a map
\be
 \u_{\widetilde{\La}} \to
\d_\La(\H^1(\qp,\Tun)/\Lambda u)_{\widetilde{\La}}^{-1}\d_\La(\H^2(\qp,\Tun) )_{\widetilde{\La}}\d_\La(\Tun/\mathbb{L}(u))_{\widetilde{\La}},\ee
such that its base change followed by the canonical trivialisations  (all arguments on the right hand side are $\tilde{S}$-torsion modules!)
\begin{align*}
 \u_{\widetilde{\La}_{\widetilde{S}}} \to
\d_\La(\H^1(\qp,\Tun)/\Lambda u)_{\widetilde{\La}_{\widetilde{S}}}^{-1}\d_\La(\H^2(\qp,\Tun) )_{\widetilde{\La}_{\widetilde{S}}}&\d_\La(\Tun/\Lambda\mathbb{L}(u))_{\widetilde{\La}_{\widetilde{S}}}\cong\\
&\d_{\widetilde{\La}_{\widetilde{S}}}(\zpnr)\d_{\widetilde{\La}_{\widetilde{S}}}(\zpnr(1))\to\u_{\widetilde{\La}_{\widetilde{S}}}
\end{align*}
equals the identity in  $\mathrm{Aut}(\u_{\widetilde{\La}_{\widetilde{S}}} )=K_1(\widetilde{\La}_{\widetilde{S}})$ by Lemma \ref{cantriv}. Let $\mathcal{E}_u$ be the element in $K_1(\widetilde{\La}_{\widetilde{S}}) $ such that
\[\mathbb{L}(u)=\mathcal{E}_u^{-1}\cdot(1\otimes\epsilon).\] Consider the connecting homomorphism $\partial$ in the exact localisation sequence
\[\xymatrix{
  K_1(\widetilde{\La}) \ar[r]^{ } & K_1(\widetilde{\La}_{\widetilde{S}}) \ar[r]^{\partial} & K_0(\widetilde{S}\mbox{-tor}) \ar[r]^{ } & 0,   }\]
where $\widetilde{S}\mbox{-tor}$ denotes the category of finitely generated $\widetilde{\La}$-modules which are $\widetilde{S}$-torsion. Then we obviously have
\[\partial(\mathcal{E}_u)=-[\Tun/\La\mathbb{L}(u)]=[\H^2(\qp,\Tun)]-[\H^1(\qp,\Tun)/\Lambda u]\]
in $K_0(\widetilde{S}\mbox{-tor}).$  Moreover one can evaluate $\mathcal{E}_u$ at Artin characters $\rho$ of $G$ as in \cite{cfksv} and derive an interpolation property for $\mathcal{E}(\rho)$ from Theorem \ref{main} by the techniques of \cite[lem.\ 4.3.10]{fukaya-kato}; this will be carried out in \cite{schmitt}. These two properties build the local main conjecture as suggested by Fukaya and Kato in a much more general, not necessarily commutative setting.  Kato  (unpublished)  has shown that $\widetilde{\La}_{\widetilde{S}}\otimes_\La\lu(K_\infty) \cong \widetilde{\La}_{\widetilde{S}}$ does hold in vast generality for $p$-adic Lie extensions.

\section{The semi-local case}

Let $F_\infty/\Q$ be a $p$-adic Lie extension with Galois group $G$ and $\nu$ be any place of
$F_\infty$ above $p$ such that $G_\nu=G(F_{\infty,\nu}/\qp)$ is the decomposition group at $\nu.$
For any free $\zp$-module $T$ of finite rank with continuous Galois action by $G_\Q$ we define the
free $\La(G)$-module
\[\T:=\T(T)_{F_\infty}:=\La(G)^\natural\otimes_{\zp} T\]
with the usual diagonal $G_\Q$-action. Similarly, we define the free $\La(G_\nu)$-module
\[\T^{loc}:=\La(G_\nu)^\natural \otimes_{\zp} T\]
with the usual diagonal $G_{\qp}$-action. Then we have the following canonical isomorphism of $(\La(G),
G_{\qp})$-bimodules
\[\T\cong \La(G)\otimes_{\La(G_{\nu})} \T^{{loc}}. \]

Thus we might define
\[\epsilon_{\La(G)}(\qp,\T):=\La(G)\otimes_{\Lambda(G_\nu)} \epsilon'_{\Lambda(G_\nu)}(\T^{loc}): \u_{\widetilde{\La(G)}} \to\d_{\La(G)}(R\Gamma(\qp,\T ))_{\widetilde{{\La(G)}}}\d_{\La(G)}(\T )_{\widetilde{{\La(G)}}}.\]

Now let $\rho:G\to GL_n(\O_L)$ be a continuous map and $\rho_\nu$ its restriction to $G_\nu,$ where
$L$ is a finite extension of $\qp.$ By abuse of notation we shall also denote the induced ring
homomorphisms $\La(G)\to M_n(\O_L)$ and $\La(G_\nu)\to M_n(\O_L)$ by the same letters. Since we
have a canonical isomorphism
\[L^n\otimes_{\rho,\La(G)}\T\cong L^n \otimes_{\rho_\nu, \La(G_\nu)}\T^{loc}\] of
$(L,G_{\qp})$-bimodules, we obtain
\begin{align*}
 L^n\otimes_{\rho,\La(G)}\epsilon_{\La(G)}&(\qp,\T)= \\
 & L^n \otimes_{\rho_\nu, \La(G_\nu)} \epsilon'_{\Lambda(G_\nu)}(\T^{loc}): \u_{\widetilde{L}} \to\d_{L}(R\Gamma(\qp,V(\rho^*) ))_{\widetilde{{L}}}\d_{L}(V(\rho^*) )_{\widetilde{{L}}},
\end{align*}
where $V=T\otimes_{\zp}\qp.$

\begin{ex}\label{exCM}
Let $E$ be a  elliptic curve defined over $\Q$ with CM by the ring of integers of an imaginary
quadratic extension $K\subseteq F_\infty$ of $\Q$ and let $\psi$ denote the Gr\"{o}ssencharacter
associated to $E.$ Then $T_E\cong \Ind^K_\Q T_\psi,$ which is isomorphic to $T_\psi\oplus
T_{\psi^c}$ as representation of $G_K.$ Here $T_\psi$ equals $\zp$ on which $G_\Q$ acts via $\psi$,
while $\psi^c$ is the conjugate of $\psi$ by complex multiplication  $c\in G(K/\Q).$

Assuming $K_{\nu}=\qp$ and setting   $\T_E:=\T,\; \T_E^{loc}:= \T^{loc}$ for $T=T_E$ as well
as $\T_\psi:=\La(G)^\natural\otimes_{\zp} T_\psi,\; \T_\psi^{loc}:=\La(G_\nu)^\natural\otimes_{\zp}
T_\psi$ we obtain
\[\T_E \cong\T_\psi\oplus \T_{\psi^c}\] as $(\La(G),G_K)$-modules and hence
\begin{eqnarray*}
\epsilon_{\La(G)}(\qp,\T_E)&=&\epsilon_{\La(G)}(\qp,\T_\psi)\epsilon_{\La(G)}(\qp,\T_{\psi^c})\\
 &=& \Lambda(G)\otimes_{\La(G_\nu)}\left(\epsilon_{\La(G_\nu)}(\qp,\T_\psi^{loc})\epsilon_{\La(G_\nu)}(\qp,\T_{\psi^c}^{loc})\right).
\end{eqnarray*}
\end{ex}

If $F$ is a number field and $F_\infty$ a $p$-adic Lie-extension of $F$ again with Galois group
$G,$ then, for a place $\frak{p}$ above $p$ and a projective $\La(G)$-module $\T$ with continuous
$G_{F_\frak{p}}$-action we define a corresponding $\epsilon$-isomorphism
\[\epsilon_{\La(G)}(F_\frak{p},\T):\u_{\widetilde{\La(G)}} \to\d_{\La(G)}(R\Gamma(F_\frak{p},\T ))_{\widetilde{{\La(G)}}}\d_{\La(G)}(\T )^{[F_\frak{p}:\qp]}_{\widetilde{{\La(G)}}}
\] to be induced from
\begin{align*}
 \epsilon_{\La(G)}(\qp,&\z[G_{\qp}]\otimes_{\z[G_{F_\frak{p}}]}\T):\\
 & \u_{\widetilde{\La(G)}} \to\d_{\La(G)}(R\Gamma(\qp,\z[G_{\qp}]\otimes_{\z[G_{F_\frak{p}}]}\T ))_{\widetilde{{\La(G)}}}\d_{\La(G)}(\z[G_{\qp}]\otimes_{\z[G_{F_\frak{p}}]}\T
)_{\widetilde{{\La(G)}}}.
\end{align*}

Finally we put
\[\epsilon_\La (F\otimes_{\Q} \qp, \T)=\epsilon_\La(\qp,\bigoplus_{\frak{p}|p}\z[G_{\qp}]\otimes_{\z[G_{F_\frak{p}}]}\T)=\prod_{\frak{p}|p}
\epsilon_\La(F_\frak{p},\T),\]  where $\frak{p}$ runs through the places of $F$ above $p.$

\section{Global functional equation}

In this section we would like to explain the applications addressed   in the introduction. In the same setting as in Example \ref{exCM} we assume that $p$ is a prime of good ordinary reduction for the CM elliptic curve $E$ and we set $F_\infty=\q(E(p)),$  as well as $G=G(F_\infty/\q)$  and $\Lambda:=\Lambda(G)$. We write $M=h^1(E)(1)$ for the motive attached to $E$ and set
$\epsilon_{p,\La}(M)=\epsilon_{\La}(\qp,\T_E).$ Using \cite{yasuda} one obtains similarly $\epsilon$-isomorphisms over $\mathbb{Q}_l$, $l\neq p$ which we call analogously
$\epsilon_{l,\La}(M)$. Finally, one can define $\epsilon_{\infty,\La}(M)$ also at the place at infinity, see \cite[conj.\ \S 3.5]{fukaya-kato} or  - with a slightly different normalisation - at the end of \cite[\S 5]{ven-BSD}, we choose the latter one. Let $S$ be the finite set of places of $\Q$ consisting of $p,\infty$ as well as the places of bad reduction of $M.$

Now, according to the Conjectures of \cite{fukaya-kato} there exists a $\zeta$-isomorphism
\[\zeta_\La(M):=\zeta_\La(\T_E): \u_\La \to \d_\La(\r_c(U,\T_E))^{-1}\] which is the global analogue of the $\epsilon$-isomorphism concerning special $L$-values (at motivic points in the sense of \cite{flach-survey2}) instead of $\epsilon$- and $\Gamma$-factors; here $\r_c(U,\T_E)$ denotes the perfect complex calculating \'{e}tale cohomology with compact support of $\T_E$ with respect to $U=\mathrm{Spec}(\z)\setminus S$. Good evidence for the existence of $\zeta_\La(M)$  is given in (loc.\ cit.) although Flach concentrates on the commutative case, i.e., he considers $\Lambda(G(F_\infty/K))$ instead of $\Lambda(G)$; from this the non-commutative version probably follows by similar techniques as in \cite{bou-ven}, but as a detailed discussion would lead us too far away from the topic of this article, we just assume the existence here for simplicity. Then we obtain the following

\begin{thm}\label{functionalequation}   There is  the functional equation
\[\zeta_\La(M)=(\overline{\zeta_\La(M)^*})^{-1} \cdot \prod_{v\in S} \epsilon_{v,\La}(M).\]
\end{thm}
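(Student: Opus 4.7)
The plan is to derive the functional equation by combining three ingredients: global Artin--Verdier duality for the Iwasawa-theoretic complex $\r_c(U,\T_E)$, the local duality Proposition \ref{duality} applied place by place at $v\in S$, and the characterising interpolation property of $\zeta_\Lambda(M)$ in terms of leading terms of Artin-twisted $L$-functions of $M$. The strategy is essentially the non-commutative analogue of Flach's derivation of the motivic functional equation from the existence of $\zeta$-isomorphisms in the commutative case, transferred to our tower $F_\infty/\Q$.

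First I would interpret $(\overline{\zeta_\Lambda(M)^*})^{-1}$: applying the dualising functor and the involution $\iota$ to $\zeta_\Lambda(M^*(1))$ produces, via Artin--Verdier global duality
\[\psi_{AV}\colon \r_c(U,\T_E)\xrightarrow{\sim} R\Hom_{\Lambda^\circ}(\r(U,\T_E^*(1)),\Lambda^\circ)[-3],\]
a morphism whose target is again $\d_\Lambda(\r_c(U,\T_E))^{-1}$, so that the two sides of the claimed equality live in the same Hom-set. Next I would use the localisation triangle
\[\r_c(U,\T_E)\to \r_f(U,\T_E)\to \bigoplus_{v\in S}\r(\Q_v,\T_E)\]
so that the comparison of $\zeta_\Lambda(M)$ with the Artin--Verdier transport of $\zeta_\Lambda(M^*(1))^*$ reduces, in the determinant category, to a product of purely local comparisons at each $v\in S$. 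At $v=p$ the local discrepancy is accounted for by $\epsilon_{p,\Lambda}(M)$ via the duality relation of Proposition \ref{duality} together with the semi-local definition from Section 3; at finite $v\neq p$ one invokes Yasuda's analogous duality statement \cite{yasuda}; and at $v=\infty$ one uses the chosen normalisation of $\epsilon_{\infty,\Lambda}(M)$ recalled from \cite[\S 3.5]{fukaya-kato} and \cite[\S 5]{ven-BSD}.

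To conclude, both sides of the proposed equality are elements of the same $K_1(\widetilde{\Lambda})$-torsor $\mathrm{Mor}(\u_{\widetilde\Lambda},\d_\Lambda(\r_c(U,\T_E))^{-1}_{\widetilde\Lambda})$, so it suffices to show that they agree after specialisation along every continuous $\rho\colon G\to GL_n(\O_L)$ with finite image, and this reduction uses the vanishing of $SK_1$ for the relevant quotient as in the proof of Proposition \ref{duality}. After such specialisation the statement becomes precisely the classical Deligne--Tate functional equation
\[L_S(M\otimes\rho^*,s)=\Big(\prod_{v\in S}\epsilon_v(M\otimes\rho^*)\Gamma_v(\cdots)\Big)\cdot L_S(M^*(1)\otimes\rho,-s),\]
matched against the interpolation property that defines $\zeta_\Lambda$ (the same property assumed in fixing $\zeta_\Lambda(M)$), together with the known interpolation of $\epsilon_{v,\Lambda}(M)$ at Artin characters established in Theorem \ref{main} for $v=p$.

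The principal obstacle is that the existence of $\zeta_\Lambda(M)$ is here taken as a hypothesis; a rigorous proof of its compatibility with Artin--Verdier duality in the genuinely non-commutative setting requires transporting the commutative results of Flach via descent, along the lines of \cite{bou-ven}, which is explicitly deferred by the author. A secondary, but delicate, difficulty is to match normalisations and signs: the change-of-$\epsilon$ relation of Proposition \ref{dependence}, the choice of $\mathrm{tr}$ in \eqref{normalisation}, the sign in front of $\mathcal{L}_{K,-\epsilon}$ used in the construction of $\epsilon_\Lambda(\Tun)$, and the normalisation of $\epsilon_{\infty,\Lambda}(M)$ must all conspire so that no spurious unit of $K_1(\widetilde\Lambda)$ is left over; verifying this bookkeeping is, in practice, where most of the work lies.
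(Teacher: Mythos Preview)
The paper does not give a self-contained proof of this theorem; it simply refers the reader to \cite[thm.\ 5.11]{ven-BSD} (with \cite[conj.\ 3.5.5]{fukaya-kato} and \cite[\S 5]{bf} as background) and records that the self-duality $M=M^*(1)$ is used. Your outline---Artin--Verdier duality to transport $\zeta_\Lambda(M^*(1))$ to the same torsor, the localisation triangle to break the comparison into local pieces, and then the local duality relations (Proposition~\ref{duality} at $p$, \cite{yasuda} at $\ell\neq p$, the archimedean normalisation at $\infty$)---is indeed the shape of the argument one finds in the cited reference, so in substance you are on the right track.

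Two points deserve sharpening. First, your localisation triangle should read $\r_c(U,\T_E)\to \r(U,\T_E)\to \bigoplus_{v\in S}\r(\Q_v,\T_E)$; the symbol $\r_f$ is reserved for Selmer-type complexes and does not fit here. Second, and more seriously, your reduction ``it suffices to check at all finite-image $\rho$ via $SK_1$-vanishing, as in Proposition~\ref{duality}'' is not free in this setting. In Proposition~\ref{duality} the group $G$ was abelian, so $SK_1(\widetilde{\Lambda})=1$ is automatic. Here $G=G(\Q(E(p))/\Q)$ is genuinely non-abelian (an extension of $G(K/\Q)$ by the abelian group $G(F_\infty/K)$), and the injectivity of $K_1(\widetilde{\Lambda})\to \prod_\rho K_1(\widetilde{L_\rho})$ is a separate statement that you would have to justify, or else bypass by first establishing the functional equation over the abelian Iwasawa algebra $\Lambda(G(F_\infty/K))$ and then inducing up as in \cite{bou-ven}. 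The paper sidesteps this by outsourcing the entire argument to \cite{ven-BSD}.
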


This result is motivated by \cite[conj.\ 3.5.5]{fukaya-kato}, for more details see \cite[thm.\ 5.11]{ven-BSD}, compare also with \cite[section 5]{bf}. Observe that we used the self-duality $M=M^*(1)$ of $M$ here.

Finally  we want to address the application towards the descent result with Burns mentioned in the introduction.  If $\omega$  denotes the Neron differential of $E,$ we obtain the
usual real and complex periods
  $\Omega_{\pm} = \mathop{\int}\limits_{\gamma^{\pm}} \omega$ by integrating along pathes $\gamma^{\pm}$ which
  generate $H_1(E(\Co),\Z)^{\pm}.$ We set   $R = \{q \,\, {\mbox{prime}}, | j(E) |_q > 1 \}\cup \{p\}$
  and let $u,w$ be the roots of the characteristic polynomial of the action of Frobenius on the Tate-module
  $T_E$ of $E$
 \[1- a_pT + pT^2 = (1 - uT) (1-wT),\;\; u \in \mathbb{Z}_{p}^{\times}.\]
  Further let   $p^{{\frak{f}}_p (\rho)}$ be the  $ p$-part of the conductor of an Artin representation $\rho,$ while
    $P_p (\rho, T) = \det (1 - Frob_p^{-1} \,\, T
  | V_{\rho}^{I_p})$   describes the Euler-factor of $\rho$ at $p.$ We also
  set   $d^{\pm} (\rho) =  \dim_\mathbb{C} V_{\rho}^{\pm}$ and denote by $\rho^*$ the contragredient representation of $\rho.$
By $e_p(\rho)  $ we denote the local $\epsilon$-factor of $\rho$ at
  $p. $ In the notation of \cite{Tate} this is
  $e_p(\rho,\psi(-x),dx_1)$ where $\psi$ is the additive character
  of $\Q_p$ defined by $x \rightarrow exp(2\pi i x)$ and $dx_1$ is
  the Haar measure that gives volume 1 to $\zp$. Moreover, we write
$R_\infty(\rho^*)$ and  $R_p(\rho^*)$ for the complex and
$p$-adic regulators of $E$ twisted by $\rho^*.$
  Finally, in order to express special values of complex $L$-functions in the $p$-adic world,
  we fix embeddings   of  $ \bar{\mathbb{Q}} $ into $ \mathbb{C}$ and $\mathbb{C}_p,$ the completion of an algebraic closure of $\Qp. $

In \cite[thm.\ 2.14]{bou-ven} we have shown that as a consequence of the work of Rubin and Yager there exists $\mathcal{L}_E\in K_1(\La_{\zp}(G)_S)$ satisfying the interpolation property
\[
\mathcal{L}_E (\rho) = \frac{L_R(E, \rho^*,
1)}{\Omega_+^{d^+ (\rho)} \Omega_-^{d^-(\rho)}}\, e_p( {\rho} )
\frac{P_p(\rho , u^{-1})} {P_p(\rho^*, w^{-1})}
u^{-{\frak{f}}_p ({\rho} )}
\]
for all Artin representations $\rho$ of $G.$ Moreover the (slightly non-commutative) Iwasawa Main Conjecture (see \cite{cfksv} or Conjecture 1.4 in loc.\ cit.) is true supposed the $\mathfrak{M}_{H}(G)$-Conjecture (see \cite{cfksv} or Conjecture 1.2 in loc.\ cit.) holds; for CM elliptic curves this conjecture is actually equivalent to the vanishing of the cyclotomic $\mu$-invariant of $E$.  In  \cite[Conj.\ 7.4/9, Prop.\ 7.8]{burns-ven2} a refined Main Conjecture has been formulated requiring the following $p$-adic BSD-type formula:

 At each Artin representation $\rho$ of $G $ (with coefficients in $L$) the leading term $\L^*_E(\rho)$ of $\L_E$ (as defined in \cite{burns-ven1}) equals
 \begin{equation}\label{padicBSD}(-1)^{{r(E)(\rho^*)}}\frac{L_{R}^*(E,\rho^*)R_p(\rho^*)}{\Omega_+^{d_+(\rho)}\Omega_-^{d_-(\rho)}R_\infty(\rho^*)}\, e_p( {\rho})
\frac{P_p(\rho, u^{-1})} {P_p(\rho^*, w^{-1})}
u^{-{\frak{f}}_p ( {\rho})}%\\
%\times \epsilon_p(\rho) u^{-{\frak{f}}_p} R_p(\rho^*)
 %\frac{P_{L,p}([\rho]_\lambda,u^{-1})}{P_{L,p}([\rho]_\lambda^*,up^{-1} ) }.
 \end{equation}
 where $L_{R}^*(E,\rho^*)$ is the leading coefficient at $s=1$ of the
$L$-function $L_R(E,\rho^*,s)$ obtained from the Hasse-Weil
$L$-function of $E$ twisted by $\rho^*$ by removing the Euler
factors at $R$.
%
%\begin{equation}
%\label{padicBSD}
%(-1)^{r(M)(\rho^*)}\frac{L_{B,\Upsilon}^*(M(\rho^*))}{\Omega_\infty(M(\rho^*))R_\infty(M(\rho^*))}\cdot\Omega_p(M(\rho^*))R_p(M(\rho^*))\cdot
% \frac{P_{L,p}(\hat{W}_\rho^*(1),1)}{P_{L,p}(\hat{W}_\rho,1) },
%\end{equation}
%where $L_{B,\Upsilon}^*(M(\rho^*))$ is the leading coefficient at
%$s=0$ of the complex $L$-function of $M(\rho^*),$ truncated by
%removing Euler factors for all primes in  $\Upsilon.$
Here the number ${r(E)(\rho^*)}$ is defined in \cite[(51)]{burns-ven2} (with $M=h^1(E)(1)$) and equals $\dim_{\mathbb{C}_p}(e_{\rho^*}(\mathbb{C}_p\otimes_{\z} E(K^{\ker(\rho)})))$ if  the Tate-Shafarevich group $\sha(E{/F_\infty^{\ker(\rho)}})$ is finite.

We write $X(E/F_\infty)$ for the Pontriagin dual of the ($p$-primary) Selmer group of $E$ over $F_\infty$.

\begin{thm}\label{burns-descent} Let $F$ be a number field contained in $F_\infty$ and assume
\begin{enumerate}
\item the $\mathfrak{M}_{H}(G)$-Conjecture,
\item that $\L_E$ satisfies the refined interpolation property \eqref{padicBSD},
\item that $X(E/F_\infty)$ is semisimple  at all
$\rho$ in $\mathrm{Irr}(G_{F/\q})$ in the sense of \cite[def.\ 3.11]{burns-ven1}.
\end{enumerate}
Then the `$p$-part' of the ETNC for $(E,\mathbb{Z}[G(F/\q)])$ is true. If, moreover, the Tate-Shafarevich group $\sha(E{/F})$ of $E$ over $F$ is finite,
then it   implies the `$p$-part' of a Birch and Swinnerton-Dyer type formula (see, for example,
\cite[\S 3.1]{ven-BSD}).
\end{thm}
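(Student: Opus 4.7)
The plan is to reduce the theorem to \cite[thm.\ 8.4]{burns-ven2}, whose statement is essentially the one we are trying to prove, and to verify that its hypotheses are now satisfied thanks to the results of the present paper. The novelty over \cite{burns-ven2} is that the local $\epsilon$-isomorphism $\epsilon_{p,\La}(M)=\epsilon_{\La}(\qp,\T_E)$, which was conjectural there, is now produced by Theorem \ref{main} via the semi-local formalism of Section 3.

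First I would construct $\epsilon_{p,\La}(M)$. By Example \ref{exCM} and the CM-hypothesis, $\T_E^{loc}$ decomposes as $\T_\psi^{loc}\oplus\T_{\psi^c}^{loc}$ of $(\La(G_\nu),G_K)$-bimodules, and each summand is of the form $Y\otimes_\Lambda\Tun$ for an appropriate projective bimodule $Y$. Hence Theorem \ref{main} provides $\epsilon'_{\La(G_\nu)}(\T_\psi^{loc})$ and $\epsilon'_{\La(G_\nu)}(\T_{\psi^c}^{loc})$; extending scalars along $\La(G_\nu)\to\La(G)$ and multiplying yields $\epsilon_{p,\La}(M)$. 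Combined with Yasuda's construction \cite{yasuda} at primes $l\neq p$ and the Archimedean $\epsilon_{\infty,\La}(M)$ recalled in Section 4, this provides the full collection of local $\epsilon$-iso\-mor\-phisms entering Theorem \ref{functionalequation}.

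Next I would assemble the refined Iwasawa Main Conjecture in the sense of \cite[conj.\ 7.4/9, prop.\ 7.8]{burns-ven2}. The existence of $\mathcal{L}_E\in K_1(\Lambda_{\zp}(G)_S)$ with the required interpolation formula is \cite[thm.\ 2.14]{bou-ven}; the non-commutative IMC of \cite{cfksv} then follows from hypothesis (i) via the main result of \cite{bou-ven}. The refined (leading-term) version at each Artin representation $\rho$ of $G$ is precisely hypothesis (ii), namely the formula \eqref{padicBSD}. The compatibility of $\mathcal{L}_E$ with the family $\{\epsilon_{v,\La}(M)\}_v$ needed to promote the IMC to its refined form is provided by the twist-invariance, descent and duality properties of $\epsilon'_\Lambda(-)$ collected in Theorem \ref{main}, together with the functional equation Theorem \ref{functionalequation}.

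With the refined IMC in hand I would invoke the descent machinery of \cite[thm.\ 8.4]{burns-ven2}. Hypothesis (iii), the semisimplicity of $X(E/F_\infty)$ at all $\rho\in\mathrm{Irr}(G_{F/\q})$, is exactly what is needed to control the Bockstein homomorphism that appears when specializing from the Iwasawa-theoretic determinant over $\Lambda$ to the finite group ring $\z[G(F/\q)]$; this is the computational heart of the descent step and the point where hypothesis (iii) enters essentially. The output is the $p$-part of ETNC for $(E,\z[G(F/\q)])$. The Birch and Swinnerton-Dyer-type formula in the case of finite $\sha(E/F)$ is then a standard consequence of ETNC by the procedure recalled in \cite[\S 3.1]{ven-BSD}. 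The main obstacle in the overall circle of ideas had always been the existence of $\epsilon_{p,\La}(M)$: once this is supplied, as it is here, the remaining verifications are formal compatibilities with the (cited) refined descent theorem.
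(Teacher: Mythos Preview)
Your approach is essentially the same as the paper's: both reduce the statement to \cite[thm.\ 8.4]{burns-ven2}, the new input being the existence of the local $\epsilon$-isomorphism $\epsilon_{p,\La}(M)=\epsilon_\La(\qp,\T_E)$ supplied here by Theorem~\ref{main} via Example~\ref{exCM} and the semi-local formalism of Section~3.

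A few small points of difference. The paper explicitly checks two hypotheses of \cite[thm.\ 8.4]{burns-ven2} that you pass over: first, that Assumption~(W) of that theorem holds ``for weight reasons''; second, that the leading-term formula \eqref{padicBSD} actually matches the formulation of \cite[Conj.\ 7.4]{burns-ven2}, which requires the semisimplicity hypothesis~(iii) together with \cite[rem.\ 7.6, prop.\ 7.8]{burns-ven2}. Conversely, your invocation of the functional equation (Theorem~\ref{functionalequation}) is not needed here and is in any case conditional on the existence of $\zeta_\La(M)$, which the paper only assumes; the descent via \cite[thm.\ 8.4]{burns-ven2} does not go through Theorem~\ref{functionalequation}.
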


For more details on the `$p$-part' of the Equivariant Tamagawa Number Conjecture (ETNC) and the proof of this result, which uses the existence of \eqref{appCM} as shown in this paper, see \cite[thm.\ 8.4]{burns-ven2}. Note that due to our semisimplicity assumption combined with \cite[rem.\ 7.6, prop.\ 7.8]{burns-ven2}  the formula \eqref{padicBSD} coincides with that of \cite[Conj.\ 7.4]{burns-ven2}. Also Assumption {\rm (W)} of thm.\ 8.4 is valid for weight reasons. Finally we note that by \cite[lem.\ 3.13, 6.7]{burns-ven1} $X(E/F_\infty)$ is semisimple  at $\rho$ if and only if the $p$-adic height pairing \[ h_p(V_p(E)\otimes \rho^*): H^1_f(\Q,V_p(E)\otimes \rho^*)\times H^1_f(\Q,V_p(E)\otimes \rho)\to L \] from \cite[\S 11]{nek} (see also \cite{schneider82} or \cite{perrin-height}) is non-degenerate, where $V_p(E)=\qp\otimes T_E$ is the usual $p$-adic representation attached to $E$. As far as we
are aware, the only theoretical evidence for non-degeneracy is a
result of Bertrand \cite{ber} that for an elliptic curve with
complex multiplication, the height of a point of infinite order is
non-zero. Computationally, however, there has been a lot of work done
recently by Stein and Wuthrich \cite{wuth}.

\appendix

\section{$p$-adic Hodge theory and $(\varphi,\Gamma)$-modules}

As before in the local situation $K$ denotes a (finite) unramified extension of $\qp.$ Let $\eta:
G_{\qp}\to \zp^\times$ (here $\zp^\times$ can also be replaced by $\O_L^\times,$ but for simplicity of
notation we won't do that in this exposition) be an unramified character and let $T_0$ be the free
$\zp$-module with basis $t_{ \eta,0}$ such that $\sigma\in G_{\qp}$ acts via $\sigma
t_{\eta,0}=\eta(\sigma)t_{\eta,0}.$ More generally, for $r\in \z,$ we consider the $G_{\qp}$-module
\[T:= T_0(r),\] free as a $\zp$-module with basis $t_{\eta,r}:=t_{\eta,0}\otimes \epsilon^{\otimes
r},$ where $\epsilon=(\epsilon_n)_n$ denotes a fixed generator of $\zp(1),$ i.e.,
$\epsilon_n^p=\epsilon_{n-1}$ for all $n\geq 1,$ $\epsilon_0=1$ and $\epsilon_1\neq 1.$ Thus we
have $\sigma(t_{\eta,r})=\eta(\sigma)\kappa^r(\sigma)t_{\eta,r}, $ where $\kappa:G_{\qp}\to
\zp^\times$ denotes the $p$-cyclotomic character. Setting $V:=\qp\otimes T=V_0(r)$  we obtain for
its de Rham filtration
\be\label{deRhamfilt}  D^i_{dR}(V)=\left\{
                   \begin{array}{ll}
                     D_{dR}(V)\cong Ke_{\eta,r} , & \hbox{$i\leq -r;$} \\
                     0, & \hbox{otherwise.}
                   \end{array}
                 \right.
\ee
where $e_{\eta,r}:= at^{-r}\otimes t_{\eta,r}$ with a unique $a=a_\eta\in\zpnr^\times,$ such that
$\tau_p(a)=\eta^{-1}(\tau_p)a,$ see \cite[thm.\ 1, p.\ III-31]{serre}. Here as usual $t=\log
[\epsilon] \in B_{cris}\subseteq B_{dR}$ denotes the $p$-adic period analogous to $2\pi i.$
Furthermore we have
\[D_{cris}(V)=K e_{\eta,r}\] with
\[\varphi(e_{\eta,r})=p^{-r}\eta^{-1}(\tau_p)e_{\eta,r}.\] If $\eta$ is trivial, we also write
$t_r$ and $e_r$ for $t_{\eta,r}$ and $e_{\eta,r},$ respectively.

Now consider  the $\O_K$-lattices \[M_0:=\O_Ke_{\eta,0}=(\zpnr\otimes_{\zp}T_0)^{G_K}\subseteq
D_{cris}(V_0)\] and \[M:=(t^{-r}\otimes \epsilon^{\otimes r})M_0=\O_Ke_{\eta,r}\subseteq
D_{cris}(V).\]

Using the variable $X=[\epsilon]-1$ we have $t=\log(1+X)$ and on the rings
\[\O_K[[X]]\subseteq B_{rig,K}^+:=\{f(X)=\sum_{k\geq 0}a_k X^k|a_k\in K,\; f(X)\mbox{ converges on }\{x\in\mathbb{C}_p| |x|_p<1\}\}\]
we have the following operations: $\varphi$ is induced by the  usual action of $\phi$ on the
coefficients and by $\varphi(X):=(1+X)^p-1,$ while $\gamma\in\Gamma$ acts trivially on coefficients
and by $\gamma(X)=(1+X)^{\kappa(\gamma)}-1$; letting $H_K=G(K/\qp)$ act just on the coefficients we
obtain a $\Lambda(G)$-module structure on $\O_K[[X]].$ Moreover, $\varphi$ has a left inverse operator
$\psi$ uniquely determined via $\varphi\circ \psi(f)=\frac{1}{p}\sum_{\zeta^p=1} f(\zeta(1+X)-1).$ The
differential operator $D:=(1+X)\frac{d}{dX}$ satisfies
   \be\label{D} D\varphi f=p\varphi Df \mbox{ and } D\gamma f=
    \kappa(\gamma)\gamma Df\ee
It is well-known \cite[lem.\ 1.1.6]{perrin94}  that $D$ induces an isomorphism of
$\O_K[[X]]^{\psi=0}$. Furthermore, setting
$\Delta_if:=D^if(0) $ for $f\in \O_K[[X]]^{\psi=0},$  we have an exact sequence (loc.\ cit., \S
2.2.7, (2.1)) \begin{align}
 \label{PR} \xymatrix{
  0 \ar[r] & t^r\otimes D_{cris}(V)^{\varphi=p^{-r}} \ar[r]^{ } & (B^+_{rig,K}\otimes_K D_{cris}(V))^{\psi=1} \ar[r]^(0.55){1-\varphi} & {\phantom{mmmmmmmm}}
   }\\
  \xymatrix{ (B^+_{rig,K})^{\psi=0}\otimes_KD_{cris}(V)
  {\ar[r]^{\Delta_r}} &(D_{cris}(V)/(1-p^r\varphi))(r) \ar[r] & 0
  }\notag
\end{align}
where $\varphi$ (and $\psi$) acts diagonally on $B^+_{rig,K}\otimes_K D_{cris}(V),$ while $D$ operates
just on the first tensor-factor.  We set \[\mathcal{D}_M:=\O_K[[X]]^{\psi=0}\otimes_{\O_K} M, \]  while by
 \[D(T)=(\mathbb{A}\otimes_{\zp}T)^{G_{K_\infty}}\] we denote the $(\varphi,\Gamma)$-module attached to $T,$
 where the definition of the ring $\mathbb{A}$ together with its $\varphi$- and $\Gamma$-action can be
 found e.g.\ in \cite{berger-exp}. Here we only recall that $\mathbb{A}_K^+\cong \O_K[[X]]$ and $\mathbb{A}_K\cong (\O_K[[X]][\frac{1}{X}])^{\wedge
 p\textrm{-adic}}$ is the $p$-adic completion of the Laurent series ring.

\begin{rem} \begin{enumerate}
\item Let $\eta$ be non-trivial.  From \cite[thm. A.3]{berger-exp} and its proof one sees immediately that for the Wach-module
$N(T_0)$ which according to (loc.\ cit., prop. A.1)   equals $\O_K[[X]]\otimes_{\O_K}M_0, $ the
natural inclusion $N(T_0)\hookrightarrow \mathbb{A}_K\otimes_{\mathbb{A}_K^+}N(T_0)$ induces an
isomorphism
\[\xymatrix{
 {(\O_K[[X]]\otimes_{\O_K}M_0)^{\psi=1}\ar[r]^(0.6){\cong}} & N(T_0)^{\psi=1}\ar[r]^(0.3){\cong} & \left( \mathbb{A}_K\otimes_{\mathbb{A}_K^+}N(T_0)\right)^{\psi=1}\cong D(T_0)^{\psi=1}
  }.\]
\item If $\eta$ is trivial, one has similarly
$N(\zp)=\mathbb{A}_K^+=\O_K[[X]] $ by (loc.\ cit., prop. A.1), whence
$N(\zp(1))=X^{-1}\mathbb{A}_K^+\otimes t_1=X^{-1}\O_K[[X]]\otimes t_1$ by the usual twist behaviour
of Wach modules. We obtain
\[ D(\zp(1))^{\psi=1}\cong N(\zp(1))^{\psi=1}=(X^{-1}\O_K[[X]]\otimes t_1)^{\psi=1}=\zp X^{-1}\otimes t_1\oplus ( \O_K[[X]]\otimes t_1)^{\psi=1},\] but  $N(\zp )^{\psi=1}\not\cong D(\zp
)^{\psi=1}$ according to (loc.\ cit., prop. A.3).
\end{enumerate}
\end{rem}

We define $\tilde{D}(\zp(r))^{\psi=1}=( \O_K[[X]]\otimes t_r)^{\psi=1}$ and $\tilde{D}(T)^{\psi=1}=
{D}(T)^{\psi=1}$ for non-trivial $\eta$ and obtain a canonical isomorphism
\be\label{tildeD}(\O_K[[X]]\otimes_{\O_K}M)^{\psi=p^r}\cong \tilde{D}(T)^{\psi=1}\ee
  induced by  multiplication with $t^r:$  \[f(X)\otimes
(oat^{-r}\otimes t_{\eta,r}) \mapsto f(X)oa\otimes t_{\eta,r},\]
where $o\in\O_K$ and $a$ is as before.

Setting $ \T_{K_\infty}:=\T_{K_\infty}(T):= \Lambda(G( K_\infty/\qp))^\sharp\otimes_{\zp}T$ we
recall that there is a canonical isomorphism due to Fontaine \be\label{fontaine}D(T)^{\psi=1}\cong
\H^1(\qp, \T_{K_\infty}),\ee    which e.g.\ is called $\{h^1_{K_n,V}\}_n$ in \cite{berger-exp} and
its inverse $\mathrm{Log}^*_{T^*(1)}$ in \cite[rem.\ II.1.4]{cher-col1999}.

 I am very grateful to Denis Benois for parts of the proof of the following proposition, which has been
 stated in a slightly different form, but without proof\footnote{As twisting with the cyclotomic character starting from $\Qp(1)$ only recovers the representations $V=\Qp(r),$ the general case with $V_0$ being non-trivial is not covered.}, in \cite[prop.\ 4.1.3]{perrin94}.

% PRoblem: bei PR $\tilde{Z}_f\cong\lu(r-1)$ statt $\H^1$ ????? vgl. Beweis von Cor 4.1.6 in
% \cite{perrin94}!

 \begin{prop}\label{twistingPR}
 \begin{enumerate}
 \item There is a canonical exact sequence   of $\O_K$-modules
 \[\xymatrix@C=0.7cm{
   0 \ar[r]^{ } & 1\otimes M^{\varphi=p^{-r}} \ar[r]^{ } & (\O_K[[X]]\otimes_{\O_K}M)^{\psi=p^r} \ar[r]^{ } & {\mathcal{D}_M}  \ar[r]^(0.3){\Delta_{M,r}} & M/(1-p^r\varphi)M \ar[r]^{} & 0,   }\]
   where the map in the middle is up to twisting induced by $1-\varphi,$ see the first diagram in the proof
   below.
\item Assume that $\eta$ is non-trivial. Then, using the isomorphisms \eqref{tildeD} and \eqref{fontaine}      we obtain the following commutative diagram of $\La(G)$-modules, in which the maps $\C(\T_{K_\infty})$ ($ =(D^{-r}\otimes t^{-r})(1-\varphi) $) and $\L_0(\T_{K_\infty}) $ are defined by the property that the  rows become isomorphic to the exact sequence
in (i)
\[\xymatrix{
  0 \ar[r]^{ } &   D(T)^{\varphi=1}\ar[d]^{\cong} \ar[r]^{ } & D(T)^{\psi=1} \ar[r]^(0.6){ \C(\T_{K_\infty})}\ar[d]^{\cong}_{\mathrm{Log}^*_{T^*(1)}} & {\mathcal{D}_M}  \ar[d]^{=}\ar[r]^(0.3){\Delta_{M,r}} & M/(1-p^r\varphi)M \ar[d]^{=}\ar[r]^{} & 0    \\
  0 \ar[r]^{ } &   {\H^1(\qp, \T_{K_\infty})_{tors}}  \ar[r]^{ } & {\H^1(\qp, \T_{K_\infty})} \ar[r]^(0.65){\L_0(\T_{K_\infty}) } & {\mathcal{D}_M}  \ar[r]^(0.3){\Delta_{M,r}} & M/(1-p^r\varphi)M \ar[r]^{} & 0 .     }\]
\item The sequence \eqref{localunitsfinte} can be interpreted in terms of $(\varphi,\Gamma)$-modules by the
 following commutative diagram
\[\xymatrix{
 0 \ar[r] & {\lu(K_\infty)} \ar[d]^{Dlog\; g_{-}}_{\cong} \ar[r]^(0.3){\delta } & {\H^1(\qp,\Tun)\cong \projlim n \widehat{K_n^\times}} \ar[d]_{\cong} \ar[r]^(0.8){-\hat{v}} & {\zp} \ar@{=}[d]_{ } \ar[r]^{ } & 0   \\
  0 \ar[r]^{ } & {\tilde{D}(\zp(1))^{\psi=1}} \ar[r]^{ } & { {D}(\zp(1))^{\psi=1}} \ar[r]^{ } & {\zp} \ar[r]^{ } & 0,
  }\] where $\delta$ denotes the Kummer map and $\hat{v}$ is induced from the normalized valuation map. Furthermore,
  we  obtain again a commutative diagram  of $\La(G)$-modules, in which the maps $ {\C}(\T_{K_\infty})$ ($ =(D^{-r}\otimes t^{-r})(1-\varphi) $) and $ {\L}_0(\T_{K_\infty}) $ are defined by the property that the rows become isomorphic to the exact sequence
in (i)
\[\xymatrix{
  0 \ar[r]^{ } &   {\tilde{D}(\zp(r))^{\varphi=1} \ar[r]^{ } }& {\tilde{D}(\zp(r))^{\psi=1}} \ar[r]^(0.6){  {\C}(\T_{K_\infty})} & {\mathcal{D}_M}  \ar@{=}[d] \ar[r]^(0.3){\Delta_{M,r}} & M/(1-p^r\varphi)M \ar[r]^{} & 0    \\
  0 \ar[r]^{ } &   {\zp(r) \ar[r]^{ }\ar[u]_{\cong}} & {\lu(K_\infty)(r-1)} \ar[u]^{\cong}_{Dlog\; g_{-}}  \ar[r]^(0.65){ {\L}_0(\T_{K_\infty})}   & {\mathcal{D}_M}  \ar[r]^(0.3){\Delta_{M,r}} &{\zp(r)} \ar[u]^{\cong}\ar[r]^{} & 0 .     }\]
Hence, using the map $\xymatrix{
 {\frak{M}}\otimes e_1: {\O_K[[\Gamma]] }  \ar[r]^(0.7){\cong} & {\mathcal{D}_M}   },$   $\lambda\mapsto \lambda \cdot (1+X)\otimes e_{1}$, where $\frak{M}$ denotes  the Mahler (or $p$-adic Mellin) transform  \cite[thm. 3.3.3]{co-su-buch-MC}, the lower sequence can be canonically identified with
 Coleman's exact sequence \eqref{colemanfinite}: $ {\L}_0(\T_{K_\infty})=(\frak{M}\otimes e_1)\circ Col_\epsilon.$

 \end{enumerate}
 \end{prop}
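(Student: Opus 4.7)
The overall plan is to deduce all three parts from Perrin-Riou's fundamental exact sequence \eqref{PR} by passing to $\O_K$-integral structures via Wach modules, and then translate the result into Galois cohomology via Fontaine's isomorphism \eqref{fontaine} and (for the trivial $\eta$ case) into local units via Coleman power series. For part (i), the strategy is to restrict \eqref{PR} to the sub-$\O_K$-lattice generated by $M = \O_K e_{\eta,r}$. The multiplication-by-$t^r$ identification \eqref{tildeD} converts $(B^+_{rig,K} \otimes D_{cris}(V))^{\psi=1}$ on the lattice level into $(\O_K[[X]] \otimes_{\O_K} M)^{\psi=p^r}$, and simultaneously converts the Frobenius eigenvalue $p^{-r}$ of $D_{cris}(V)^{\varphi=p^{-r}}$ into the invariants $M^{\varphi=p^{-r}}$. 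The middle arrow in \eqref{PR} is $1-\varphi$ followed by $\Delta_r^{-1}$ onto $(B^+_{rig,K})^{\psi=0} \otimes D_{cris}(V)$; applying $D^{-r} \otimes t^{-r}$ restricts this to $\mathcal{D}_M = \O_K[[X]]^{\psi=0} \otimes_{\O_K} M$ and uses \eqref{D} to account for the twist by $p^r$, yielding the desired sequence.

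For part (ii), the plan uses the Wach module: with $\eta$ non-trivial, $N(T_0) = \O_K[[X]] \otimes_{\O_K} M_0$ and the inclusion $N(T_0) \hookrightarrow \mathbb{A}_K \otimes_{\mathbb{A}_K^+} N(T_0) = D(T_0)$ induces an isomorphism on $\psi=1$ invariants by Berger's result \cite[thm.~A.3]{berger-exp}. Twisting by $t^{-r}$ and using \eqref{tildeD} this yields $(\O_K[[X]] \otimes_{\O_K} M)^{\psi=p^r} \cong D(T)^{\psi=1}$, which is the right vertical isomorphism. The torsion part of $H^1(\qp, \T_{K_\infty})$ corresponds under Fontaine's isomorphism $\mathrm{Log}^*_{T^*(1)}$ to $D(T)^{\varphi=1}$, since $D(T)^{\psi=1}/D(T)^{\varphi=1}$ is $\Lambda(\Gamma)$-torsion-free whereas the sub-$(\varphi=1)$-part is killed by $\gamma-1$ after projection to cyclotomic level. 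The definitions of $\C(\T_{K_\infty})$ and $\L_0(\T_{K_\infty})$ then force commutativity.

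For part (iii), I would first treat $\zp(1)$ directly via Coleman's theory and then twist. Given $u \in \lu(K_\infty)$ with Coleman power series $g_u \in \O_K[[X]]^\times$ satisfying $\varphi(g_u) = \prod_{\zeta^p=1} g_u(\zeta(1+X)-1)$, the logarithmic derivative $D\log g_u = (1+X) g_u'/g_u$ lies in $\O_K[[X]]$ and satisfies $\psi(D\log g_u) = \tfrac{1}{p} D\log g_u$, so $(D\log g_u) \otimes t_1 \in \tilde{D}(\zp(1))^{\psi=1}$; injectivity and the identification of the quotient with $\zp$ (matching the valuation side) follow from the Coleman exact sequence \eqref{colemanfinite} together with the torsion analysis above. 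Twisting by $\epsilon^{\otimes (r-1)}$ upgrades this to the diagram for $\zp(r)$. The final identification with \eqref{colemanfinite} via the Mahler transform amounts to the computation $D \mathcal{L}(g_u) = \tfrac{1}{p}(1-\varphi)(D\log g_u)$ — a direct consequence of \eqref{colpowerseries} and the defining relations \eqref{D} — which after applying $\frak{M}^{-1}$ translates $\L_0(\T_{K_\infty})$ precisely into $\mathrm{Col}_\epsilon$ tensored with $(1+X) \otimes e_1$.

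The main obstacle will be bookkeeping in part (iii): the operator $\C(\T_{K_\infty}) = (D^{-r} \otimes t^{-r})(1-\varphi)$ is defined abstractly on $(\varphi, \Gamma)$-modules, while $D\log g_u$ is a concrete object with its own Frobenius behaviour, and reconciling the two requires the compatibility $\mathcal{L}(g_u) \cdot (1+X) = \mathrm{Col}_\epsilon(u) \cdot (1+X)$ from \eqref{colpowerseries} to be threaded through the $D^{-r}$, $t^{-r}$, and Mahler transforms without sign or normalisation drift. A secondary difficulty is verifying that the isomorphism $\lu(K_\infty) \cong \tilde{D}(\zp(1))^{\psi=1}$ induced by $D\log g_{-}$ fits into the short exact sequence with the correct $\zp$-quotient, which requires the residue calculation $\tilde{D}(\zp(1))^{\psi=1} \subset D(\zp(1))^{\psi=1}$ having cokernel generated by $X^{-1} \otimes t_1$ as recorded in the preparatory remark, together with matching the valuation normalisation \eqref{normalisation} on the Galois side.
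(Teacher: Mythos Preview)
Your plan follows the paper's route closely in all three parts. Two places where you gloss over the non-trivial content deserve mention.

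In (i), ``restricting \eqref{PR} to the $\O_K$-lattice'' does not by itself preserve exactness in the middle: one must check that a preimage under $1-\varphi$ can be found \emph{integrally}. The paper isolates this by first treating $r=0$: given $f\otimes e_{\eta,0}\in\mathcal{D}_{M_0}$ with $f(0)e_{\eta,0}=(1-\varphi)b$ for some $b\in M_0$, one writes $f(X)-f(0)=Xg(X)$ and observes that the series $F':=\sum_{i\ge0}\varphi^i\bigl(Xg(X)\otimes e_{\eta,0}\bigr)$ converges in $\O_K[[X]]\otimes M_0$ (not merely in $B^+_{\mathrm{rig},K}\otimes M_0$), so that $F=F'+b$ is an integral preimage. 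This convergence is the actual integrality argument; the general $r$ then follows from the twist diagram $M\leftrightarrow M_0$ you indicate.

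In (iii), the compatibility of $u\mapsto D\log g_u\otimes t_1$ with the Kummer map $\delta$ under Fontaine's isomorphism $\mathrm{Log}^*_{\qp}$ is not a formality; the paper imports precisely this triangle from \cite[prop.~V.3.2(iii)]{cher-col1999}. Your final identity has a stray factor: from $D\varphi=p\varphi D$ one gets $D(1-p^{-1}\varphi)=(1-\varphi)D$, hence $D\mathcal{L}(g_u)=(1-\varphi)D\log g_u$ without the extra $1/p$. Rewritten as the paper does it,
\[
\bigl(D^{-1}(1-\varphi)D\log g_u\bigr)\otimes e_1=(1-p^{-1}\varphi)\log g_u\otimes e_1=\mathcal{L}(g_u)\otimes e_1=\mathrm{Col}(u)\cdot(1+X)\otimes e_1,
\]
this is exactly the identification $\L_0(\T_{K_\infty})=(\frak{M}\otimes e_1)\circ\mathrm{Col}_\epsilon$ with \eqref{colemanfinite}. (Your eigenvalue claim $\psi(D\log g_u)=\tfrac{1}{p}D\log g_u$ should also be rechecked against the convention that $\psi$ acts only on the $\mathbb{A}_K$-factor of $D(\zp(1))$; the correct statement is that $D\log g_u\otimes t_1$ is $\psi$-fixed.)
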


\begin{proof}
 The exactness in (i) for $M_0$ can be checked as follows. Let $f(X)\otimes e_{\eta,0}$ be in
 ${\mathcal{D}_{M_0}}^{\Delta_{M_0,0}=0},$ i.e., $f(0)e_{\eta,0}=(1-\varphi)b$ for some $b\in M_0.$
 Hence $(f(X)-f(0))\otimes e_{\eta,0}=Xg(X)\otimes e_{\eta,0}$ for some $g\in \O_K[[X]]$ and \[F':=(1-\varphi)^{-1}(Xg(X)\otimes e_{\eta,0}):=\sum_{i\geq 0}\varphi^i(Xg(X)\otimes e_{\eta,0})\in \O_K[[X]]\otimes M_0\]
is a well-defined element. Setting $F:=F'+b$ we have $(1-\varphi)F=f(X)\otimes e_{\eta,0}$ as
desired. Now exactness follows from \eqref{PR}.
 The general case follows from the following commutative ``twist diagram'' of $\O_K$-modules
 \[\xymatrix@C=0.6cm{
   0 \ar[r]^{ } & 1\otimes M^{\varphi=p^{-r}}\ar[d]^{\cong}_{1\otimes(t^r\otimes\epsilon^{\otimes -r})} \ar[r]^{ } & (\O_K[[X]]\otimes_{\O_K}M)^{\psi=p^r} \ar[d]^{\cong}_{1\otimes(t^r\otimes\epsilon^{\otimes -r})} \ar[r]^{ } & {\mathcal{D}_M} \ar[d]^{\cong}_{D^r\otimes(t^r\otimes\epsilon^{\otimes -r})} \ar[r]^(0.3){\Delta_{M,r}} & M/(1-p^r\varphi)M \ar[d]^{\cong}_{ t^r\otimes\epsilon^{\otimes -r} }\ar[r]^{} & 0 \phantom{.}   \\
   0 \ar[r]^{ } & 1\otimes M_0^{\varphi=1} \ar[r]^{ } & (\O_K[[X]]\otimes_{\O_K}M_0)^{\psi=1} \ar[r]^(0.7){ 1-\varphi} & {\mathcal{D}_{M_0}}  \ar[r]^(0.3){\Delta_{M_0,0}} & M_0/(1-p^r\varphi)M_0 \ar[r]^{} & 0.
   }\] Item (ii) is clear by the fact that $D(T)=\mathbb{A}_K\cdot a\otimes t_{\eta,r},$ which can
   either be calculated directly or deduced from the above remark. The
    statement about the torsion (first vertical isomorphism) follows from \cite[thm.\
   5.3.15]{colmez-tsinghua}.

For (iii) first note that by \cite[prop. V.3.2 (iii)]{cher-col1999} we have a commutative diagram
\[\xymatrix{
  {\mathbb{U}(K_\infty) \ar[dr]_{\delta} \ar[r]^{\Upsilon}}
                &  D(\zp(1))^{\psi=1}\ar@{=}[r] \ar[d]^{\mathrm{Log}^*_{\qp}}  & D(\zp)^{\psi=1}(1)    \\
                & {\H^1(\qp,\T_{K_\infty}(\zp(1)))}             }\]
where   $\Upsilon$ maps  $u$ to $ \frac{Dg_u}{g_u}\otimes t_1=D \log g_u \otimes t_1.$ The
statements concerning the first diagram follow easily, see also \cite[\S 7.2]{colmez-tsinghua}. The
second diagram follows as above. By construction the composite
\[\mathbb{U}(K_\infty)\to D(\zp(1))^{\psi=1}\to {\mathcal{D}_M}\] maps $u=(u_n)_n$ to \begin{eqnarray*}
\left(D^{-1}(1-\varphi) D \log g_u\right)\otimes e_1&=& \left((1-p^{-1}\varphi)\log
g_u\right)\otimes
e_1\\
&=&(1- \varphi)\left(\log g_u\otimes e_1\right)\\
&=&\mathcal{L}(g_u)\otimes e_1\\
&=&Col(u)\cdot (1+X)\otimes e_1,\end{eqnarray*} where $\mathcal{L}$ has been defined in
\eqref{colpowerseries}. This implies the last statement.
\end{proof}

%\begin{rem}
% By introducing denominators one may extend $\tilde{\L}_0(\T_{K_\infty}(\zp(r))$ to
% \[\xymatrix{
%   {\L}_0(\T_{K_\infty}(\zp(r)): \H^1(\qp, \Tun )     \ar[r]^(0.65){\cong} & D(\zp(r))^{\psi=1} \ar[r]^{{1-\varphi}} & ?????\varphi(X)^{-1} \mathcal{D}_{M}\subseteq \mathcal{K}\otimes \mathcal{D}_{M},   } \]
%   such that the following diagram commutes
%\[\xymatrix{
%  {\mathbb{U}(K_\infty)(r-1)} \ar[d]_{\delta} \ar[rr]^{\tilde{\L}_0(\T_{K_\infty}(\zp(r))}   && {\mathcal{D}_M} \ar@{^(->}[d]  \\
%  {\H^1(\qp,\T_{K_\infty}(\zp(r))))}   \ar[rr]^(0.6){\L_0(\T_{K_\infty}(\zp(r)) )}   && {?????\varphi(X)^{-1} \mathcal{D}_{M}}.   }\]
%  But this is not needed in the following.
%\end{rem}

 Now let $K$ be again a finite extension of degree $d_K$ over $\qp.$ For a uniform treatment we define
\[\tilde{\H}^1(\qp, \T_{K_\infty}(T)):=\left\{
    \begin{array}{ll}
      \H^1(\qp, \T_{K_\infty}(T)), & \hbox{if $\eta\neq \1$;} \\
      \lu(K_\infty) (r-1), & \hbox{if $T=\zp(r)$.}
    \end{array}
  \right.
\]
%and
%\[\tilde{\L}_0(\T_{K_\infty}(T)):= \L_0(\T_{K_\infty}(T)),  \hbox{ }\;\;
%\tilde{\mathcal{C}}(\T_{K_\infty}):= \mathcal{C}(\T_{K_\infty}),
%\]
%in the case $\eta\neq \1.$

Setting ${\mathcal{H}_M}:=\{F\in B_{rig,K}^+ \otimes_{\O_K} M | (1-\varphi)f\in {\mathcal{D}_M}\},$
using \cite[thm.\ II.11]{berger-exp} and the commutativity of the following diagram
\[\xymatrix{
  {\mathcal{H}_M} \ar[d]_{D^r\otimes(t^r\otimes \epsilon^{\otimes -r})} \ar[r]^{1-\varphi} & {\phantom{mm}\mathcal{D}_{M}^{\Delta_{M,r}=0} } \ar[d]^{D^r\otimes(t^r\otimes \epsilon^{\otimes -r})} \\
  (\O_K[[X]]\otimes_{\O_K}M_0)^{\psi=1}\ar[r]^(0.6){1-\varphi} & {\phantom{mm}\mathcal{D}_{M_0}^{\Delta_{M_0,0}=0} }  }\]
we see that the map $\L_0(\T_{K_\infty})$ coincides with the ``inverse" of Perrin-Riou's  large
exponential map $\Omega_{T,r}: \mathcal{D}_{M}^{\Delta_{M,r}=0}\to D(T)^{\psi=1}/T^{H_K}(\cong
\H^1(\qp, \T_{K_\infty})/T^{H_K})$ in \cite{perrin99} (respectively $(-1)^{r-1}$ times the one in
\cite{perrin94}), which sends $f$ to $(D^r\otimes t^r)F,$ where $F\in\mathcal{H}_M$ satisfies
$(1-\varphi)F=f.$ Here ``$D^r\otimes t^r$'' denotes the composite
\[\xymatrix{
  {\mathcal{H}_M} \ar[r]^(0.3){D^r\otimes(t^r\otimes \epsilon^{\otimes -r})} & (\O_K[[X]]\otimes_{\O_K}M_0)^{\psi=1} \ar[rr]^{1\otimes(t^{-r}\otimes \epsilon^{\otimes r})} && (\O_K[[X]]\otimes_{\O_K}M)^{\psi=p^r} \ar[r]^(0.7){t^r} & D(T)^{\psi=1}  }\]
and corresponds to the operator $\nabla_{r-1}\circ \ldots \circ \nabla_0$ in \cite{berger-exp} for
$r\geq 1.$ In particular, by \cite[thm.~ II.10/13]{berger-exp} we obtain the following descent diagram for
%{\em non-trivial $\eta$} and
$r,n\geq 1$, where the maps $\Xi_{M,n}=\Xi_{M,n}^\epsilon$ are recalled below \eqref{Xi} \be\label{descentberger}\xymatrix{
  {\tilde{\H}^1(\qp,\T_{K_\infty}(T))}  \ar[dd]_{pr_n} \ar[rr]^(0.6){ {\L}_0(\T_{K_\infty}(T)) }  & & {\mathcal{D}_M} \ar[dd]^{\Xi_{M,n}}   \\
    &   & \\
     {\H^1(K_n, V) }   & & K_n\cong D_{dR,K_n}(V)   \ar[ll]_(0.6){(-1)^{r-1}(r-1)!exp_{K_n,V}},   } \ee
while for $r\leq 0$ \be\label{descentberger2}\xymatrix{
  {\tilde{\H}^1(\qp,\T_{K_\infty}(T))}  \ar[dd]_{pr_n} \ar[rr]^(0.6){ {\L}_0(\T_{K_\infty}(T)) }  & & {\mathcal{D}_M} \ar[dd]^{\Xi_{M,n}}   \\
    &   & \\
     {\H^1(K_n, V) }  \ar[rr]^(0.4){ (-r)!exp_{K_n,V^*(1)}^*} & & K_n\cong D_{dR,K_n}(V).      } \ee

\begin{rem}
In particular, for $T=\zp(1)$   we have the following commutative descent diagram for $n\geq 1$
\[\xymatrix{
 {\mathbb{U}(K_\infty)} \ar[ddd]_{pr_n}  \ar[rr]^{ {\L}_0(\T_{K_\infty}(\zp(1))) } &   &  {\mathcal{D}_M } \ar[ddd]^{\Xi_n} \\
 &   &   \\
    &   &  \\
  {\qp\otimes_{\zp}U_n}   \ar[rd]^{\delta} &     & K_n\cong D_{dR,K_n}(\qp(1))\ar[dl]_{exp_{K_n,\qp(1)}}\ar[ll]_{exp },  \\
&  {\H^1(K_n, \qp(1))}  &    }\]
   where $exp$ denotes the usual $p$-adic exponential (series), while $\Xi_n$ maps
   $((1-p^{-1}\varphi)\log g_u)\otimes e_1$ to $\log g_u^{\phi^{-n}}(\epsilon_n-1)  =\log
   u_n  .$
\end{rem}

 In order to arrive at a
morphism \[ {\L}(\T_{K_\infty}(T)):\tilde{\H}^1(\qp, \T_{K_\infty}(T))\to
\T_{K_\infty}(T)\otimes_{\La}\La_{[\T(T),\tau_p]^{-1}},\] where
 $[\T,\tau_p]^{-1}=\tau_p\eta^{-1}(\tau_p),$ generalising $\mathcal{L}_{K,\epsilon}$ in \eqref{col2}, we compose $ {\L}_0(\T_{K_\infty}(T))$ with the following
canonical isomorphisms \be\label{DMiso}\xymatrix{
  {\mathcal{D}_{M}=\O_K[[X]]^{\psi=0}\otimes_{\O_K} M }& {\O_K[[\Gamma]]\otimes_{\O_K}M} \ar[l]^(0.4){\cong}_(0.4){\Psi_M} \ar[r]_{\cong}^{\Theta_M} & {\T_{K_\infty}\otimes_{\La}\La_{[\T,\tau_p]^{-1}}} ,
  }\ee
where the left one   $\Psi_M(\lambda\otimes m)= \lambda\cdot (1+X)\otimes m$ is induced by
$\frak{M},$ while the right one is given by
\begin{eqnarray*}
\Theta_M(\lambda\otimes (at^{-r}\otimes t_{ \eta,r}))&=&(1\otimes t_{ \eta,r})\otimes
(\sum_{i=0}^{d_K-1}
\tau_p^i\otimes\eta^{-i}(\tau_p)\phi^{-i}(\lambda a))\\
&=&(1\otimes t_{ \eta,r})\otimes (\sum_i \tau_p^i\otimes \phi^{-i}(\lambda )a).
\end{eqnarray*}

 %$\lambda\otimes at^{-r}\otimes t_{\eta,r} \mapsto 1\otimes t_{\eta,r}\otimes
%\sum_{i=0}^{i=d_K-1}\tau_p^i\otimes \eta^{-1}(\tau_p)\phi^{-i}(\lambda a) $ (where does $t$ enter
%the formula?????)

Similarly  to the original Coleman map $Col$ in \eqref{colemanfinite} the homomorphisms
  $ {\mathcal{C}}(\T_{K_\infty}),$  $
 {\mathcal{L}}_0(\T_{K_\infty})$ and $ {\mathcal{L}}(\T_{K_\infty})$ are norm compatible
when enlarging $K$ within $\qp^{ur}.$ Thus, by taking inverse limits we may and do define them also
for infinite unramified extensions $K$ of $\qp.$ Then we have the following twist and descent
properties:

\begin{lem}\label{twistL} Let $K'\subseteq K$ be (possibly infinite) unramified extensions of $\qp$ and $Y$ a  $(\La(G(K'_\infty/\qp)),\La(G(K_\infty/\qp))$-module such that
$Y\otimes_{\La(G(K_\infty/\qp))}\T_{K_\infty}(T)\cong \T_{K'_\infty}(T')$ as
$\La(G(K'_\infty/\qp))$-module with compatible $G_{\qp}$-action. Then
   \[Y\otimes_{\La(G(K_\infty/\qp))} {\mathcal{L}}_0(\T_{K_\infty}(T))= {\mathcal{L}}_0(\T_{K'_\infty}(T'))\] and
 \[Y\otimes_{\La(G(K_\infty/\qp))} {\mathcal{L}}(\T_{K_\infty}(T))= {\mathcal{L}}(\T_{K'_\infty}(T')).\]
 In  particular, $ {\L}(\T_{K'_\infty}(T))= {\L}_{\T_{K'_\infty}(T),\epsilon}$ in \eqref{LT}.
\end{lem}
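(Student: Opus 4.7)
The plan is to decompose any admissible bimodule $Y$ into a base-change factor and a twist factor, and then verify compatibility of each ingredient in the definition of $\mathcal{L}_0$ (Fontaine isomorphism, operator $\mathcal{C}=(D^{-r}\otimes t^{-r})(1-\varphi)$, and the identifications $\Psi_M, \Theta_M$) separately on each factor. First I would reduce to $K$ finite by taking limits, since all maps were constructed as norm-compatible systems when passing to infinite unramified extensions. Any $Y$ of the given form factors canonically as $Y=Y_{1}\otimes Y_{2}$, where $Y_{2}$ realises the projection $G(K_\infty/\qp)\twoheadrightarrow G(K'_\infty/\qp)$ (i.e.\ is a trace module for the unramified base change $K\to K'$) and $Y_{1}$ realises a rank-one twist $T\rightsquigarrow T'$ whose Galois action already factors through $G(K'_\infty/\qp)$. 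It then suffices to verify the two claimed equalities on each factor.

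For the base-change factor $Y_{2}$, the Fontaine isomorphism $D(T)^{\psi=1}\cong \H^1(\qp,\T_{K_\infty}(T))$ is compatible with the relevant norms, essentially by Shapiro's lemma in the setting of $(\varphi,\Gamma)$-modules. The operator $\mathcal{C}$ is intrinsic to $(\varphi,\Gamma)$-modules and therefore commutes with any such change of unramified coefficients. Finally $\Psi_M$ (Mahler transform) and $\Theta_M$ (built from the twist data on $M$) are compatible with $\mathrm{Tr}_{K/K'}$ on $\O_K$-coefficients, by exactly the same mechanism that made diagram \eqref{twisttrace} commute; this handles the equality for $Y_{2}$.

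The main obstacle is the twist factor $Y_{1}$. Under the twist from $T=T_0(r)$ to $T'$, the lattice $M=\O_K e_{\eta,r}$ changes to $M'=M(T')$, and in particular the exponent in $D^{-r}\otimes t^{-r}$ shifts. The identity $Y_{1}\otimes \mathcal{L}_0(\T_{K_\infty}(T))=\mathcal{L}_0(\T_{K'_\infty}(T'))$ amounts to checking that multiplication by the appropriate Wach-basis element intertwines the two versions of $\mathcal{C}$. This in turn follows from the commutation relations \eqref{D} for $D$ and $\varphi$ with multiplication by the element of $B^+_{rig,K}$ realising the change $M\to M'$, combined with the functoriality of Fontaine's isomorphism under tensoring with a crystalline character. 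The corresponding equality for $\mathcal{L}$ is then immediate from the first, since $\Psi_M$ and $\Theta_M$ are functorial in $M$ under the twist $M\to M'$. Finally, the ``in particular'' assertion reduces to the case $T=\zp(1)$ for arbitrary $K'$: by Proposition \ref{twistingPR}(iii) one has $\mathcal{L}(\T_{K'_\infty}(\zp(1)))=(\mathfrak{M}\otimes e_1)\circ \mathrm{Col}_{\epsilon}$, which coincides with the classical Coleman map $\mathcal{L}_{K',\epsilon}$ of \eqref{col2}, and applying the twist-compatibility already proved then yields $\mathcal{L}(\T_{K'_\infty}(T))=\mathcal{L}_{\T_{K'_\infty}(T),\epsilon}$ as defined in \eqref{LT}.
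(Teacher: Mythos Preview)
Your approach is essentially the same as the paper's: both split the bimodule into a twist factor and a descent/base-change factor and verify compatibility of $\mathcal{C}$, $\Psi_M$, $\Theta_M$, and Fontaine's isomorphism on each piece separately, with the ``in particular'' clause reduced to the case $T=\zp(1)$ via Proposition~\ref{twistingPR}(iii). The only notable differences are the order (you base-change first, then twist over $K'$; the paper twists first with $K'=K$, then descends) and the handling of the twist step: where you invoke functoriality of Wach modules and the commutation relations \eqref{D} directly, the paper instead passes to $T''/p^n$ so that the twist character has finite image and hence trivialises over a finite subextension, then takes limits in both $K'$ and $n$ (citing \cite{berger-limits} for the behaviour of $(\varphi,\Gamma)$-modules under such limits). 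The paper also writes out the explicit commutative diagrams realising the unramified twist via multiplication by $a^{-1}\otimes a\otimes 1$ and the ring map $f:g\mapsto\eta(g)^{-1}g$, which is the concrete content behind your phrase ``multiplication by the appropriate Wach-basis element intertwines the two versions of $\mathcal{C}$''.
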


\begin{proof}
The proof can be parted into a twist-statement, where $K'=K$ and $T'\cong T\otimes_{\zp}T'', $ such
that $G_{\qp}$ acts diagonally on the tensor product and $T'$ is rank one $\zp$-representation of
$G,$ and a descent statement. One first proves the twist-statement for $T''/p^n,$ $n$ fix, and all
finite subextensions $K'$ of $K,$ such that $G(K/{K'})$ acts trivially on $T''/p^n.$ Afterwards one
takes limits over $K'$ obtaining the twist-statement for $T''/p^n.$ Then, taking the projective
limit with respect to $n$ (see \cite{berger-limits} for the correct behaviour of
$(\varphi,\Gamma)$-modules under such limits) one shows the full twist-statement (compare with the
well-known twisting for $\H^i_{IW}$). The descent-statement then follows easily from the
 norm-compatibility and the fact that the twisted analogue of the exact sequence \eqref{colemaninfinite}
 \[\xymatrix@C=0.5cm{
   0 \ar[r] & {\tilde{\H}^1(\qp, \T_{K_\infty}(T))}\ar[rr]^(0.45){ {\mathcal{L}}(\T_{K_\infty})} && {\T_{K_\infty}(T)\otimes_{\La}\La_{[\T(T),\tau_p]^{-1}}} \ar[rr]^{ } && T   \ar[r] & 0 }\]
recovers (for finite extension $K'$ of $\qp$) the exact sequence
\[\xymatrix@C=0.5cm{
   0 \ar[r] & T^{G(K/K')} \ar[r]& {\tilde{\H}^1(\qp, \T_{K'_\infty}(T))}\ar[rr]^(0.45){ {\mathcal{L}}(\T_{K'_\infty})} && {\T_{K'_\infty}(T)\otimes_{\La}\La_{[\T(T),\tau_p]^{-1}}} \ar[r]^{ } & T   \ar[r] & 0 }\]
by taking $G(K/K')$-coinvariants.
 In more detail the unramified twist (the cyclotomic twist being well-known): Assume that $\eta$ factorises over $G(K/\qp),$ i.e., $a=a_\eta\in \O_K^\times,$ and let
 $N:= \O_Ke_{ r}\subseteq D_{cris}(\qp(r))$ be the lattice associated to $\qp(r).$ Then we have the
 following commutative diagram of $\Lambda$-modules
\[\xymatrix{
  {\left(\O_K[[X]]^{\psi=0}\otimes_{\O_K}N\right) \otimes_{\zp}T_0}\ar[d]_{a^{-1}\otimes a\otimes 1}  & {\left(\O_K[[\Gamma]]\otimes N\right)\otimes_{\zp}T_0 } \ar[d]_{a^{-1}\otimes a\otimes 1}\ar[l]_(0.45){\Psi_N\otimes T_0 } \ar[r]^{\Theta_N\otimes T_0} & {\Lambda\otimes_{\Lambda,f}\T(\zp(r))\otimes \Lambda_{\tau_p} \ar[d]^{\vartheta\otimes \tilde{f}} }\\
  {\O_K[[X]]^{\psi=0}\otimes_{\O_K}M}  & { \O_K[[\Gamma]]\otimes M} \ar[r]^{\Theta_M}\ar[l]_{\Psi_M } & {\T(T)\otimes_\Lambda \Lambda _{\tau_p\eta(\tau_p)^{-1}}}, }  \]
  where in the top line the $\Lambda$-action is induced by the diagonal $G$-action and via left multiplication on $\Lambda,$ respectively,
\[\Theta_N\otimes T_0(\lambda\otimes  (t^{-r}\otimes t_{  r})\otimes t_{ \eta,0})= 1\otimes 1\otimes
t_r\otimes\sum_i \tau^i_p\otimes\phi^{-i}(\lambda)  \]
 and $\tilde{f}:=f\otimes 1$ on
$\Lambda\widehat{\otimes}\zpnr$ is induced by $f:\Lambda\to\Lambda,$ $g\mapsto \eta(g)^{-1}g,$ while
\[\vartheta:\Lambda\otimes_{\Lambda,f}\T(\zp(r))\to \T(T), \;\; a\otimes (b\otimes t_r)\mapsto
af(b)\otimes t_{\eta,r}.\] Here $\Lambda\otimes_{\Lambda,f}-$ indicates that the tensor product is
formed with respect to $f.$ Also we have the commutative diagram
\[\xymatrix{
  D(\zp(r))^{\psi=1}\otimes T_0 \ar[d]_{\cong} \ar[rr]^(0.43){\C(\T_{K_\infty}(\zp(r)))} && {\left(\O_K[[X]]^{\psi=0}\otimes_{\O_K}N\right) \otimes_{\zp}T_0} \ar[d]^{a^{-1}} \\
  D(T)^{\psi=1} \ar[rr]^{\C(\T_{K_\infty}(T))} &&  {\O_K[[X]]^{\psi=0}\otimes_{\O_K}M.}   }\]
\end{proof}

As in section \ref{descent} we set $\La'=\mathbb{Q}_p[G_n].$

\begin{lem} \label{epsilondR} There are natural isomorphisms
\begin{enumerate}
\item $\Sigma_{M,n}: K_n'\otimes M= K'_n(a  t^{-r}\otimes t_{r,\eta})\cong D_{dR}(V')  $  of $\La'$-modules,
\item $1\otimes\Sigma_{M,n}: V_{\rho^*}\otimes_{\La'}K'_n\otimes M\cong V_{\rho^*}\otimes_{\La'}D_{dR}(V')\cong D_{dR}(W)  $ of $L$-vector spaces,
%\item $\epsilon_{L,dR}(W)=????????$
%\item $\epsilon_{\La',dR}(V')=\bigoplus_{\rho} \epsilon_{L,dR}(e_\rho V')$
\end{enumerate}
\end{lem}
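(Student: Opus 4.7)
The plan is to construct $\Sigma_{M,n}$ by an explicit Galois-averaging formula and then deduce (ii) from (i) by base change along $V_{\rho^*}\otimes_{\La'}(-)$. Concretely, for $y\in K'_n$ I would set
\[
\Sigma_{M,n}(y\cdot e_{\eta,r})\;:=\;\sum_{g\in G_n}g^{-1}(y)\,a t^{-r}\otimes g\otimes t_{\eta,r}\;\in\;B_{dR}\otimes_\qp V',
\]
where $V'=\qp[G_n]^\sharp\otimes_\qp\qp(\eta)(r)$ and $e_{\eta,r}=at^{-r}\otimes t_{\eta,r}$. The first task is to verify the image lies in $D_{dR}(V')=(B_{dR}\otimes V')^{G_\qp}$. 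Acting by $\sigma\in G_\qp$, the scalar $\eta(\sigma)\kappa^r(\sigma)$ coming from $t_{\eta,r}$ cancels exactly with $\eta^{-1}(\sigma)\kappa^{-r}(\sigma)$ produced by $\sigma(a)=\eta^{-1}(\sigma)a$ and $\sigma(t^{-r})=\kappa^{-r}(\sigma)t^{-r}$; after the substitution $h=\bar\sigma^{-1}g$ the $G_\qp$-invariance reduces to the identity $(\bar\sigma h^{-1}\bar\sigma^{-1})(y)=h^{-1}(y)$, which holds because $G_n=G(K'_n/\qp)$ is abelian.

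Injectivity is immediate from the fact that the $g\otimes t_{\eta,r}$ form a $B_{dR}$-basis of $B_{dR}\otimes V'$. For surjectivity I would classify all $G_\qp$-fixed elements: writing an invariant as $\sum_h b_h\otimes h\otimes t_{\eta,r}$ and extracting the universal factor $at^{-r}$, the invariance relation collapses to $c_h=\sigma(c_{\bar\sigma h})$, forcing $c_h=h^{-1}(c_e)$ with $c_e\in B_{dR}^{G_{K'_n}}=K'_n$, so every invariant arises from a unique $y=c_e\in K'_n$. For $\La'=\qp[G_n]$-equivariance, recall that $\La'$ acts on $V'$ by multiplication on the $\qp[G_n]^\sharp$-factor; substituting $g'=hg$ in the defining sum yields $\Sigma_{M,n}(h(y)\cdot e_{\eta,r})=h\cdot\Sigma_{M,n}(y\cdot e_{\eta,r})$ directly.

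For (ii), I would apply the exact functor $V_{\rho^*}\otimes_{\La'}(-)$ to the isomorphism of (i). The remaining identification
\[
V_{\rho^*}\otimes_{\La'}D_{dR}(V')\;\cong\;\bigl(B_{dR}\otimes_\qp(V_{\rho^*}\otimes_{\La'}V')\bigr)^{G_\qp}\;\cong\;D_{dR}(W)
\]
uses that $V_{\rho^*}$ carries the trivial $G_\qp$-action (so $G_\qp$-invariants commute with $V_{\rho^*}\otimes_{\La'}(-)$, after cutting by the idempotent $e_\rho$ where $V_{\rho^*}$ is flat over $e_\rho\La'\cong L$) together with the tautological identification $V_{\rho^*}\otimes_{\La'}V'\cong W$, since both realise $L(\rho\eta)(r)$ as the $\rho^{-1}$-isotypic component of $V'$. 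The only real obstacle is bookkeeping through the $(^\sharp,\eta,r)$-twists in the invariance computation; all substantive content sits in the abelianness of $G_n$ and the already-observed fact that $at^{-r}\otimes t_{\eta,r}$ is $G_\qp$-fixed in $B_{dR}\otimes\qp(\eta)(r)$.
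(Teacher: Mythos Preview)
Your proposal is correct and arrives at exactly the same map as the paper: reindexing the paper's formula $x\mapsto\sum_{g\in G_n}g(x)\,at^{-r}\otimes g^{-1}\otimes t_{\eta,r}$ by $g\mapsto g^{-1}$ gives precisely your $\Sigma_{M,n}$. The only difference is one of packaging: the paper obtains the formula conceptually, by specialising the general induction isomorphism $\qp[G_\qp]\otimes_{\qp[G_{K'_n}]}(B_{dR}\otimes\qp(\eta)(r))\cong B_{dR}\otimes V'$ together with Shapiro's $N^H\cong(\mathrm{Ind}^H_G N)^G$, whereas you verify $G_\qp$-invariance, bijectivity and $\La'$-equivariance by direct computation (using abelianness of $G_n$ at the key step). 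Your treatment of (ii) via the idempotent $e_\rho$ to justify commuting $V_{\rho^*}\otimes_{\La'}-$ with $(-)^{G_\qp}$ is in fact slightly more explicit than the paper's ``similarly''.
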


\begin{proof}
The canonical isomorphism (which makes explicit the general formula  $(Ind^H_G (B\otimes V))
\cong(B\otimes \mathrm{Ind} V) $)
\[ \qp[G_{\qp}]\otimes_{\qp[G_{K_n'}]}\left(B_{dR}\otimes_{\qp}\qp(\eta)(r)\right)  \cong         B_{dR}\otimes_{\qp} \qp[G_n]^\sharp\otimes_{\qp} \qp(\eta)(r) ,\]
which maps $g\otimes a\otimes b$ to $ga\otimes \bar{g}^{-1}\otimes gb$ with $g\in G_{\qp}$ induces
the isomorphism (via the general isomorphism $ N^H\cong (Ind^H_G N)^G, n\mapsto \sum_{\bar{g}\in
G/H} g\otimes n$\big)
\[K_n' \cdot(at^{-r}\otimes t_{r,\eta})=\left(B_{dR}\otimes\qp(\eta)(r)\right)^{G_{K_n'}}\cong
D_{dR}(V'),\] which maps $x\cdot at^{-r}\otimes t_{r,\eta}$ to
\[\sum_{g\in G_n}  g(xat^{-r})\otimes g^{-1}\otimes gt_{r,\eta} =\sum_{g\in G_n} g(x) at^{-r}\otimes g^{-1}\otimes t_{r,\eta} .    \]

Putting $e_{\eta,r}:=at^{-r}\otimes t_{r,\eta}$  we similarly obtain the isomorphism in (ii)
sending
\[l\otimes x\otimes e_{\eta,r} \mapsto \sum_{g\in G_n} g(x) at^{-r}\otimes \rho(g)l\otimes
t_{r,\eta},\]
where this element is regarded in $B_{dR}\otimes_{\qp} W=B_{dR}\otimes_{\qp} L \otimes_{\qp}
\qp(\eta)(r).$ Alternatively we can read it in $(B_{dR}\otimes_{\qp} L)\otimes_L W$ as
\be\label{Sigma} \#G_n at^{-r}e_{\rho^*} (x)l\otimes t_{\rho\eta,r}.\ee
\end{proof}

Any embedding $\sigma:L_\rho\to \overline{\qp}$ induces a map
$A_\rho:=\qp^{nr}\otimes_{\qp}L_\rho\to\overline{\qp},$ $x\otimes y\mapsto x\sigma(y),$ also called
$\sigma.$

Consider the Weil group $W(\overline{\qp}/\qp),$ which fits into a short exact sequence
\[\xymatrix@C=0.5cm{
  1 \ar[r] & I \ar[rr]^{ } && W(\overline{\qp}/\qp)\ar[rr]^{v} && \z \ar[r] & 0 ,}\]
and let $D$ be the linearised $W(\overline{\qp}/\qp)$-module associated to $D_{pst}(W)=A_\rho
e_{\eta,r}(\rho),$ i.e., $g\in W(\overline{\qp}/\qp)$ acts as $g_{\mathrm{old}}\varphi^{-v(g)}$ or
explicitly via the character \[\chi_D(g):=%\rho(g)\eta(\tau_p)^{-v(g)}p^{-rv(g)}.
{   \rho(g)\eta(\tau_p)^{v(g)}p^{rv(g)}}.
\]

For an embedding $\sigma$ we write $\bar{D}_\sigma:=\overline{\qp}\otimes_{A_\rho,\sigma}
D\cong\overline{\qp}e_{\eta,r}(\rho^\sigma),$ where $\sigma$ acts coefficient-wise on $\rho.$ If
$n\geq 0$ is minimal, such that $G(\overline{\qp}/\qp^{nr}(\mu(p^n))$ acts trivial on
$\bar{D}_\sigma,$ then by property (3) and (7)\footnote{Apparently, the formula in (7) in (loc.\
cit.) is not compatible with Deligne as claimed: Deligne identifies
$W(\overline{\mathbb{Q}_p}/\qp)$ via class field theory with $\mathbb{Q}_p^\times$ by sending the
{\it geometric} Frobenius automorphism to $p,$ which induces by a standard calculation applied to
definition (3.4.3.2) for epsilon constants of quasi-characters of $\mathbb{Q}_p^\times$ in
\cite{del-localconstant} (see e.g. \cite[\S 8.5 between (4a) and (4b)]{hida93}) the formula
\[\epsilon(V_\chi,\psi)=\chi(\tau_p)^{-n}\sum_{\sigma\in \Gamma_n}\chi(\sigma)^{-1}\sigma\epsilon_n,\] while in (loc.\ cit.) the factor is just
$\chi(\tau_p)^n.$ Here $\chi:W(\overline{\mathbb{Q}_p}/\qp)\to E^\times$ is a character which gives
the action on the $E$-vector space $V_\chi.$} in \cite[\S 3.2.2]{fukaya-kato} we obtain for the
epsilon constant attached to $\bar{D}_\sigma$ (see (loc.\ cit.))
\[\epsilon(\bar{D}_\sigma,-\psi)=1,\] if $n=0,$ while for $n\geq 1$
\begin{eqnarray*}
\epsilon(\bar{D}_\sigma,-\psi)
%\chi(\tau_p)^n\sum_{\gamma\in \Gamma_n} \chi(\gamma)^{-1}
%\psi(\kappa(\gamma)/p^n) \\
%&=&\frac{\rho(\tau_p)^n}{(p^r\eta(\tau_p))^n}\sum_{\gamma}\rho(\gamma)^{-1}\gamma\cdot \epsilon_n\\
&=&\epsilon(\bar{ D}_\sigma^*(1),\psi)^{-1}\\
&=&\left((\rho^\sigma \eta(\tau_p)p^{r-1})^n\sum_{\gamma\in\Gamma_n} \rho^\sigma(\gamma)
\gamma\cdot
\epsilon_n \right)^{-1}\\
&=&\left((\rho^\sigma \eta(\tau_p)p^{r-1})^n  \tau(\rho^\sigma,\epsilon_n)\right)^{-1}.
\end{eqnarray*}
Here $\Gamma_n:=G(K_n/K),$ $\psi:\qp\to\bar{\mathbb{Q}_p}^\times$ corresponds to the compatible
system $(\epsilon_n)_n,$ i.e.\ $\psi(\frac{1}{p^n})=\epsilon_n,$ and $\bar{D}_\sigma^*(1)$ denotes
the linearised Kummer dual of $\bar{D}_\sigma,$ i.e., \[
\chi_{\bar{D}_\sigma^*(1)}(g)=%\rho^\sigma(g)^{-1}\eta(\tau_p)^{ v(g)}p^{ (r-1)v(g)},
{  \rho^\sigma(g)^{-1}\eta(\tau_p)^{- v(g)}p^{ -(r-1)v(g),}      }
\]
while
\[\tau(\rho^\sigma,\epsilon_n):= \sum_{\gamma\in\Gamma_n}\rho^\sigma(\gamma) \gamma\cdot
\epsilon_n=\#\Gamma_n e^{\Gamma_n}_{\rho^*}\epsilon_n\] denotes the usual Gauss sum. Furthermore
\[\epsilon_L( D ,-\psi)= \left( \epsilon(\bar{D}_\sigma,-\psi)\right)_\sigma \in \prod_\sigma \mathbb{Q}_p^\times\cong (\overline{\qp}\otimes_{\qp} L)^\times\subseteq (B_{dR}\otimes_{\qp}L)^\times\]
is the $\epsilon$-element as defined in \cite[\S 3.3.4]{fukaya-kato}. We may assume that $L$
contains $\qp(\mu_{p^n});$ then $\epsilon_L( D ,-\psi)$ can be identified with
 \[1\otimes
  (\rho \eta(\tau_p)p^{r-1})^{-n} \tau(\rho ,\epsilon_n)^{-1}.\]

 Hence the comparison-isomorphism  renormalised by $\epsilon_L( {D},-\psi)$
 \[\epsilon_{L,-\epsilon,dR}(W)^{-1}: W\otimes L_{[W,\tau_p^{-1}]}\to D_{dR}(W)\subseteq B_{dR}\otimes_{\qp} L\otimes_L W,\]
 is explicitly given as \be\label{edR} x\otimes l \mapsto
\epsilon_L( D ,-\psi)^{-1}(-t)^{r}l \otimes x= (-1)^r(\rho \eta(\tau_p)p^{r-1})^{ n} \tau(\rho
,\epsilon _n)t^rl\otimes x,\ee where $\epsilon_L( D ,-\psi)^{-1}(-t)^{r}l $ is considered as an
element of $B_{dR}\otimes_{\qp}L.$

In order to deduce the descent diagram \eqref{descentdiag} from \eqref{descentberger}, for $n\geq
1,$ we have to add a commutative diagram of the following form
\[\xymatrix{
  {\mathcal{D}_{M}} \ar[d]_{\Xi_{M,n}} \ar[r]^{ } & {\T_{K_\infty}\otimes_{\La}\La_{[\T,\tau_p]^{-1}}} \ar[d]^{Y\otimes_\Lambda- } \\
 K_n\otimes M\cong D_{dR,K_n}(\qp(\eta)(r))/D_{cris}(\qp(\eta)(r))^{\varphi=1} &  V'\otimes_{\La'} (\La')_{[V',\tau_p^{-1}]},   \ar[l]^{ } }\]

where
\be\label{Xi}\Xi_{M,n}(f)=\Xi_{M,n}^\epsilon(f)=p^{-n}(\phi \otimes\varphi)^{-n}\left(F\right)(\epsilon_n-1)=p^{-n}(\varphi\otimes\varphi)^{-n}\left(F\right)(0)\ee
with  $F\in {\mathcal{H}_M}$ such that  $(1-\varphi)F=f=\tilde{f}\otimes e_{\eta,r}$ (recall that
$\varphi$ acts as $\varphi\otimes \varphi$ here) on ${\mathcal{D}_{M}}^{\Delta=0}$ and more
generally $\mathrm{mod} D_{cris}(\qp(\eta)(r))^{\varphi=1}$ (recall that
$D_{cris}(\qp(\eta)(r))^{\varphi=1}=0$ in the generic case)
\begin{eqnarray*}
\Xi_{M,n}(f)&=&p^{-n}\left( \sum_{k=1}^{n}(\phi\otimes\varphi)^{-k}\left(f(\epsilon_k-1)\right) +
(1-\phi\otimes\varphi)^{-1}\left(f(0)\right) \right)  \\
&=&p^{-n}\left( \sum_{k=1}^{n}p^{kr}\eta(\tau_p)^k \tilde{f}^{\phi^{-k}}(\epsilon_k-1)  + (1-
p^{-r}\eta(\tau_p)^{-1}\phi)^{-1}\tilde{f}(0) \right) \otimes e_{\eta,r}
\end{eqnarray*}
(see \cite[Lem. 4.9]{benois-berger},   where $f(0)$ is considered in $D_{cris}(V)$ and hence the
last summand above equals $(1-\varphi)^{-1}f(0)$ there   by the $\phi$-linearity of $\varphi.$).
Here, for any $H(X)=\tilde{H}(X)\otimes e\in B^+_{rig,K}\otimes_{\O_K} M$ we consider
$H(\epsilon_k-1)=\tilde{H}(\epsilon_k-1)\otimes e,$ $k\leq n,$ as element in $K_n\otimes_{\O_K}M,$
on which $\phi\otimes \varphi$ acts naturally.

First we note that for $n\geq 1$  we have a commutative diagram \be\label{decentpsiM}\xymatrix{
  {\mathcal{D}_{M}} \ar[d]^{\Xi_{M,n}}& {\O_K[[\Gamma]]\otimes M }\ar[d]_{pr_n\otimes id} \ar[l]_{\Psi_M }\ar[r]^{\Theta_M} & {\T_{K_\infty}\otimes_{\La}\La_{[\T,\tau_p]^{-1}}}\ar[d]^{Y\otimes_\Lambda-} \\
 K_n\otimes_{\O_K} M/D_{cris}(\qp(\eta)(r))^{\varphi=1}&  K[\Gamma_n]\otimes M \ar[l]_(0.3){\Psi_{M,n}}\ar[r] &  V'\otimes_{\La'} (\La')_{[V',\tau_p^{-1}]},
 }\ee
where
\begin{eqnarray}\label{PsiM}
 \Psi_{M,n}(\mu\otimes e_{\eta,r})&=&\Psi_{M,n}^\epsilon(\mu\otimes e_{\eta,r})\notag \\
 &=&p^{ -n}\left(
\sum_{k=1}^{n}  \epsilon_k ^{ \phi^{-k}(\mu)}\otimes \varphi^{-k}(e_{\eta,r})
+(1-\phi\otimes \varphi )^{-1}( 1^\mu\otimes e_{\eta,r}) \right) \\
 &=& \left(
\sum_{k=1}^{n}p^{kr-n}\eta(\tau_p)^k \epsilon_k ^{ \phi^{-k}(\mu)}  +p^{ -n}(1-
p^{-r}\eta(\tau_p)^{-1}\phi)^{-1}( 1^\mu) \right)\otimes e_{\eta,r}\notag
\end{eqnarray}
modulo $  D_{cris}(\qp(\eta)(r))^{\varphi=1}.$ Here $\phi$ acts coefficient-wise on $K[\Gamma_n]$
and $1^\mu$ is the same as the image of $\mu$ under the augmentation map $\O_K[\Gamma_n]\to \O_K.$

\begin{prop}\label{commutative}
\begin{enumerate}
\item For   $n\geq\max\{ 1,  a(\rho)\}$ and $W\neq \qp(1),$ the following diagram is commutative:
 \[\footnotesize\xymatrix@C=0.5cm{
  V_{\rho^*}\otimes_{\qp[G_n]}K_n\otimes M \ar[d]_{1\otimes\Sigma_{M,n}}  & V_{\rho^*}\otimes_{\qp[G_n]} K[\Gamma_n]\otimes M   \ar[l]_{1\otimes \Psi_{M,n}}\ar[r]^(0.44){1\otimes\Theta_{M,n}} &  V_{\rho^*}\otimes_{\qp[G_n]} V' \otimes_{\qp[G_n]} \Lambda'_{[V',\tau_p^{-1}]}\ar[d]^{\cong} \\
  V_{\rho^*}\otimes_{\qp[G_n]} D_{dR}(V')\cong D_{dR}(W)  & D_{dR}(W)\ar[l]_(0.3){\Phi_W}  & W\otimes_L L_{[W,\tau_p^{-1}]}\ar[l]_{(-1)^r\epsilon_{L,-\epsilon,dR}(W)^{-1}} ,  }\]
  where  \[\Phi_W:=\left\{
             \begin{array}{ll}
               \mathrm{id}_{D_{dR}(W)}, & \hbox{if $a(\rho)\neq 0;$} \\
              \frac{\det(1-\varphi|D_{cris}(W^*(1)))}{\det(1-\varphi|D_{cris}(W))}, & \hbox{otherwise.}
             \end{array}
           \right.
  \]
\item For $W\neq \qp(1)$ the diagram \eqref{descentdiag} commutes.
\end{enumerate}
\end{prop}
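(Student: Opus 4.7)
The proof is by explicit chase of a generic tensor around the diagram. I would handle both parts by direct computation, using the explicit formulas already established in the paper.

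For part (i), take an element $v \otimes \mu \otimes e_{\eta,r}$ in the top-middle object $V_{\rho^*} \otimes_{\qp[G_n]} K[\Gamma_n] \otimes M$. Along the path $1 \otimes \Psi_{M,n}$ followed by $1 \otimes \Sigma_{M,n}$, formula \eqref{PsiM} expands $\Psi_{M,n}(\mu \otimes e_{\eta,r})$ as the partial sum $p^{-n}\sum_{k=1}^n p^{kr}\eta(\tau_p)^k \epsilon_k^{\phi^{-k}(\mu)} \otimes e_{\eta,r}$ plus the Frobenius-invariant correction $p^{-n}(1-p^{-r}\eta(\tau_p)^{-1}\phi)^{-1}(1^\mu) \otimes e_{\eta,r}$, and then $\Sigma_{M,n}$ via \eqref{Sigma} multiplies by $\#G_n \cdot a \cdot t^{-r}$ after extracting the $e_{\rho^*}$-isotypic component. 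Along the path $1 \otimes \Theta_{M,n}$ followed by the right vertical isomorphism and $(-1)^r \epsilon_{L,-\epsilon,dR}(W)^{-1}$, formula \eqref{edR} multiplies by $(-1)^r (\rho\eta(\tau_p) p^{r-1})^n \tau(\rho,\epsilon_n) t^r$. Matching the scalars produced by the two paths is the heart of the computation.

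In the ramified case $a(\rho) = n$, only the $k = n$ term survives the $e_{\rho^*}$-projection, since $\sum_\gamma \rho(\gamma)\gamma\epsilon_k$ vanishes for $k < n$ by definition of the conductor, and the Frobenius-invariant correction is annihilated for the same reason; the surviving Gauss sum $\#G_n \cdot e_{\rho^*}\epsilon_n = \tau(\rho,\epsilon_n)$ combined with the scalar $p^{nr-n}\eta(\tau_p)^n = (\eta(\tau_p)p^{r-1})^n$ and the sign $(-1)^r$ from $(-t)^r$ in \eqref{edR} matches the right-hand side, with $\Phi_W = \mathrm{id}$ as required. In the unramified case $a(\rho) = 0$, the Gauss sum is trivial and the Frobenius-invariant correction dominates: $e_{\rho^*}$ turns $\phi$ into multiplication by $\rho\eta(\tau_p)^{-1}$, producing the scalar $(1 - p^{-r}\rho\eta(\tau_p)^{-1})^{-1} = \det(1 - \varphi \mid D_{cris}(W))^{-1}$, while the remaining geometric sum $\sum_{k=1}^n p^{kr-n}\eta(\tau_p)^k$ contribution, together with the trivial $\epsilon$-factor, collapses to give $\det(1 - \varphi \mid D_{cris}(W^*(1)))$ in the numerator, reproducing $\Phi_W$.

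Part (ii) then follows by pasting part (i) beneath the descent diagram \eqref{descentberger} for $r \geq 1$ (respectively \eqref{descentberger2} for $r \leq 0$) after tensoring with $V_{\rho^*} \otimes_{\Lambda'} -$ and routing $\Xi_{M,n}$ through $K[\Gamma_n] \otimes M$ via the factorisation \eqref{decentpsiM}. The sign $(-1)^{r-1}$ from \eqref{descentberger} combines with the $(-1)^r$ from part (i) to produce the overall $-1$ in front of $\mathcal{L}_W$ in \eqref{descentdiag}, and the factorial $(r-1)!$ matches $\Gamma(W) = (r-1)!$. The main obstacle is the Gauss-sum and sign bookkeeping in part (i): confirming that the $\epsilon$-factor normalisation \eqref{edR} exactly cancels the Gauss-sum contribution from $\Psi_{M,n}$ and $\Sigma_{M,n}$ in the ramified case, and that the Frobenius-invariant correction in the unramified case produces precisely the determinantal ratio $\Phi_W$ (which is the only delicate departure from a formal identification).
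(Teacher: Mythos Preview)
Your approach is correct and is essentially the one the paper takes: a direct chase of a basis element around the diagram, splitting into the cases $a(\rho)\geq 1$ and $a(\rho)=0$ via the standard Gauss-sum vanishing, and then pasting with \eqref{descentberger}/\eqref{decentpsiM} for part (ii).

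Two points where your sketch is thinner than the paper and worth tightening. First, the paper does not chase a generic tensor but fixes a normal basis $b$ of $\O_K$ and works with $1\otimes b\otimes e_{\eta,r}$; the map $\Theta_{M,n}$ then produces an $\bar H$-Gauss-like sum $\varsigma(\rho,b)=\sum_i \rho(\tau_p)^{-i}\phi^{-i}(b)$ which you have suppressed. It cancels between the two routes, so your argument is not wrong, but you have not actually computed the clockwise path through $\Theta_{M,n}$ and the right vertical isomorphism --- you jumped straight to \eqref{edR}. Second, for part (ii) the paper explicitly replaces $\epsilon$ by $-\epsilon=\epsilon^{-1}$ (this is what turns $\epsilon_{L,-\epsilon,dR}$ in part (i) into $\epsilon_{L,\epsilon,dR}$ in \eqref{descentdiag}, and $\Xi_{M,n}^\epsilon$ into the version compatible with $\mathcal{L}_{K,\epsilon^{-1}}$) and invokes Lemma \ref{twistL}; your sign bookkeeping ``$(-1)^{r-1}\cdot(-1)^r=-1$'' is on the right track but does not by itself account for this $\epsilon\leftrightarrow -\epsilon$ swap, which also affects the Gauss sum $\tau(\rho,\epsilon_n)$. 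In the unramified case the paper sets $n=1$ and combines the $k=1$ term (contributing $p^{r-1}\rho\eta(\tau_p)(1-p)^{-1}$) with the Frobenius correction to get the Euler-factor ratio; both terms matter, not just the correction.
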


\begin{proof}% Without loss of generality we assume that $p^n$ is the conductor of $\rho$ restricted
%to $\Gamma_n.$
Let $b$ denote a normal basis of $\O_K,$ i.e., $\O_K=\zp[\bar{H}]b$ with $\bar{H}=G(K/\qp),$ which
can be lifted from the residue field, $K$ being unramified, and $e:=e_{\eta,r}.$ Then $1\otimes
b\otimes e=1\otimes e_{\rho^*}b\otimes e$ is a basis of $V_{\rho^*}\otimes_{\qp[G_n]}
K[\Gamma_n]\otimes M $ as $L$-vector space (in general $\rho(g)$ does not lie in $K,$ but using
$V_{\rho^*}\otimes_{\qp[G_n]} K[\Gamma_n]\cong V_{\rho^*}\otimes_{L[G_n]} L[G_n] \otimes_{\qp[G_n]}
K[\Gamma_n]\cong V_{\rho^*}\otimes_{L[G_n]}(L\otimes_{\qp} K[\Gamma_n])$ one can make sense of it).
We calculate (going clockwise in the above diagram)
\begin{align*}
& 1\otimes\Theta_{M,n}(1\otimes b\otimes e) \\
&\qquad =  1\otimes (1\otimes t_{\eta,r})\otimes
\sum_{i=0}^{d_K-1} \tau_p^i\otimes \phi^{-i}(b)a\;\;\;\;\;\;(\subseteq V_{\rho^*}\otimes_{\qp[G_n]} V' \otimes_{\qp[G_n]} \Lambda'_{[V',\tau_p^{-1}]})\\
&\qquad =  t_{\rho\eta,r}\otimes\sum_{i=0}^{d_K-1} \rho(\tau_p)^{-i}\rho^*(\phi^{-i}(b))a \qquad\;\;\;(\subseteq W\otimes_L L_{[W,\tau_p^{-1}]})\\
&\qquad =  t_{\rho\eta,r}\otimes\sum_{i=0}^{d_K-1} \rho(\tau_p)^{-i}\phi^{-i}(b) a\\
&\qquad =  t_{\rho\eta,r}\otimes  \varsigma(\rho,b)a\\
\end{align*}
%\begin{eqnarray*}
%1\otimes\Theta_{M,n}(1\otimes b\otimes e) &=& 1\otimes (1\otimes t_{\eta,r})\otimes
%\sum_{i=0}^{d_K-1} \tau_p^i\otimes \phi^{-i}(b)a\;\;\;(\subseteq V_{\rho^*}\otimes_{\qp[G_n]} V' \otimes_{\qp[G_n]} \Lambda'_{[V',\tau_p^{-1}]})\\
%&=& t_{\rho\eta,r}\otimes\sum_{i=0}^{d_K-1} \rho(\tau_p)^{-i}\rho^*(\phi^{-i}(b))a \;\;\;(\subseteq W\otimes_L L_{[W,\tau_p^{-1}]})\\
%&=& t_{\rho\eta,r}\otimes\sum_{i=0}^{d_K-1} \rho(\tau_p)^{-i}\phi^{-i}(b) a\\
%&=& t_{\rho\eta,r}\otimes  \varsigma(\rho,b)a\\
%\end{eqnarray*}
with \[\varsigma(\rho,b):= \sum_{i=0}^{d_K-1} \rho(\tau_p)^{-i}\phi^{-i}(b)=d_K
e^{\bar{H}}_{\rho^*} b\] a Gauss-like sum, where
$e^{\bar{H}}_{\rho^*}=\frac{1}{\#\bar{H}}\sum_{h\in H} \rho(h)h.$ This element is sent by
$(-1)^r\epsilon_{L,-\epsilon,dR}(W)$ to \be\label{clock}(-1)^r\epsilon_L( D ,-\psi)^{-1}(-t)^{-r}
\varsigma(\rho,b)a\otimes t_{\rho\eta,r}=   p^{mr-m}(\rho\eta)(\tau_p^{ m}) \tau(\rho
,\epsilon_m)\varsigma(\rho,b)at^{-r}\otimes t_{\rho\eta,r}\ee in $D_{dR}(W),$ where we used
\eqref{edR} with $m=a(\rho)$.

Now we determine the image of $1\otimes  b\otimes e=1\otimes e_{\rho^*}b\otimes e$ anti-clockwise.
First note that the idempotent $e_{\rho^*}$ decomposes as $e^{\Gamma_n}_{\rho^*}\cdot e^{\bar{H}
}_{\rho^*}.$
%where $e^{\Gamma_n}_\rho:=\frac{1}{\#\Gamma_n}\sum_{\gamma\in\Gamma_n}
%\rho(\gamma)^{-1}\gamma$ and similarly for $e^{H_n}_\rho.$

Hence, for $n\geq a(\rho)\geq1, $ where $p^{a(\rho)}$ denotes the conductor of $\rho$ restricted to
$\Gamma_n,$ we have
\begin{eqnarray*}
(1\otimes \Psi_{M,n})(1\otimes b\otimes e)&=& 1\otimes e_{\rho^*}\Psi_{M,n}(  b\otimes
e)\\
&=&1\otimes p^{nr-n}\eta(\tau_p)^n \phi^{-n}(e^{\bar{H}}_{\rho^*} b) e_{\rho^*}^{\Gamma_n}\cdot\epsilon_n \otimes e\\
&=&1\otimes p^{nr-n}\eta(\tau_p)^n {\rho^*}(\tau_p^{-n})e^{\bar{H}}_{\rho^*} b e_{\rho^*}^{\Gamma_n}\cdot\epsilon_n\otimes e\\
&=&1\otimes \frac{p^{nr-n}}{\#G_n}(\rho \eta)(\tau_p^n) \varsigma(\rho,b)\tau(\rho ,\epsilon_n
)\otimes e,
\end{eqnarray*}

where we use  the explicit formula \eqref{PsiM} and the well-known fact about Gauss sums (see e.g.\
\cite[lem. 5.2]{bf4})
\[ e_\rho^{\Gamma_n}(\epsilon_k)=\left\{
                                 \begin{array}{ll}
                                   e_\rho^{\Gamma_n}(\epsilon_k), & \hbox{if $a(\rho)=k $; } \\
                                    (1-p)^{-1}, & \hbox{if $a(\rho)=0$ and $k= 1$;} \\
                                   0, & \hbox{otherwise,}
                                 \end{array}
                               \right.
\]
where we assume $k\leq n.$ Now from \eqref{Sigma}  we see that $\Sigma_{M,n}$ sends this element,
which already ``lies in the right eigenspace" to
\[  at^{-r}p^{nr-n}(\rho \eta)(\tau_p^n) \tau(\rho
,\epsilon_n )\varsigma(\rho,b)\otimes t_{\rho\eta,r}= p^{nr-n}(\rho \eta)(\tau_p^n) \tau(\rho
,\epsilon_n )\varsigma(\rho,b)at^{-r}\otimes t_{\rho\eta,r}, \] i.e., to the same element as in
\eqref{clock}, whence the result follows, if $a(\rho)\neq 0.$

Now assume that $a(\rho)=0,$ i.e., $\rho|\Gamma_n,$ the restriction to $\Gamma_n,$ is trivial. Setting $n=1$ we then have
\begin{eqnarray*}
(1\otimes \Psi_{M,1})(1\otimes b\otimes e)&=& 1\otimes \Psi_{M,1}( e_{\rho^*} b\otimes
e)\\
&=&1\otimes \left( p^{r-1}\eta(\tau_p)  \epsilon_1^{\phi^{-1}(e_{\rho^*} b )} + p^{ -1}(1-
p^{-r}\eta(\tau_p)^{-1}\phi)^{-1}(  e_{\rho^*}^{\bar{H}} b)\right)\otimes e\\
&=&1\otimes \left( p^{r-1}\eta(\tau_p) \phi^{-1}(e^{\bar{H}}_{\rho^*} b)
e_{\rho^*}^{\Gamma_1}\cdot\epsilon_1  + p^{ -1}(1-
p^{-r}\rho\eta(\tau_p)^{-1})^{-1}(  e_{\rho^*}^{\bar{H}} b)\right)\otimes e\\
&=&1\otimes \left( p^{r-1}\rho\eta(\tau_p)    (1-p)^{-1}  + p^{ -1}(1-
p^{-r}\rho\eta(\tau_p)^{-1})^{-1} \right) \frac{\varsigma(\rho,b)}{d_K}\otimes e\\
&=&1\otimes \left( \frac{1-p^{r-1}\rho\eta(\tau_p)}{ 1-p^{-r}\rho\eta(\tau_p^{-1})}\right)\frac{\varsigma(\rho,b)}{d_K(p-1)}\otimes e,\\
  \end{eqnarray*}
which is sent under $\Sigma_{M,1}$ to
\[\left( \frac{\det(1-\varphi|D_{cris}(W^*(1)))}{\det(1-\varphi|D_{cris}(W))}\right) \varsigma(\rho,b) at^{-r}\otimes t_{\rho\eta,r}, \]
 while \eqref{clock} becomes just
\[ \varsigma(\rho,b)  at^{-r}\otimes t_{\rho\eta,r} .\]
Upon replacing $\epsilon$ by $-\epsilon=\epsilon^{-1}$ (we have used both the additive and
multiplicative notation!) the second statement follows from the diagrams \eqref{descentberger},
\eqref{decentpsiM} and the one in (i) diagram combined with the isomorphism \eqref{DMiso} and Lemma
\eqref{twistL}.
\end{proof}

\section{Determinant functors}\label{determinants} In this appendix we recall some details of the formalism of determinant functors
 introduced by Fukaya and Kato in \cite{fukaya-kato}   (see also \cite{ven-BSD}).

We fix an associative unital noetherian ring $R$. We write ${ B}(R)$ for the category of bounded
complexes of (left) $R$-modules, ${ C}(R)$ for the  category of bounded complexes of finitely
generated (left) $R$-modules, $P(R)$ for the category of finitely generated projective (left)
$R$-modules, $ {C}^{\rm p}(R)$ for the category of bounded (cohomological) complexes of finitely
generated projective (left) $R$-modules. By $D^{\rm p}(R)$ we denote the category of perfect
complexes as full triangulated subcategory of the derived category $D^{\rm b}(R)$ of $B(R).$ We
write $(C^{\rm p}(R),{\rm quasi})$ and $(D^{\rm p}(R),{\rm is})$ for the subcategory of
quasi-isomorphisms of $C^{\rm p}(R)$ and isomorphisms of $D^{\rm p}(R),$ respectively.

For each complex $C = (C^\bullet,d_C^\bullet)$ and each integer $r$ we define the $r$-fold shift
$C[r]$ of $C$ by setting $C[r]^i= C^{i+r}$ and $d^i_{C[r]}=(-1)^rd^{i+r}_C$ for each integer $i$.

We first recall that there exists a Picard category $\C_R$ and a determinant functor $\,\d_R:(
{C}^{\rm p}(R),{\rm quasi})\to \C_R$ with the following properties (for objects $C,C'$ and $C''$ of
$\mathrm{C}^{\rm p}(R)$)

\begin{itemize}
\item[B.a)] $\C_R$ has an associative and commutative product
structure $(M,N) \mapsto M\cdot N$ (which we often write more simply as $MN$) with canonical unit
object ${\bf 1}_R = \d_R(0)$. If $P$ is any object of $P(R)$, then in $\C_R$ the object $\d_R(P)$
has a canonical inverse $\d_R(P)^{-1}$. Every object of $\C_R$ is of the form $\d_R(P)\cdot
\d_R(Q)^{-1}$ for suitable objects $P$ and $Q$ of $P(R)$.

\item[B.b)] All morphisms in $\C_R$ are isomorphisms and elements of
the form $\d_R(P)$ and $\d_R(Q)$ are isomorphic in $\C_R$ if and only if $P$ and $Q$ correspond to
the same element of the Grothendieck group $K_0(R)$. There is a natural identification ${\rm
Aut}_{\C_R}({\bf 1}_R) \cong K_1(R)$ and if ${\rm Mor}_{\C_R}(M,N)$ is non-empty, then it is a
$K_1(R)$-torsor where each element $\alpha$ of $K_1(R)\cong {\rm Aut}_{\C_R}({\bf 1}_R)$
 acts on $\phi \in {\rm Mor}_{\C_R}(M,N)$ to give $\alpha\phi: M =
{\bf 1}_R\cdot M \xrightarrow{\alpha\cdot \phi}{\bf 1}_R\cdot N = N$.

\item[B.c)] $\d_R$ preserves the product structure: specifically,
 for each $P$ and $Q$ in $P(R)$ one has $\d_R(P\oplus Q) =
\d_R(P)\cdot\d_R(Q)$.
\item[B.d)]%\footnote{The listing starts with d) to be compatible with the notation of  \cite{ven-BSD}.}
If $C'\to C\to C''$ is a short exact sequence of complexes,
then there is a canonical isomorphism $\,\d_R(C)\cong \d_R(C')\d_R(C'')$ in $\C_R$ (which we
usually take to be an identification). %These
% isomorphisms have the following functorial property: for any commutative diagram of short exact sequences of
%complexes in $C^{\rm p}(R)$ of the form
%%\begin{equation*}
%%\mbox{$\minCDarrowwidth1em
%%\begin{CD}
%% D' @> >> D @> >> D'' \\  @V VV    @V VV   @V VV    \\
%%C' @> >> C @> >> C'' \\ @V VV    @V VV   @V VV \\
%%E' @> >> E @> >> E''
%%\end{CD}$}
%%\end{equation*}
%%there is a commutative diagram in $\C_R$
%%%
%%\begin{equation*}
%%\begin{CD}
%%\d_R(C) @>\cong >> \d_R(C')\d_R(C'')\\
%%@VV\cong V @VV\cong V\\
%%\d_R(D)\d_R(E) @>\cong >> \d_R(D')\d_R(E')\d_R(D'')\d_R(E'')
%%\end{CD}
%%\end{equation*}
%in which the upper, lower, left and right arrow is the isomorphism induced by the central row, top
%and bottom rows, central column and left and right hand columns in the above diagram respectively.
%%
\item[B.e)] If $C$ is acyclic, then the quasi-isomorphism $0\to C$
induces a canonical isomorphism $\,\u_R\to\d_R(C).$
\item[B.f)] For any integer $r$ one has
$\d_R(C[r])=\d_R(C)^{(-1)^r}$.
\item[B.g)] the functor $\d_R$ factorises over the image of
$C^{\rm p}(R)$ in $D^{\rm p}(R)$ and extends (uniquely up to unique isomorphisms) to $(D^{\rm
p}(R),{\rm is}).$ Moreover, if $R$ is regular, also property B.d) extends to all distinguished triangles.
\item[B.h)] For each $C$ in $D^{\rm b}(R)$ we write $\H(C)$ for the
 complex which has $\H(C)^i = H^i(C)$ in each degree $i$ and in which all differentials are $0$. If
 $\H(C)$ belongs to $D^{\rm p}(R)$ (in which case one says that $C$ is {\em cohomologically perfect}), then $C$ belongs to $D^{\rm p}(R)$ and
 there are canonical
 isomorphisms
\[\d_R(C) \cong \d_R(\H(C)) \cong \prod_{i\in \Z} \d_R(H^i(C))^{(-1)^i}.\]
(For an explicit description of the first isomorphism see \cite[\S 3]{km} or \cite[Rem.\
3.2]{Breu-Burns}.)
\item[B.i)] If $R'$ is another (associative unital noetherian) ring and $Y$ an $(R',R)$-bimodule
 that is both finitely generated and projective as an $R'$-module,
 then the functor $Y\otimes_R-: P(R)\to P(R')$ extends to a
commutative diagram
\[\xymatrix{
  (\mathrm{D}^p(R), is) \ar[d]_{Y\otimes_R^\mathbb{L}-} \ar[rr]^{\d_R} & & {\C_R} \ar[d]^{Y\otimes_R-} \\
  (\mathrm{D}^p(R'), is) \ar[rr]^{\d_{R'}} & & {\C_{R'}}   }\]
In particular, if $R\to R'$ is a ring homomorphism and $C$ is in $\mathrm{D}^{\rm p}(R),$ then we
often simply write $\d_R(C)_{R'}$ in place of $R'\otimes_R\d_R(C).$

\item[B.j)]\label{Aj} Let $R^\circ$ be the opposite ring of $R.$ Then the functor $\mathrm{Hom}_R(-,R)$ induces an
anti-equivalence between $\C_R$ and $\C_{R^\circ}$ with quasi-inverse induced  by \linebreak
$\mathrm{Hom}_{R^\circ}(-,R^\circ);$ both functors will be denoted by $-^*.$ This extends to give a
diagram
 \[\xymatrix{
   (\mathrm{D}^p(R), is) \ar[d]_{\mathrm{RHom}_R(-,R)} \ar[rr]^{\d_R}& & {\C_R }\ar[d]^{-^*} \\
   (\mathrm{D}^p(R^\circ),is) \ar[rr]^{\d_{R^\circ}} & & {\C_{R^\circ} } } \]
   which commutes (up to unique isomorphism);   similarly  we have such a commutative diagram for $\mathrm{RHom}_{R^\circ}(-,{R^\circ}).$
\end{itemize}

For the handling of the determinant functor in practice the following considerations are quite
important:
\begin{rem}\label{inverse}
(i) For objects $A,B \in \mathcal{C}_R$ we often identify a morphism $f:A\to B$ with the induced
morphism
\[\xymatrix{
  { \u_R}\ar@^{=}[r]&{A\cdot A^{-1}}\ar[rr]^{f\cdot \id_{A^{-1}}} & &     {B\cdot A^{-1}}.   }
\] Then for morphisms $f:A\to B$ and $g:B\to C$ in $\mathcal{C}_R,$
the composition $ g\circ f: A\to C$  is identified with the product $g\cdot f: \u_R\to C\cdot
A^{-1}$ of $g:\u_R\to C\cdot B^{-1}$ and $f:\u_R\to B\cdot A^{-1}.$ Also, by this identification a
map $f:A\to A$ corresponds uniquely to an element in $K_1(R)=\mathrm{Aut}_{\mathcal{C}_R}(\u_R).$
Furthermore, for a map $f:A\to B$ in $\mathcal{C}_R,$ we write $\overline{f}:B\to A$ for its
inverse with respect to composition, while $f^{-1}=:\overline{\id_{B^{-1}}\cdot f\cdot
\id_{A^{-1}}}:A^{-1}\to B^{-1}$ for its inverse with respect to the multiplication in
$\mathcal{C}_R,$ i.e.\ $f\cdot f^{-1}=\id_{\u_R}.$ Obviously, for a map $f:A\to A$ both inverses
$\overline{f}$ and $f^{-1}$ coincide if all maps are considered as elements of $K_1(R)$ as above.

%We have to distinguish at least two inverses of a map
%$\phi:\d_R(C)\to \d_R(D)$ with  $C,D\in \mathrm{C}^p(R).$ The
%inverse with respect to composition will be denoted by
%$\overline{\phi}:\d_R(D)\to\d_R(C).$ But due to the product
%structure in $\C_R$ and the identification
%$\d_R(C)\cdot\d_R(C)^{-1}=\u_R$ the knowledge of $\phi$ is
%equivalent to that of
%\[\xymatrix{
%  { \u_R}\ar@^{=}[r]&{\d_R(C)\cdot\d_R(C)^{-1}}\ar[rr]^{\phi\cdot \id_{\d_R(C)^{-1}}} & &     {\d_R(D)\cdot \d_R(C)^{-1}}   }
%\] or even \[\phi^{-1}:\d_R(C)^{-1}\to \d_R(D)^{-1}\] which is by
%definition $\overline{\id_{\d_R(D)^{-1}}\cdot\phi\cdot
%\id_{\d_R(C)^{-1}}}$ or in other words
%$\phi\cdot\phi^{-1}=\id_{\u_R}. $ In particular, $\phi:\d_R(C)\to
%\d_R(C)$ corresponds uniquely to $\phi\cdot
%\id_{\d_R(C)^{-1}}:\u_R\to \u_R.$ Thus it can and will be
%considered as an element in $K_1(R).$ Note that under this
%identification the elements in $K_1(R)$ assigned to each of
%$\phi^{-1}$ and $\overline{\phi}$ is the inverse to the element
%assigned to $\phi.$ Furthermore, the following relation between
%$\circ$ and $\cdot$ is easily verified: Let $\xymatrix{
%  A \ar[r]^{\phi} & B   }$ and $\xymatrix{
%    B \ar[r]^{\psi} & C   }$ be morphisms in $\C_R.$ Then the composite $\psi\circ\phi$ is the same as  the product $\psi\cdot\phi\cdot\id_{B^{-1}}.$\\
{\bf Convention:} If $f:\u\to A$ is a morphism and $B$ an object in $\C_R,$ then we write
$\xymatrix{
  B \ar[r]^{\cdot\;f} & B\cdot A   }$ for the morphism $\id_B\cdot f.$ In particular, any morphism $\xymatrix{
  B \ar[r]^{f} & A   }$ can be written as $\xymatrix{
  B \ar[rr]^{\cdot\;(\id_{B^{-1}}\cdot\;f)} &&   A   }.$\\
(ii) The determinant of the complex $C=[P_0\stackrel{\phi }{\to} P_1]$ (in degree $0$ and $1$) with
$P_0=P_1=P$ is by definition $\xymatrix@C=0.5cm{
  { \d_R(C)}\ar@{=}[r]^<(0.3){def} & {\u_R}   }$ and is defined even if $\phi$ is not an
isomorphism (in contrast to $\d_R(\phi)$). But if $\phi$ happens to be an isomorphism, i.e.\ if $C$
is acyclic, then by e) there is also a canonical map $\xymatrix@C=0.5cm{
  {  \u_R}\ar[r]^<(0.3){acyc} & {\d_R(C)}   },$ which is in fact  nothing else then
\[\xymatrix@C=0.5cm{ {\u_R}\ar@{=}[r] & {\d_R(P_1)\d_R(P_1)^{-1}
}\ar[rrr]^{\d(\phi)^{-1}\cdot \id_{\d(P_1)^{-1}}} &&& {\d_R(P_0)\d_R(P_1)^{-1}}   \ar@{=}[r] &
{\d_R(C)} }\] (and  which depends in contrast to the first identification on $\phi$). Hence, the
composite $\xymatrix@C=0.5cm{
  {  \u_R}\ar[r]^<(0.3){acyc} & {\d_R(C)} \ar@{=}[r]^<(0.4){def} & {\u_R}
  }$ corresponds to $\d_R(\phi)^{-1}\in K_1(R)$ according to the first
remark.
 In order to distinguish the above identifications between $\u_R$ and $\d_R(C)$     we also say that $C$ is {\em trivialized by
the identity, } when we refer to $\xymatrix@C=0.5cm{
  { \d_R(C)}\ar@{=}[r]^<(0.4){def} & {\u_R}   }$ (or its inverse with
  respect to composition). For $\phi=\id_P$ both identifications
   agree obviously.
 \end{rem}

We end this section by considering the example where $R=K$ is a field and $V$ a finite dimensional
vector space over $K.$ Then, according to \cite[1.2.4]{fukaya-kato}, $\d_K(V)$ can be identified
with the highest exterior product $\bigwedge^{top}V$ of $V$ and for an automorphism $\phi: V\to V$
the determinant $\d_K(\phi)\in K^\times=K_1(K)$ can be identified with the usual determinant
$\det_K(\phi).$ In particular, we identify $\d_K=K$ with canonical basis $1.$ Then a map
$\xymatrix@C=0.5cm{ {\u_K} \ar[r]^{\psi} & {\u_K}  }$ corresponds uniquely to the value $\psi(1)\in
K^\times.$

\begin{rem}\label{finitemodules}
Note that every {\em finite} $\zp$-module $A$ possesses a free resolution $C$, i.e.\ $\d_{\zp}(A)\cong\d_{\zp}(C)^{-1}=\u_{\zp}.$ Then modulo $\z_p^\times$
the composite $\xymatrix@C=0.5cm{
  {  \u_{\qp}}\ar[r]^<(0.2){acyc} & {\d_{\zp}(C)_{\qp}} \ar@{=}[r]^<(0.4){def} & {\u_{\qp}}
  }$ corresponds to the cardinality $|A|^{-1}\in\Q_p^\times.$
\end{rem}

\include{cmepsfinal.bbl} % For Gather Purpose Only

% ----------------------------------------------------------------
\bibliographystyle{amsplain}
%\bibliography{../bib/Xbib}

\begin{thebibliography}{10}

\bibitem{benois-berger}
D.\ Benois and L.\ Berger, \emph{Th\'eorie d'{I}wasawa des repr\'esentations cristallines.
              {II}}, Comment. Math. Helv.   \textbf{83} (2008), no.~3, 603--677.

\bibitem{Benois-NQD}
D.~Benois and T.~Nguyen Quang~Do, \emph{Les nombres de {T}amagawa locaux et la
  conjecture de {B}loch et {K}ato pour les motifs {$\Bbb Q(m)$} sur un corps
  ab\'elien}, Ann. Sci. \'Ecole Norm. Sup. (4) \textbf{35} (2002), no.~5,
  641--672.

\bibitem{berger02}
L.~Berger, \emph{Repr\'esentations {$p$}-adiques et \'equations
  diff\'erentielles}, Invent. Math. \textbf{148} (2002), no.~2, 219--284.

\bibitem{berger-exp}
\bysame, \emph{Bloch and {K}ato's exponential map: three explicit formulas},
  Doc. Math. (2003), no.~Extra Vol., 99--129 (electronic), Kazuya Kato's
  fiftieth birthday.

\bibitem{berger-limits}
\bysame, \emph{Limites de repr\'esentations cristallines}, Compos. Math.
  \textbf{140} (2004), no.~6, 1473--1498.

\bibitem{ber} D. ~Bertrand,
\newblock {Valuers de fonctions theta et hauteur p-adiques},
\newblock in {\it S\'eminaire de Th\'eorie des Nombres, Paris,
1980--81}, Progress in Math. {\bf 22}, Birkh\"{a}user, 1982.

\bibitem{bou-ven}
A.~Bouganis and O.~Venjakob, \emph{{On the non-commutative Main Conjecture for
  Elliptic Curves with Complex Multiplication}}, Asian J. Math. \textbf{14}
  (2010), no.~3, 385--416.

\bibitem{Breu-Burns}
M.\ Breuning and D.\ Burns, \emph{Additivity of {E}uler characteristics in
  relative algebraic {$K$}-groups}, Homology, Homotopy Appl. \textbf{7} (2005),
  no.~3, 11--36.
\bibitem{bf}
D.~Burns and M.~Flach, \emph{Tamagawa numbers for motives with (non-commutative)
  coefficients}, Doc. Math. \textbf{6} (2001), 501--570.


\bibitem{bf4}
\bysame, \emph{{On the equivariant Tamagawa number conjecture for
  Tate motives, part II}}, Doc. Math. \textbf{Extra Vol.} (2006), 133--163,
  John H. Coates' Sixtieth Birthday.

\bibitem{burns-ven1}
D.~Burns and O.~Venjakob, \emph{{On the leading terms of zeta isomorphisms and
  p-adic L-functions in non-commutative Iwasawa theory}}, Doc. Math.
  \textbf{Extra Vol.} (2006), 165--209, John H. Coates' Sixtieth Birthday.

\bibitem{burns-ven2}
\bysame, \emph{On descent theory and main conjectures in non-commutative
  {I}wasawa theory}, J. Inst. Math. Jussieu \textbf{10} (2011), no.~1, 59--118.

\bibitem{cher-col1999}
F.~Cherbonnier and P.~Colmez, \emph{Th\'eorie d'{I}wasawa des repr\'esentations
  {$p$}-adiques d'un corps local}, J. Amer. Math. Soc. \textbf{12} (1999),
  no.~1, 241--268.

\bibitem{cfksv}
J.~Coates, T.~Fukaya, K.~Kato, R.~Sujatha, and O.~Venjakob, \emph{{The $GL_2$
  main conjecture for elliptic curves without complex multiplication}}, Publ.\
  Math.\ IHES. \textbf{101} (2005), no.~1, 163 -- 208.

\bibitem{co-su-buch-MC}
J.~Coates and R.~Sujatha, \emph{Cyclotomic fields and zeta values}, Springer
  Monographs in Mathematics, Springer-Verlag, Berlin, 2006.

\bibitem{col1979}
R.~F. Coleman, \emph{Division values in local fields}, Invent. Math.
  \textbf{53} (1979), no.~2, 91--116.

\bibitem{coleman83}
\bysame, \emph{Local units modulo circular units}, Proc. Amer. Math. Soc.
  \textbf{89} (1983), no.~1, 1--7.

\bibitem{colmez-tsinghua}
P.~Colmez, \emph{Fontaine's rings and $p$-adic $l$-functions}, Notes d'un cours
  donn{\'e} {\`a} l'universit{\'e} de Tsinghua en octobre-d{\'e}cembre 2004.

\bibitem{deSh}
E.~de~Shalit, \emph{Iwasawa theory of elliptic curves with complex
  multiplication}, Perspectives in mathematics, vol.~3, Academic Press, 1987.

\bibitem{del-localconstant}
P.~Deligne, \emph{Les constantes des \'equations fonctionnelles des fonctions
  {$L$}}, Modular functions of one variable, {II} ({P}roc. {I}nternat. {S}ummer
  {S}chool, {U}niv. {A}ntwerp, {A}ntwerp, 1972), Springer, Berlin, 1973,
  pp.~501--597. Lecture Notes in Math., Vol. 349.

\bibitem{flach-survey}
M.~Flach, \emph{The equivariant {T}amagawa number conjecture: a survey},
  Stark's conjectures: recent work and new directions, Contemp. Math., vol.
  358, Amer. Math. Soc., Providence, RI, 2004, With an appendix by C.\
  Greither, pp.~79--125.

 \bibitem{flach-survey2}
\bysame, \emph{Iwasawa theory and motivic {$L$}-functions},
  Pure Appl. Math. Q. \textbf{5}, no.~1,
    (2009), 255--294.

\bibitem{fukaya-kato}
T.~Fukaya and K.~Kato, \emph{{A formulation of conjectures on $p$-adic zeta
  functions in non-commutative Iwasawa theory}}, Proceedings of the St.\
  Petersburg Mathematical Society, Vol. XII (Providence, RI), Amer. Math. Soc.
  Transl. Ser. 2, vol. 219, Amer. Math. Soc., 2006, pp.~1--86.

\bibitem{hida93}
H.~Hida, \emph{Elementary theory of {$L$}-functions and {E}isenstein series},
  London Mathematical Society Student Texts, vol.~26, Cambridge University
  Press, Cambridge, 1993.

\bibitem{iz}
D.~Izychev, \emph{{Equivariant $\epsilon$-conjecture for crystalline
  representations}}, PhD thesis, in preparation (2011).

\bibitem{kato-lnm}
K.~Kato, \emph{Lectures on the approach to {I}wasawa theory for {H}asse-{W}eil
  {$L$}-functions via {$B\sb {\rm dR}$}. {I}}, Arithmetic algebraic geometry
  (Trento, 1991), Lecture Notes in Math., vol. 1553, Springer, Berlin, 1993,
  pp.~50--163.

\bibitem{kato-lnmII}
\bysame, \emph{Lectures on the approach to {I}wasawa theory for {H}asse-{W}eil
  {$L$}-functions via {$B\sb {\rm dR}$}. {II}}, preprint (1993).

\bibitem{km}
F.~F. Knudsen and D.~Mumford, \emph{The projectivity of the moduli space of
  stable curves. {I}. {P}reliminaries on ``det'' and ``{D}iv''}, Math. Scand.
  \textbf{39} (1976), no.~1, 19--55.

 \bibitem{loefflerzerbesven}
D.\ Loeffler, O.\ Venjakob and S.\ Zerbes, \emph{{Local epsilon isomorphisms}}, preprint (2013).

\bibitem{zerbes-loeffler}
D.\ Loeffler and S.\ Zerbes, \emph{{Iwasawa theory and $p$-adic $L$-functions
  over $\mathbb{Z}_p^2$-extensions}}, arXiv:1108.5954 (2011).

\bibitem{nek}
J.~Nekov\'a\v r,
\newblock Selmer complexes,
\newblock   Ast\'erisque \textbf{310} (2006).

\bibitem{perrin-height} B.~Perrin-Riou,
\newblock {Th\'eorie d'{I}wasawa et hauteurs {$p$}-adiques},
\newblock  Invent. Math. \textbf{109} (1992), no.\ 1, 137--185.


\bibitem{perrin94}
\bysame, \emph{Th\'eorie d'{I}wasawa des repr\'esentations {$p$}-adiques
  sur un corps local}, Invent. Math. \textbf{115} (1994), no.~1, 81--161, With
  an appendix by Jean-Marc Fontaine.

\bibitem{perrin99}
 \emph{Th\'eorie d'{I}wasawa et loi explicite de r\'eciprocit\'e}, Doc.
  Math. \textbf{4} (1999), 219--273 (electronic).

\bibitem{schmitt}
U.~Schmitt, \emph{{Functorial Properties related to the Main Conjectures of non-commutative Iwasawa Theory}}, PhD thesis, in preparation (2013).

\bibitem{schneider82}
P.~Schneider,
\newblock $p$-adic height pairings. I.,
\newblock Invent. Math. \textbf{69}
  (1982), 401--409.

\bibitem{serre}
J.-P. Serre, \emph{{Abelian l-adic representations and elliptic curves}}, W. A.
  Benjamin, 1968.

 \bibitem{Tate}
J.~Tate, \emph{Number Theoretic Background}, Proceedings of Symposia
in Pure Mathematics,   \textbf{33} (1979).

\bibitem{ven-BSD}
O.~Venjakob, \emph{From the {B}irch and {S}winnerton-{D}yer conjecture to
  non-commutative {I}wasawa theory via the equivariant {T}amagawa number
  conjecture---a survey}, {$L$}-functions and {G}alois representations, London
  Math. Soc. Lecture Note Ser., vol. 320, Cambridge Univ. Press, Cambridge,
  2007, pp.~333--380.

\bibitem{ven-det}
\bysame, \emph{{A note on Determinant functors and Spectral Sequences}},
  {preprint}   ({2012}).

\bibitem{wuth} C.~Wuthrich,
\newblock {On $p$-adic heights in families of elliptic curves},
\newblock  J. London Math. Soc. (2) {\bf 70} (2004), no.~1 , 23--40.

\bibitem{Ya}
R.~I. Yager, \emph{{On two variable p-adic L-functions.}}, Ann. Math., II. Ser.
  \textbf{115} (1982), 411--449.

\bibitem{yasuda}
S.~Yasuda, \emph{Local constants in torsion rings}, J. Math. Sci. Univ. Tokyo \textbf{16} (2009), no.~2, 125--197.





\end{thebibliography}

\def\Dbar{\leavevmode\lower.6ex\hbox to 0pt{\hskip-.23ex \accent"16\hss}D}
  \def\cfac#1{\ifmmode\setbox7\hbox{$\accent"5E#1$}\else
  \setbox7\hbox{\accent"5E#1}\penalty 10000\relax\fi\raise 1\ht7
  \hbox{\lower1.15ex\hbox to 1\wd7{\hss\accent"13\hss}}\penalty 10000
  \hskip-1\wd7\penalty 10000\box7}
  \def\cftil#1{\ifmmode\setbox7\hbox{$\accent"5E#1$}\else
  \setbox7\hbox{\accent"5E#1}\penalty 10000\relax\fi\raise 1\ht7
  \hbox{\lower1.15ex\hbox to 1\wd7{\hss\accent"7E\hss}}\penalty 10000
  \hskip-1\wd7\penalty 10000\box7} \def\Dbar{\leavevmode\lower.6ex\hbox to
  0pt{\hskip-.23ex \accent"16\hss}D}
  \def\cfac#1{\ifmmode\setbox7\hbox{$\accent"5E#1$}\else
  \setbox7\hbox{\accent"5E#1}\penalty 10000\relax\fi\raise 1\ht7
  \hbox{\lower1.15ex\hbox to 1\wd7{\hss\accent"13\hss}}\penalty 10000
  \hskip-1\wd7\penalty 10000\box7}
  \def\cftil#1{\ifmmode\setbox7\hbox{$\accent"5E#1$}\else
  \setbox7\hbox{\accent"5E#1}\penalty 10000\relax\fi\raise 1\ht7
  \hbox{\lower1.15ex\hbox to 1\wd7{\hss\accent"7E\hss}}\penalty 10000
  \hskip-1\wd7\penalty 10000\box7}
\providecommand{\bysame}{\leavevmode\hbox to3em{\hrulefill}\thinspace}
\providecommand{\MR}{\relax\ifhmode\unskip\space\fi MR }
% \MRhref is called by the amsart/book/proc definition of \MR.
\providecommand{\MRhref}[2]{%
  \href{http://www.ams.org/mathscinet-getitem?mr=#1}{#2}
}
\providecommand{\href}[2]{#2}

\end{document}